\newtheorem{theorem}{Theorem}[section]
\newtheorem{lemma}[theorem]{Lemma}
\newtheorem{proposition}[theorem]{Proposition}
\newtheorem{definition}[theorem]{Definition}
\newtheorem{remark}[theorem]{Remarks}
\newtheorem{rk&ex}[theorem]{Remarks \& Examples}
\newtheorem{corollary}[theorem]{Corollary}
\def\NN{{\mathbb N}}
\def\N{{\mathbb N}}
\def\ZZ{{\mathbb Z}}
\def\Z{{\mathbb Z}}
\def\RR{{\mathbb R}}
\def\R{{\mathbb R}}
\def\e{\varepsilon}
\begin{document}
	\title{Homogenization of oblique boundary value problems}
	\date{\vspace{-5ex}}
	
		\author{Sunhi Choi \thanks{Department of Mathematics, U. of Arizona, Tucson, AZ. }  and Inwon C. Kim
		\thanks{Department of Mathematics, UCLA, LA CA. Research supported by NSF DMS-1566578 }}
	\maketitle

	\begin{abstract}
		We consider a nonlinear Neumann problem, with periodic oscillation in the elliptic operator and on the boundary condition. Our focus is on problems posed in half-spaces, but with general normal directions that may not be parallel to the directions of periodicity. As the frequency of the oscillation grows, quantitative homogenization results are derived. When the homogenized operator is rotation-invariant, we prove the H\"{o}lder continuity of the homogenized boundary data. While we follow the outline of \cite{CK}, new challenges arise due to the presence of tangential derivatives on the boundary condition in our problem. In addition we improve and optimize the rate of convergence within our approach. Our result appear to be new even for the linear oblique problem.
		
	\end{abstract}

	\section{Introduction}

	For given $\e>0$, $\nu\in\mathcal{S}^{n-1}$ and  $\tau\in\R^n$, let $u_\e$ be a bounded solution of the following problem:
	$$
	\left\{\begin{array}{lll}
	F(D^2u_\e, \frac{x}{\e}) = 0 &\hbox{ in } \quad \Pi := \{x \in\R^n: -1< (x-\tau)\cdot\nu  <0\}  \\ \\
	u_\e =h(x)  &\hbox{ on } \quad  H_{-1}:= \{(x-\tau)\cdot\nu =-1\}\\ \\
	\partial_{\nu} u =   G(Du_\e, \frac{x}{\e}) &\hbox{ on } \quad  H_0:=\{(x-\tau)\cdot\nu = 0\}. 
	\end{array}\right.\leqno(P)_\e
	$$
	Here $F(M, y)$ and $G(p,y)$ are $\Z^n$-periodic in the $y$ variable. We also assume the boundary condition to be {\it oblique} and $F$ to be uniformly elliptic: see Section 1.1 for precise assumptions on $F$ and $G$. 
	
	\medskip
	
	The examples of boundary conditions we consider include the linear oblique problem
	\begin{equation}\label{oblique}
	\vec{\gamma}(\frac{x}{\e})\cdot Du +g(\frac{x}{\e})= 0,
	\end{equation}
	where the vector field $\vec{\gamma}$ satisfies  $c(\frac{x}{\e},x):=\vec{\gamma}(\frac{x}{\e})\cdot \nu >0$.  In this case one can write 
	$$G(p,y) = (c(y))^{-1}[\vec{\gamma}(y) \cdot p_T+ g(y)],
	$$ where $p_T$ is the tangential component of a vector field $p$ on $H_0$.  A nonlinear example is {\it capillarity-type} conditions, for which $G$ is given by
	\begin{equation}\label{nonlinear}
	G(p,y)= \theta(y) \sqrt{1+|p|^2},
	\end{equation}
	where $|\theta(x)|< 1$. This condition describes a prescribed contact angle between the graph $\Gamma:= \{(x,z): z=u(x)\}$ and the ``container boundary"  $H_0 \times \R$ with chemical inhomogeneities.

	\medskip
	
	We are interested in the behavior of $u_\e$ as $\e$ tends to zero.  Note that, as first pointed out by Bensoussan, Lions, and Papanicolaou \cite{BLP}, if $\nu$ is a multiple of a vector in $\Z^n$ (i.e. if $\nu$ is rational) then $\tau\cdot \nu$ must be zero for $u^\e$ to converge, since otherwise the Neumann boundary condition changes drastically as $\e$ changes, and thus $u_\e$ would not have a limit. When $\nu$ is irrational we expect $u_\e$ to average due to the ergodic property of its Neumann data. However in this case  $u^\e$ is no longer periodic, and thus interesting challenges arise in dealing with the inherent lack of compactness. Compared to \cite{CK} where linear Neumann problem was considered,   there is an additional challenge in our setting given by the presence of tangential derivatives on the boundary condition. We will discuss below some of the relevant literature on this issue.
	
	\medskip
	
	Let us state a convergence result on $(P)_\e$ to begin the discussion. let $\bar{F}$ be the homogenized operator of $F$ obtained by Evans \cite{evans}.
	
	\begin{theorem}\label{thm:main}
		Let $\nu$ be irratonal, or otherwise suppose $\tau=0$. Let us assume $(F1)-(F3)$ and $(G1)-(G3)$ (see Section 1.1). In addition suppose that $F(\cdot,x)$ is convex when $G(\cdot,x)$ is nonlinear. Then there exists $\mu(\nu, D_Tu)$, independent of $\tau$, such that $u_\e$  converges uniformly to the unique bounded solution $\bar{u}$ of 
		$$
		\left\{\begin{array}{lll}
		\bar{F}(D^2 \bar{u}) = 0 &\hbox{ in } &\Pi\;\ \\ \\
		\bar{u} = h(x)& \hbox{ on }& H_{-1}\\  \\
		\partial_\nu \bar{u}  = \mu(\nu, D_T\bar{u}) &\hbox{ on } &H_0.
		\end{array}\right. \leqno(\bar{P})
		$$
		Moreover $\mu(\nu,q)$  is Lipschitz continuous with respect to $q$. If $\bar{F}(M)$ is rotation-invariant, then $\mu$ is also H\"{o}lder continuous over irrational directions $\nu$ with exponent $\alpha = \frac{1}{5n}$. 
	\end{theorem}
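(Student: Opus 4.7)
The overall strategy is the one suggested by the authors themselves: follow the framework of \cite{CK} for the linear Neumann problem, but with the extra difficulty that $G(p,y)$ now depends on the tangential part of $p$, so the effective boundary data $\mu$ must depend on $D_T\bar u$ and the cell problem for $\mu$ must be solved with a prescribed tangential slope. The first step is to \emph{define} $\mu(\nu,q)$ for each irrational $\nu\in\mathcal S^{n-1}$ and each tangential vector $q\in\nu^\perp$. For this I would fix $\nu$ and solve, in a half-space (or thick slab) $\Pi_\infty:=\{y\cdot\nu<0\}$, the cell problem
\[
F(D^2 w,y)=0 \text{ in }\Pi_\infty,\qquad \partial_\nu w=G(q+Dw,y)\text{ on }\{y\cdot\nu=0\},
\]
searching for the unique $\mu\in\R$ for which there exists a viscosity solution with $|w(y)-q\cdot y-\mu\,(y\cdot\nu)|$ bounded. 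Existence of such $\mu$ comes from a sub/supersolution sandwich using linear barriers of the form $q\cdot y+M(y\cdot\nu)$ together with the obliqueness and uniform ellipticity; independence of $\tau$ is built in once $\nu$ is irrational, since the ergodicity of the $\Z^n$-action restricted to $\nu^\perp$ identifies the effective slope across all translations $\tau$.

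With $\mu(\nu,\cdot)$ at hand, the convergence part of the theorem follows by the perturbed test function method of Evans. For the interior I use Evans' corrector for $\bar F$. At a boundary touching point $x_0\in H_0$ where $\bar u-\phi$ attains a local max, I would correct $\phi$ by $\e\,w(x/\e;\,\nu,D_T\phi(x_0))$, the rescaled cell corrector of Step 1. The interior PDE is handled by the standard argument; the new content is that, because $w$ grows linearly with slope $\mu(\nu,D_T\phi(x_0))$ in the $\nu$ direction, the boundary inequality produced in the viscosity limit is precisely $\partial_\nu\phi(x_0)\le\mu(\nu,D_T\phi(x_0))$. In the convex/nonlinear case the convexity of $F$ is needed exactly where it is needed in \cite{evans}, to control the size of Evans' interior corrector. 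Uniqueness of $\bar u$ is standard.

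Lipschitz continuity of $\mu$ in $q$ should be cheap. If $w_1,w_2$ are correctors corresponding to tangential slopes $q_1,q_2$, then $w_1-w_2-(q_1-q_2)\cdot y$ is a sub/supersolution of a linearized oblique problem with oblique Neumann data of size $O(|q_1-q_2|)$ (using the Lipschitz assumption (G2) on $G$); the comparison principle on the slab together with the ABP-type estimate forces the affine-in-$\nu$ part of the difference to have slope $O(|q_1-q_2|)$, i.e.\ $|\mu(\nu,q_1)-\mu(\nu,q_2)|\le C|q_1-q_2|$.

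The hardest part, and the one I would budget the most care for, is H\"older continuity in $\nu$ with the explicit exponent $\alpha=\frac1{5n}$ under rotation invariance of $\bar F$. The plan is: given irrational $\nu,\nu'$ close, use a rotation $R$ with $R\nu=\nu'$ and $\|R-\Id\|\lesssim|\nu-\nu'|$ to pull back the corrector $w'$ for $\nu'$ to a function $\tilde w(y):=w'(Ry)$ on the half-space for $\nu$; rotation invariance of $\bar F$ (and thus an averaged/interior invariance of $F$) means $\tilde w$ almost solves the $\nu$ cell problem, with errors on the boundary of size $|\nu-\nu'|$ in the coefficients of $G$. Then compare $\tilde w$ to $w$ on a thick slab of height $L$ via the quantitative ergodic/averaging estimate that powers \cite{CK}: because $\nu$ is irrational, tangential translates of $G(\cdot,y/\e)$ average over scales $\ell$ with rate controlled by a Diophantine modulus of $\nu$. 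The exponent $\frac1{5n}$ should emerge by optimizing the three competing scales that appear in this argument: the rotation error $|\nu-\nu'|$, the slab thickness $L$ needed to read off the affine slope $\mu$, and the tangential averaging scale $\ell$ (together with the Diophantine denominator $\lesssim|\nu-\nu'|^{-1/(n-1)}$). Making this optimization quantitative, with the tangential gradient dependence of $G$ under control throughout, is where the main work of the proof will lie.
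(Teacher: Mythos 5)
Your first two ingredients (defining $\mu(\nu,q)$ by a cell problem with prescribed tangential slope $q$, then running the perturbed test function method and a barrier argument for Lipschitz-in-$q$) line up with what the paper does, modulo a technical point: the paper works in the fixed-height strip $\Pi(\tau,\nu)=\{-1\le(x-\tau)\cdot\nu\le0\}$ with Dirichlet data $q\cdot x$ on the bottom face rather than in a full half-space, which makes the existence and uniqueness of the approximate cell solution unproblematic. Also, at the boundary touching point the paper does not add a corrector $\e\,w(x/\e)$ to the test function; it rescales $v_\e(x):=(r\delta)^{-1}u_\e(r\delta x)-l_\delta(x)$ and compares directly with the strip cell solution $w_{\bar\e}$ via the localization lemma. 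This sidesteps the need to control a global corrector's gradient, which your version would still have to justify because $G$ sees $D_T u_\e$.

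There are, however, two substantive gaps. First, you misplace the role of convexity: convexity of $F(\cdot,x)$ when $G$ is nonlinear is not used to control Evans' interior corrector (Evans' interior homogenization does not require convexity). It is needed for the $C^{1,\alpha}$ boundary regularity of solutions to the oblique problem (Theorem 2.6, citing Li--Zhang and Lieberman--Trudinger), which in turn drives the perturbation lemma (Lemma 3.6) underlying the ergodic comparison; without boundary Lipschitz/$C^{1,1}$ bounds the tangential-derivative dependence of $G$ cannot be controlled under small shifts.

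Second, the H\"older-in-$\nu$ sketch as stated would not go through. Pulling back $w'$ by a rotation $R$ with $R\nu=\nu'$ does not produce an approximate solution of the $\nu$ cell problem, because $F(\cdot,y)$ is \emph{not} rotation-invariant and the rotation destroys the $\Z^n$ periodicity of both $F$ and $G$ in the spatial variable. The paper's actual mechanism is a two-scale decomposition: (i) near the boundary, on each mesoscopic interval $I_k$ of length $N\e$ it replaces $G$ by the locally projected data $G_k$ (eq.~(5.3)) and carries out a \emph{near-boundary} homogenization producing slopes $\mu^N(G_k)$ (Lemma 5.6); (ii) beyond scale $N\e$ the operator is replaced by $\bar F$ (Lemma 5.8), and only in this middle region, where $\bar F$ alone governs, is the rotation invariance invoked (Lemma 5.9) to compare the $\nu_1$ and $\nu_2$ pictures. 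The exponent $\alpha=\tfrac{1}{5n}$ emerges from optimizing $\delta$ against $\theta_i=|\nu_i-\nu|$ and $T_\nu(\delta^{5/2})\le\delta^{-5(n-1)/2}$ across these two stages, not from a single rotation-plus-averaging estimate. Without the intermediate projection onto $G_k$ and the passage to $\bar F$ in the mid-field, your optimization has no rotation-invariant operator to exploit.
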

	
	The proof of Theorem 1.1 will be given later in this section, based on our main result (Theorem 1.2), which establishes rates of convergence for (approximate) cell problem solutions. Our work extends the previous results in \cite{CKL} on linear Neumann problems where $G(p,y)=G(y)$. For general $G(p,y)$ additional challenges arise due to the presence of tangential derivatives on the boundary condition, which necessitates Lipschitz regularity estimates for the solutions.  As noted in \cite{FK}, the continuity property of $\mu(\nu, q)$ fails when $\bar{F}$ is not rotation-invariant, even when it is convex. When the continuity result holds for $\mu$ one can expect to proceed as in \cite{CK} to address general domains, but the analysis would require higher regularity estimates on the solutions, so we do not pursue this here.
	
	\medskip
	
	It is unknown whether the form of the boundary condition such as \eqref{oblique} or \eqref{nonlinear} is preserved in the limit $\e\to 0$. With the exception of linear problems, the interaction between the operator $F$ and the boundary condition remains to be better understood to yield further characterizations of the homogenized problems.

	\medskip
	
\noindent	{\bf Literature}
	
	\medskip
	
	Before proceeding further, let us briefly describe some of relevant literature.  In the classical paper of \cite{BLP}, the following problem was considered:
	\begin{equation}
	-\nabla\cdot(A(\frac{x}{\e}) \nabla u_\e)=0 \hbox{ in } \Omega, \quad\quad \nu\cdot(A(\frac{x}{\e})\nabla u_\e)(x)=g(\frac{x}{\e}) \hbox{ on } \partial\Omega.
	\end{equation}
	For this co-normal boundary value problem, explicit integral formulas have been derived for the limiting operator as well as for the limiting boundary data, under the assumption that $\partial \Omega$ does not contain any flat piece with a rational normal.
	
	\medskip
	
	For linear elliptic systems with either Dirichlet or Neumann problem with co-normal derivatives,  there has been a recent surge of development in quantitative homogenization by integral representation of solutions: we refer to \cite{AKP},\cite{GVM},\cite{SZ} and the references therein. 
	
	\medskip
	
	For nonlinear problems, or even for linear problems with non co-normal boundary data, most available homogenization results concern half-space type domains whose boundary goes through the origin and is normal to a rational direction. In \cite{T}, Tanaka considered some model problems in half-space whose boundary is parallel to the axes of the periodicity by purely probabilistic methods. In \cite{Ari} Arisawa studied specific problems in oscillatory domains near half spaces going through the origin. Generalizing the results of \cite{Ari} for nonlinear boundary conditions, Barles, Da Lio, Lions and Souganidis \cite{BDLS} studied the problem for operators with oscillating coefficients, in half-space type domains whose boundary is parallel to the axes of periodicity. We also refer to \cite{GS} which adopts an integro-differential approach to study linear scalar problems with the specific Neumann problem $G(p,y)=g(y)$. 
	
	\medskip
	
	For the linear Neumann problem $G(p,y)=g(y)$ in $(P)_\e$, corresponding results to Theorem~\ref{thm:main} -Theorem 1.2 have been recently shown in \cite{CKL}.   General domains has been considered in \cite{CK} based on the cell problem analysis in \cite{CKL}.  Corresponding results for the Dirichlet boundary data has been obtained in \cite{F}.  Lastly for general operator $F$, \cite{FK} discusses the generic nature of discontinuity for the homogenized boundary data, for either linear Neumann or Dirichlet problem.

	\medskip
	
\noindent	{\bf Cell problem}
	
	\medskip
	
	By the formal expansion $u_\e = \bar{u}(x) + \e v(x,\frac{x}{\e}) + O(\e^2)$, for a rational $\nu$, the cell problem for $v$ was derived in \cite{BDLS} for a rational $\nu$ and $\tau=0$. There they find a unique constant $\mu=\mu(\nu,q)$ for $q\in <\nu>^{\bot}$ such that the boundary value problem
	$$
	\left\{\begin{array}{lll}
	F(D^2 v, y) = 0 &\hbox{ in } & \{y\cdot\nu \geq 0\},\\
	\mu = G(Dv+p, y) &\hbox{ on } &H_0,
	\end{array}\right. \leqno (C)
	$$
	with $p= \mu\nu+ q$, has a bounded periodic solution $v$ in $\{y\cdot\nu \geq 0\}$.  The existence of bounded $v$ leads to the uniform convergence of $u_\e$ to $\bar{u}$  in the limit $\e\to 0$ with $p = D\bar{u}$ on $H_0$.
	
	\medskip
	
	For general $\nu$ and $\tau$, an approximate cell problem needs to be derived, since $v$ is no longer expected to be periodic and thus compactness is lost: see problem $(P)_{\e,\nu,\tau,q}$ below.  In the context of $(C)$, our result shows that for irrational $\nu$, there exists a unique constant $\mu = \mu(\nu, q)$ for $q\in <\nu>^{\bot}$ such that the problem 
	$$
	\left\{\begin{array}{lll}
	F(D^2 v, y+\tau) = 0 &\hbox{ in } & \{y\cdot\nu\geq 0\},\\
	\mu = G(Dv+p, y+\tau) &\hbox{ on } &H_0
	\end{array}\right. \leqno (\tilde{C})
	$$
	with any $\tau\in\R^n$ has a solution with sublinear growth at infinity. To show this, we use the ergodicity of Neumann data in a scale depending on $\nu$, and the stability of solutions under perturbation of boundary conditions. When the homogenized operator $\bar{F}$ is rotation-invariant, we show that $v$ is stable as the normal direction of the domain $\nu$ varies. A quantiative version of this stability property yields the mode of continuity for $\mu$ as $\nu$ varies.

	\medskip

\noindent	{\bf A discussion on assumptions on $F$ and $G$}
	
	\medskip
	
	Our assumptions on $F$ and $G$ are mainly to  obtain Lipschitz estimates for the solutions of $(\tilde{C})$. The Lipschitz estimates ensure that the solution of the cell problem has ergodic structure with respect to translations along the Neumann boundary (see Lemma~\ref{perturbation}), which happens when $\e$ changes in $(P)_\e$ and when $\tau$ is not the origin.  
	Already to guarantee the Lipschitz bound,  available literature restricts $F(M,x)$ to be convex with respect to $M$ when $G$ is a nonlinear function of $Du$. We refer to \cite{BDL} for a detailed description of available regularity theory on nonlinear Neumann boundary problems.  For the continuity properties of $\mu$ we further need $C^{1,\alpha}$ estimates for solutions of $(\tilde{C})$, however this does not further restrict the class of problems we can address. To deal with domains with general geometry, the approach taken in \cite{CK} or \cite{FK} uses fundamental solutions as barriers to bound the potential singularity generated at points with rational normals. For our problem, while we suspect our result to hold in general domains, we suspect that these singular solutions may cause new challenges in dealing with perturbative arguments, due to their singularity in tangential derivatives.

	\medskip

	\subsection{Assumptions and main results}

	Let $\mathbb{T}$ be the $1$-periodic torus in $\R^n$, and let $\mathcal{M}^n$ be the space of real  $n \times n$ symmetric matrices. Consider the functions  $F(M,y):\mathcal{M}^n \times \mathbb{T}\to \R$ and $G( p, y):\R^n \times\mathbb{T}$ satisfying the following properties:
	\begin{itemize}
		\item[(F1)] (Uniform Ellipticity)  There exist constants $0<\lambda<\Lambda$ such that
		$$\lambda {\rm Tr}(N) \leq F(M,y)-F(M+N,y) \leq \Lambda {\rm Tr}(N)$$  for all $y \in \mathbb{T}$ and $M, N \in \mathcal{M}^n$ with  $N\geq 0$.  
		\item[(F2)] (1-Homogeneity)  $F(t M,y) = tF(M,y)$ for all $y \in \mathbb{T}$,   $t>0$ and  $M\in\mathcal{M}^{n}$. 
		\item[(F3)] (Lipschitz Continuity) There exists $C>0$ such that for all $y_1,y_2 \in \mathbb{T}$ and $M, N \in \mathcal{M}^n$, 
		$$|F(M,y_1) -F(N,y_2)| \leq C (|y_1-y_2|(1+\|M\| +\|N\|) + \|M-N\|) .$$
		\item[(G1)] (At most linear Growth) $|G(p,x)| \leq \mu_0(1+|p|)$.
		\item[(G2)] (Lipschitz continuity) $(1+|p|)|G_p|,  |G_y| \leq m(1+|p|)$ for some $m>0$.
		\item[(G3)] (Oblicity) $|G_p \cdot \nu| \leq c< 1$.
	\end{itemize}
	
	\medskip

	A typical example of an operator $F$ satisfying (F1)-(F3) is the linear elliptic operator
	\begin{equation}\label{div_elliptic}
	F(D^2u, x) = -\Sigma_{i,j} a_{ij}(x) \partial_{x_ix_j} u ,
	\end{equation}
	where $a_{ij}:\R^n\to \R$ is  periodic and Lipschitz continuous. A nonlinear example is the Bellman-Isaacs operator arising from stochastic optimal control and differential games
	\begin{equation}\label{BI}
	F(D^2u, x) = \inf_{\beta\in B}\sup_{\alpha\in A} \{\mathcal{L}^{\alpha, \beta} u\},
	\end{equation}
	where $\mathcal{L}^{\alpha,\beta}$ is a family of uniformly elliptic operators of the form \eqref{div_elliptic}. In fact, all operators satisfying (F1)-(F3) can be written as \eqref{BI}. As for $G$, the ones given in \eqref{oblique} and \eqref{nonlinear} with Lipschitz coefficients $c^{-1}\vec{\gamma}, c^{-1}g$ and $\theta$ satisfy (G1)-(G3).

	\medskip

	For $\tau\in\R^n$ and $\nu\in \mathcal{S}^{n-1}$, let us define the strip domain 
	$$\Pi (\tau, \nu) := \{x \in \R^n : -1 \leq (x-\tau) \cdot \nu  \leq 0\}, \quad H_s := \{(x-\tau) \cdot \nu  =s\}.
	$$
	For a given $q\in <\nu>^{\bot}$, let $u_\e$ solve the following approximate cell problem
	$$
	\left\{\begin{array}{lll}
	F(D^2 u_\e,\frac{x}{\e})=0 &\hbox{ in }& \Pi( \tau, \nu)\\ \\
	\partial_\nu u_\e = G(Du_\e,\frac{x}{\e})  &\hbox{ on }& H_{0}\\ \\
	u_\e(x) =  q \cdot x &\hbox{ on  }& H_{-1}
	\end{array}\right.\leqno (P)_{\e, \nu, \tau, q}.
	$$
	
	Now we are ready to state the main result.

	\begin{theorem} \label{main-cell-1} 
		Let $u_\e$ solve $(P)_{\e,\nu, \tau,q}$. Suppose that either $\nu$ is irrational or $\tau=0$. Then the following holds:
		
		\begin{itemize}
			\item[(a)] There exists $\mu=\mu(\nu,q)$ such that $u_\e$ converges uniformly to the linear profile
			$$u(x) := \mu((x-\tau)\cdot \nu + 1) + q\cdot x.$$
			Here, $\mu(\nu,q)$ is independent of $\tau$ and Lipschitz continuous with respect to $q$. Moreover we have
			\begin{equation}\label{rate}
			|u_\e- u| \leq C\Lambda(\e, \nu) \quad \hbox{ in } \Pi(\tau,\nu),
			\end{equation}
			where $\Lambda(\e, \nu)$ (as given in \eqref{TheFunction}) 
				 is an increasing function of $\e$ such that $\displaystyle{\lim_{\e\to 0}\Lambda(\e, \nu) =} 0$. 
			 
			\item[(b)] When $\bar{F}$ is rotation-invariant, there exists a continuous extension $\bar{\mu}(\nu,q): \mathcal{S}^{n-1}\times \R^n \to \R$  of $\mu(\nu,q)$ over irrational directions $\nu \in \mathcal{S}^{n-1} - \R\Z^n$.  Moreover
			 $\bar{\mu}$  is Lipschitz in $q$ and $C^{\alpha}$ in $\nu$, with $\alpha = \displaystyle{\frac{1}{5n}}$.

		\end{itemize}
	\end{theorem}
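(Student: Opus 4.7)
My plan is to treat the two parts of Theorem~\ref{main-cell-1} separately, with part (a) as the technical heart. For the rational case ($\nu\in\R\Z^n$ with $\tau=0$), the problem $(P)_{\e,\nu,0,q}$ is $\e$-periodic in the directions tangent to $H_0$, so after rescaling one recovers a genuine periodic cell problem on the strip $\Pi(0,\nu)$; existence of a unique constant $\mu(\nu,q)$ together with a bounded periodic corrector then follows from the argument in \cite{BDLS}. The main work, and the main new contribution relative to the linear case in \cite{CKL}, is the irrational case, for which I would first establish a uniform Lipschitz bound on $u_\e$ in $\Pi(\tau,\nu)$ independent of $\e$, using the regularity theory for nonlinear oblique problems from \cite{BDL}; this is where convexity of $F$ (when $G$ is nonlinear) enters. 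With the Lipschitz bound in hand, the oscillating Neumann data $G(Du_\e, x/\e)$ is uniformly bounded and Lipschitz in its periodic argument.

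The next step is to exploit ergodicity of the boundary trace. When $\nu$ is irrational, the translates of $\Z^n$ along $H_0$ equidistribute in $\T$ at a quantitative rate controlled by the diophantine type of $\nu$, which is what ultimately appears in $\Lambda(\e,\nu)$. Combined with the perturbation Lemma~\ref{perturbation}, which says that a small $L^\infty$ perturbation of the Neumann data produces a small $L^\infty$ perturbation of the solution (crucially relying on the Lipschitz bound so the tangential derivative in $G$ cannot amplify the perturbation), this allows me to define $\mu_\e$ as a spatial average of the Neumann flux and show that $u_\e$ is close to the linear profile $q\cdot x + \mu_\e((x-\tau)\cdot\nu+1)$. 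An iteration across dyadic scales then sharpens the error to $\Lambda(\e,\nu)$ and produces the limit $\mu(\nu,q)=\lim_{\e\to 0}\mu_\e$; the independence of $\tau$ is automatic since equidistribution erases the starting phase. Lipschitz dependence of $\mu$ on $q$ follows from the comparison principle applied to two problems with boundary data $q_1\cdot x$ and $q_2\cdot x$, together with (G1)--(G3).

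For part (b), when $\bar{F}$ is rotation-invariant, I would compare $(P)_{\e,\nu_1,\tau,q}$ and $(P)_{\e,\nu_2,\tau,q}$ for nearby irrational normals by rotating one half-space into the other; rotation invariance of $\bar{F}$ ensures the homogenized problem is preserved, so the difference of cell-problem solutions can be controlled via $C^{1,\alpha}$ estimates (again available under convex $F$). The exponent $\alpha=\tfrac{1}{5n}$ should arise from optimizing three competing scales: the ergodic/diophantine rate $\Lambda(\e,\nu_i)$, the rotation-perturbation error, and the $C^{1,\alpha}$ decay. The denominator $5n$ reflects the $(n-1)$-dimensional approximation of irrational $\nu$ by rationals combined with the loss taken at each balancing step. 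The continuous extension $\bar{\mu}$ to rational directions is then the unique H\"older-continuous extension from the dense irrational set in $\mathcal{S}^{n-1}$.

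The principal obstacle I anticipate is controlling the feedback between the tangential derivatives in $G$ and the non-compactness coming from irrational $\nu$. Without a uniform Lipschitz bound on $u_\e$, an $O(\e)$ perturbation of the boundary data could produce an $O(1)$ swing in $Du_\e$ near $H_0$, invalidating the stability lemma that drives the entire iteration; this is exactly why the Lipschitz theory of \cite{BDL} (and the convexity hypothesis on $F$) is essential. For part (b) the technical demand is sharper: one needs $C^{1,\alpha}$ rather than just Lipschitz, and one has to track carefully how rotations interact with the lattice $\Z^n$ of periodicity, which is where the H\"older exponent $1/(5n)$ is both determined and limited.
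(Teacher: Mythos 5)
Your overall outline is aligned with the paper's strategy for part (a): the chain is indeed Lipschitz regularity for the oblique problem (Theorem~\ref{thm:reg2}, which is where convexity of $F$ is used), quantitative equidistribution along the Neumann hyperplane (Lemma~\ref{lemma-M}), the perturbation estimate (Lemma~\ref{perturbation}), and a linear approximation of $u_\e$ with a comparison argument giving Lipschitz dependence on $q$. Two descriptive points diverge from what the paper actually does, one cosmetic and one substantive.

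Cosmetically, there is no ``iteration across dyadic scales.'' The error $\Lambda(\e,\nu)$ in \eqref{TheFunction} comes out in one pass: Lemma~\ref{Claim 2} shows $u_\e - l$ is nearly constant on hyperplanes a distance $d$ from $H_0$ with error $O(d^{-1}\e(MN+\omega_\nu(N)) + \omega_\nu(N))$, and Corollary~\ref{coC2} simply sets $d=\e^{1-k}$ and optimizes over $k$ and $N$. The convergence of $\mu(u_\e)$ and the independence of $\tau$ are then obtained from a direct two-scale comparison of $w_\e$ and $w_\eta$ for $\eta<\e$ (extending the shorter strip to the longer one by the barriers $\overline{w},\underline{w}$), not by a dyadic scheme.

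The substantive issue is part (b). Rotating one half-space into the other \emph{cannot} be done at the level of $(P)_{\e,\nu_i,\tau,q}$, because rotation does not respect the $\e\Z^n$-periodicity of $F(\cdot,x/\e)$ and $G(\cdot,x/\e)$: after rotation the operator is no longer periodic, and there is no homogenization theorem to appeal to. This is why the paper's Section~5 is forced into a \emph{two-scale} construction. It first replaces $G$ near the boundary by a finite family of truly periodic slices $G_k$ (the discretization \eqref{neumann:proj}), performs a ``near-boundary'' homogenization on strips of width $N\e$ and $M\e$ respectively for $\nu_1$ and $\nu_2$ to produce the same discrete slopes $\mu^N(G_k),\mu^M(G_k)$ (compared via \eqref{eqn5}), and only at the second, mesoscopic, scale — where $F$ has already been replaced by the $x$-independent $\bar F$ — does it invoke rotation invariance (Lemmas~\ref{Lemma4.6}--\ref{Lemma4.7}) to change the normal direction. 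Without this decoupling of scales, the rotation step you propose has no periodic structure to work with, and the $C^{1,\alpha}$ estimates do not by themselves close the argument. Your final remark that one has to ``track carefully how rotations interact with the lattice'' is exactly where the gap lies; the two-scale barrier construction with the $G_k$ discretization is the missing mechanism, and it is what produces the exponent $1/(5n)$ after combining the Dirichlet approximation bound $T_\nu(s)\lesssim s^{-(n-1)}$ with the $\delta^{1/2}$ loss per balancing step.
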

	
	The proof is given in Theorem~\ref{main-cell-2}, Theorem~\ref{continuity} and Theorem~\ref{lipinq}.
	
	\medskip

	\medskip
	
	\noindent	{\bf A discussion on the rate of convergence  $\Lambda(\e,\nu)$}
	
	\medskip

Here we briefly describe the geometric process used in section 4 to obtain an upper bound for the rate function $\Lambda$ in \eqref{rate}.  Given $\delta>0$, we are interested in finding $\e_0 =\e_0(\nu, \delta)$ such that 
	$|u_\e -u| \leq C \delta \,\,\hbox{ for }\,\, \e \leq \e_0.$
	
	\medskip
	
If $\nu$ is rational and $\tau=0$, $F$ and $G$ are periodic along $\nu$-direction with period $T_\nu$. Hence we expect that $\e_0$ needs to be smaller than $1/T_{\nu}$ for a fixed $\delta$. In fact Theorem~\ref{main-cell-2} (d) yields that
		$$\Lambda(\e, \nu) \leq  \delta \,\,\hbox{ for } \,\,\e \leq \e_0 = \delta^2/T_\nu $$
		and thus yields a uniform bound 
		\begin{equation}\label{unib}
		\Lambda(\e, \nu)   \leq C(\nu)\e^{1/2}.
		\end{equation}

If $\nu$ is irrational, for each $\delta$ we choose a reference rational direction $P$ as follows: choose a point $P=P(\nu, \delta) \in \Z^n$ such that 	
\begin{equation} \label{tdelta}
	|T\nu -P|\leq \delta \,\, \hbox{ for some } \,\, T=T(\nu,\delta)>0.
\end{equation}	
	 Then $F$ and $G$ are periodic along $P$-direction with period $T+O(\delta)$.  If we let $\theta = \theta(\nu, \delta)$ be the angle between $\nu$ and $P$, then (\ref{tdelta}) can be written as $\theta < \delta/T$.
	 If $R< 1/\theta$, then due to the proximity of $\nu$ to $P$ direction, $G(p,\cdot)$ takes only limited values of $G$ on $H_0\cap B_R(\tau)$, even though $\nu$ is irrational. In other words $G(p,\cdot)$ exhibits ergodicity on $H_0$ only in a neighborhood of size $R> 1/\theta$. For this reason  $u_\e$ homogenizes only when $\e \leq O(\theta)$. Indeed Theorem~\ref{main-cell-2} (c) yields that
		$$\Lambda(\e, \nu) \leq  \delta \,\, \hbox{ for } \,\, \e \leq \e_0 = \delta^2 \theta.$$
	Since $\theta$ depends on not only $\nu$ but $\delta$, we are not able to separate the dependence of the rate function on $\e$ and $\nu$, without further estimate of $\theta$ or $T$ as $\delta$ varies. Such estimate would require better understanding of  the {\it discrepancy } function discussed in the Appendix.

	\medskip

\noindent {\bf Proof of Theorem~\ref{thm:main}}
	
\medskip
	
	Once Theorem~\ref{main-cell-1} (a) is obtained, one can derive our main theorem by the {\it perturbed test function} arguments introduced by Evans \cite{E}.

	\medskip
	
	Let $u_\e$ solve $(P)_\e$, and define $u^*$ and $u_*$ as
	$$
	u^*= \mathord{\limsup}^* u_\e:= \lim_{r\to 0} \sup_{(y,\e)\in S_r^x} u_\e(y); \quad\,\,\, u_*= \mathord{\liminf}_* u_\e:= \lim_{r\to 0} \inf_{(y,\e)\in S_r^x} u_\e(y),
	$$
	where $S_r^x = \{(y,\e): y\in\Pi, |x-y|<r, 0<\e<r\}$.
	First, observe that, by using a barrier of the form 
	$$
	\varphi_M(x):= M((x-\tau)\cdot\nu+1)  + f(x),
	$$
	where $f$ is a $C^2$-approximation of $h$ that is larger than $h$, one can conclude that $u_\e \leq \varphi_M$ in $\Pi$ for any large $M$, and thus $u^* \leq h$ on $H_{-1}$. Similar arguments yield that $u_*\geq h$ on $H_{-1}$.
	
	\medskip
	
	We claim that $u^*$ and $u_*$ are respectively a viscosity subsolution and  supersolution of $(P)$. If the claim is true, then Corollary~\ref{thm:cp2}  applies to yield that $u^*\leq u_*$. Since the opposite inequality is true from the definition, we conclude that $u^*=u_*$, which means that $u_\e$ uniformly converges in $\bar{\Omega}$.
	
	\medskip
	
	Below we will only show that $u^*$ is a subsolution of $(P)$, since the proof for $u_*$ can be shown by parallel arguments. To this end,
	suppose that $u^* - \phi$ has a local max in $B_r(y_0) \cap \bar{\Pi}$ with a smooth test function $\phi$.  If $y_0$ is in the interior of $\Pi$, then $\bar{F}( D^2\phi)(y_0) \leq 0$ due to standard interior homogenization (see for instance \cite{E}). Hence it remains to show that if $y_0$ is on the Neumann boundary then $\phi$ satisfies 
	\begin{equation}\label{iinequality}
	\partial_{\nu}\phi \leq \mu(\nu, q:=D^T\phi) \hbox{ at } x=y_0.
	\end{equation}

	First suppose that $\nu$ is rational and $y_0\cdot \nu = 0$. We may assume for simplicity that $u(y_0)=\phi(y_0)=0$ and define $P(x) := D\phi(y_0)\cdot (x-y_0)$. Since $\Pi \subset \{x: x\cdot \nu < 0\}$, for any $\delta>0$ we may choose $r$ sufficiently small that  $l_{\delta}(x):= P(x) -\delta(x\cdot\nu)$ is strictly larger than $u^*$ on $B_r(0)\cap\Pi$. Then for sufficiently small choice of $\e$ we have
	\begin{equation}
	\label{iiin}
	l_{\delta} > u_\e \hbox{ on }  B_r(0) \cap H_{-r\delta}, \quad \hbox{ where } H_{-r\delta} = \{x \cdot \nu = -r \delta\}.
	\end{equation}
	
	Let  $\bar{\e}:= (r\delta)^{-1}\e$ and consider the re-scaled function $v_\e(x):= (r\delta)^{-1} u_\e( r\delta x) -l_{\delta}(x)$. Then $v_\e $ is a subsolution of $(P)_{\bar{\e},\nu, 0, q}$, in the local domain $\Pi \cap B_{\delta^{-1}}(0)$. Note that the corresponding Neumann boundary for $v_\e$ remains to be $H_0$ since $y_0\cdot \nu = 0$: in general it will be $\{(x-\tau) \cdot \nu  =0\}$  with 
	\begin{equation}\label{translation}
	\tau =   (\bar{\e})^{-1} y_0,
	\end{equation}
	and thus the choice of $\tau$ must change as we vary $\bar{\e}$. We will compare $v_\e$ with $w_{\bar{\e}}$, the unique bounded solution of $(P)_{\bar{\e},\nu, 0, q}$ in $\Pi$ obtained in Lemma 3.3.
	Due to the localization lemma (Lemma~\ref{lem:side}) we have
	\begin{equation} \label{88}
	v_\e  \leq w_{\bar{\e}} +M\delta.
	\end{equation}

	Due to Theorem~\ref{main-cell-1} we have
	$$
	w_{\bar{\e}} \leq  \mu(\nu,q) (x\cdot \nu +1) + q\cdot x +  \Lambda(\bar{\e},\nu) \hbox{ in } \Pi.
	$$
	Since $\Lambda(\e,\nu)\to 0$ as $\e\to 0$, \eqref{iiin} and (\ref{88}) yield that
	\begin{equation}\label{med}
	\limsup_{\e\to 0}(r\delta)^{-1}u_\e(r\delta x) = \limsup_{\e\to 0} v_\e (x) + l_{\delta}(-\nu)  \leq   \mu(\nu,q) (x\cdot \nu +1) + q\cdot x +l_{\delta}(-\nu) + M \delta \hbox{ in } \Pi.
	\end{equation}

	\medskip
	
	Now suppose that \eqref{iinequality} is false, then there exists $\delta>0$ such that 
	\begin{equation}\label{contradiction}
	\partial_{\nu}\phi(0) = \delta - l_{\delta}(-\nu)  > \mu(\nu, q) + (M+1)\delta.
	\end{equation} 
	This means that the right side of \eqref{med} is strictly negative at $x=0$, which contradicts the assumption that $u^*(0)=0$.

	\medskip
	
	Next suppose that $\nu$ is irrational, we need to choose $\tau$ depending on $\bar{\e}$ so that \eqref{translation} holds. Then  we argue as above with a solution of $(P)_{\bar{\e}, \nu, \tau, q}$ in $\Pi$. Here we must use the fact that $\nu$ is irrational and thus Theorem 1.2 ensures the uniform convergence of $w_{\bar{\e}}$ to the linear profile is regardless of the choice of $\tau$.

	\hfill$\Box$
	
	\section{Preliminaries}

	We adopt the following definition of viscosity solutions, which is equivalent to the one given in \cite{CIL}. Let $\Omega$ be domain in $\R^n$ with $\partial \Omega$ as a disjoint union of  $\Gamma_0$ and $\Gamma_1$. Let $F$ satisfy (F1) - (F3) in the previous section, and  let $G$ satisfy $(G3)$ with $G(p,x)$ being uniformly continuous in $p$ independent of the choice of $x$.   For $f \in C(\Gamma_0)$ consider the following problem
	$$
	\left\{\begin{array}{lll}
	F(D^2 u,x)=0 &\hbox{ in }& \Omega\\ \\
	u=f(x) &\hbox{ on }& \Gamma_0\\ \\
	
	\frac{\partial}{\partial\nu} u = G(Du,x)  &\hbox{ on }& \Gamma_1
\end{array}\right.\leqno (P)
$$
where $\nu=\nu(x)$ is the outward unit normal at $x \in \Gamma_1$. Here we replace $(G3)$ with 

\vspace{10pt}

(G3)' (Oblicity) $|G_p\cdot\nu| \leq c<1$ on $\partial\Omega$, where $\nu = \nu_x$ is the outward normal at $x\in \partial\Omega$.

\begin{definition}
	\begin{itemize}
		\item[(a)]  An upper semi-continuous function $u:\bar{\Omega}\to \RR$ is a {\rm viscosity subsolution} of $(P)$ if $u$ cannot cross from below any $C^2$ function $\phi$ which satisfies 
		$$
		\left\{\begin{array}{l}
		F(D^2\phi, x)>0 \hbox{ in } \Omega, \quad \phi > f \hbox{ on } \Gamma_0,\\
		\nu\cdot D\phi >G(D \phi,x) \hbox{ if  } \tau\in\Gamma_1.
		\end{array}\right.
		$$
		\item[(b)] A lower semi-continuous function $u:\bar{\Omega}\to \RR$ is a {\rm viscosity supersolution} of $(P)$ if
		if $u$ cannot cross from above any $C^2$ function $\varphi$ which satisfies
		$$
		\left\{\begin{array}{l}
		F(D^2\phi, x ) <0 \hbox { in } \Omega,  \quad \phi < f \hbox{ on } \Gamma_0,\\
		\nu\cdot D\phi < G(D \phi,x) \hbox{ on } \Gamma_1.
		\end{array}\right.
		$$
		\item[(c)] $u$ is a {\rm viscosity solution} of $(P)$ if its upper semi-continuous envelope $u^*$ is a viscosity subsolution and its lower semi-continuous envelope $u_*$ is a viscosity supersolution of $(P)$. 
	\end{itemize}
\end{definition}

Existence and uniqueness of viscosity solutions of $(P)$ are based
on the comparison principle we state below. We refer to
\cite{CIL} and \cite{il} for details on the proof of the following theorem as well as the
well-posedness of the problem $(P)$.

\begin{theorem}\label{thm:comp}
	Let $G$ and $F$ satisfy the conditions (G1) and (G3) and (F1) - (F3) in the previous section, with $G$ being uniformly continuous in $p$ independent of the choice of $x$.
	Let $u$ and $v$ be respectively bounded viscosity subsolution and supersolution of $(P)$ in a bounded domain $\Omega$. Then $u \leq v$ in $\Omega$.
\end{theorem}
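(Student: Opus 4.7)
The plan is to adapt the standard viscosity comparison argument of Ishii--Lions for fully nonlinear elliptic equations with oblique boundary conditions (as in \cite{CIL,il}), with two adjustments dictated by our hypotheses: the $1$-homogeneity of $F$ together with uniform ellipticity provides a cheap strict-subsolution perturbation, and the quantitative obliqueness $|G_p\cdot\nu|\le c<1$ allows a shift-penalization that resolves the boundary case. Suppose for contradiction that $M:=\sup_{\bar\Omega}(u-v)>0$ is attained at some $\bar x\in\bar\Omega$.

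Step~1 (strict perturbation). Using (F1)--(F2), one can construct $w\in C^2(\bar\Omega)$ with $F(D^2 w,x)\ge c_0>0$ in $\Omega$ and $\partial_\nu w - G_p(\cdot,x)\cdot Dw \ge c_1>0$ on $\Gamma_1$; e.g.\ $w(x)=-e^{-K\,\xi\cdot x}$ for $K$ large and $\xi$ chosen using (G3) so that $\xi\cdot\nu>0$ uniformly on $\Gamma_1$ and the obliqueness slack $1-c$ absorbs the $G_p$ term. Replacing $u$ by $u_\eta:=u-\eta w$ (for small $\eta>0$ so that $\sup(u_\eta-v)$ is still positive) we may assume $u$ is a \emph{strict} subsolution of the equation and a strict subsolution of the oblique condition, both with a fixed margin depending on $\eta$.

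Step~2 (doubling of variables). Introduce
$$
\Phi_{\alpha,\beta}(x,y) := u(x) - v(y) - \tfrac{\alpha}{2}\,|x-y-\beta\zeta|^2,
$$
where $\zeta$ is a unit vector with $\zeta\cdot\nu(\bar x)>0$ and $\beta=\beta(\alpha)\downarrow 0$ much slower than $1/\sqrt{\alpha}$. Let $(x_\alpha,y_\alpha)$ maximize $\Phi_{\alpha,\beta}$ over $\bar\Omega\times\bar\Omega$. The usual penalization estimates give $x_\alpha,y_\alpha\to\bar x$ and $\alpha|x_\alpha-y_\alpha-\beta\zeta|^2\to 0$ as $\alpha\to\infty$. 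The role of the shift $\beta\zeta$ is: if $\bar x\in\Gamma_1$, the vector $p_\alpha:=\alpha(x_\alpha-y_\alpha-\beta\zeta)$ (which acts as $D_x$-gradient of the penalty at $x_\alpha$ and as $-D_y$-gradient at $y_\alpha$) has a component along $\zeta$ that, combined with the obliqueness assumption (G3) and the uniform continuity of $G$ in $p$, forces $x_\alpha\in\Omega$ (it cannot satisfy the strict subsolution oblique inequality on $\Gamma_1$) while $y_\alpha$ may be on $\Gamma_1$, in which case the supersolution oblique inequality for $v$ is available. This is where the quantitative $c<1$ in (G3) is used.

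Step~3 (Ishii's lemma and contradiction). Applying Ishii's lemma to $\Phi_{\alpha,\beta}$ at $(x_\alpha,y_\alpha)$ yields matrices $X_\alpha,Y_\alpha$ with $X_\alpha\le Y_\alpha + o_\alpha(1)$ and with $(p_\alpha,X_\alpha)\in\overline{J}^{2,+}u(x_\alpha)$, $(p_\alpha,Y_\alpha)\in\overline{J}^{2,-}v(y_\alpha)$. Testing the strict subsolution and supersolution inequalities and subtracting gives
$$
0 < c_0 \le F(X_\alpha,x_\alpha) - F(Y_\alpha,y_\alpha),
$$
and then (F1) together with the Lipschitz bound (F3), the convergence $x_\alpha,y_\alpha\to\bar x$, and $X_\alpha-Y_\alpha\le o_\alpha(1)$ force the right side to vanish as $\alpha\to\infty$, a contradiction. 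In the boundary case, the supersolution oblique inequality at $y_\alpha$ reads $p_\alpha\cdot\nu(y_\alpha)\le G(p_\alpha,y_\alpha)$, while the absence of $x_\alpha$ on $\Gamma_1$ (or the corresponding strict subsolution inequality if it were there) gives a conflicting estimate; the uniform $p$-continuity of $G$ lets us pass to the limit without controlling $|p_\alpha|$. Sending $\alpha\to\infty$ and then $\eta\to 0$ yields $M\le 0$, the desired contradiction.

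The main obstacle is the calibration in Step~2: choosing $\zeta$ and the rates $\beta(\alpha)$ so that, simultaneously, $(x_\alpha,y_\alpha)$ concentrate at $\bar x$, the shift is negligible in the limit, and the oblique slack $1-c$ strictly dominates the boundary test-function inequalities. This is classical but the bookkeeping is the delicate point; once it is set up correctly, the interior argument via Ishii's lemma and (F1)--(F3) is routine.
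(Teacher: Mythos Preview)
The paper does not supply its own proof of this theorem: immediately before the statement it writes ``We refer to \cite{CIL} and \cite{il} for details on the proof of the following theorem as well as the well-posedness of the problem $(P)$.'' Your plan is therefore not to be compared against an argument in the paper but against the cited Ishii--Lions machinery, and indeed your sketch is precisely the standard route taken there: doubling of variables with a directional shift in the penalty to exploit the obliqueness slack $1-c$ from (G3), followed by Ishii's lemma and the ellipticity/Lipschitz structure (F1), (F3) to close.

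Two small points of bookkeeping to watch if you flesh this out. First, your Step~1 uses $G_p$ freely; the theorem only assumes (G1), (G3) and uniform continuity of $G$ in $p$, so the strict boundary perturbation should be phrased using the one-sided condition encoded in (G3) (effectively $|G(p+t\nu,x)-G(p,x)|\le ct$) rather than a mean-value identity. Second, your claim in Step~2 that the shift ``forces $x_\alpha\in\Omega$'' is slightly stronger than what the standard argument gives; what one actually shows is that if $x_\alpha\in\Gamma_1$ then the viscosity boundary alternative at $x_\alpha$ combined with the shifted test gradient $p_\alpha$ and (G3) already yields a contradiction (and symmetrically for $y_\alpha$), so one may assume both the interior inequalities hold. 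This is the delicate calibration you flag at the end, and it is exactly how \cite{il} handles it. With those adjustments your outline matches the references the paper cites.
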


For  a symmetric $n\times n$ matrix $M$, we decompose $M=M_+ -M_-$ with $M_{\pm} \geq 0$ and $M_+M_- =0$. We define the Pucci operators as 
$$
\mathcal{P}^+(M) =- \Lambda tr(M_+) + \lambda tr(M_-)
$$
and
$$
\mathcal{P}^-(M)=-\lambda tr(M_+) +\Lambda tr(M_-)
$$
where $0 < \lambda < \Lambda$.
Later in the paper we will utilize the fact that the difference of two solutions of $F(D^2u, x)=0$ is both a subsolution of $\mathcal{P}^+ = 0$ and a supersolution of $\mathcal{P}^- =0$.
(see \cite{CC}).

\medskip

Next we state some regularity results that will be used throughout this paper.

\begin{theorem}\label{lemma-reg0}[Chapter 8, \cite{CC}, modified for our setting]
	Let $u$ be a viscosity solution of $F(D^2u,x)=0$ in a domain $\Omega$.
	Then for any compact subset $\Omega'$ of
	$\Omega$, we have
	$$
	\|Du\|_{L^\infty(\Omega')} \leq Cd^{-1}\|u\|_{L^\infty(\Omega)},
	$$
	where $d=d(\Omega',
	\partial\Omega)$ and $C>0$ depends on $n$, $\lambda$ and $\Lambda$. 
\end{theorem}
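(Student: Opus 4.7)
The plan is to reduce the estimate to the standard interior $C^{1,\alpha}$ theory of Caffarelli--Cabr\'e on the unit ball by a rescaling that exploits the $1$-homogeneity assumption (F2).

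\medskip

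First, fix $x_0\in \Omega'$ and set $r:=d(\Omega',\partial\Omega)/2$, so that $B_r(x_0)\subset \Omega$. Normalize the solution by setting $M:=\|u\|_{L^\infty(\Omega)}$ and
$$
v(y):=\frac{1}{M}\,u(x_0+ry), \qquad y\in B_1,
$$
so that $\|v\|_{L^\infty(B_1)}\leq 1$. Since $D^2 u(x_0+ry) = (M/r^2)D^2 v(y)$, (F2) yields
$$
0=F(D^2u,x_0+ry) = \frac{M}{r^2}\,F(D^2 v,x_0+ry),
$$
so $v$ is a viscosity solution on $B_1$ of $\tilde F(D^2 v,y)=0$, where $\tilde F(N,y):=F(N,x_0+ry)$. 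This rescaled operator satisfies (F1) with the same constants $\lambda,\Lambda$, and (F3) with Lipschitz constant $rC\leq (\mathrm{diam}\,\Omega)\cdot C$ in the spatial variable (in fact smaller than the original whenever $r\leq 1$).

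\medskip

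Next I would invoke the standard interior $C^{1,\alpha}$ estimate for uniformly elliptic, $1$-homogeneous fully nonlinear operators with Lipschitz spatial dependence, which is exactly the content of the cited Chapter~8 of \cite{CC}: there exist $\alpha\in(0,1)$ and $C_0>0$ depending only on $n,\lambda,\Lambda$ (and the fixed Lipschitz constant from (F3), absorbed in $C_0$) such that
$$
\|Dv\|_{L^\infty(B_{1/2})}\leq C_0\,\|v\|_{L^\infty(B_1)}\leq C_0.
$$
Undoing the change of variables, $Du(x)=(M/r)\,Dv((x-x_0)/r)$, so
$$
\|Du\|_{L^\infty(B_{r/2}(x_0))}\leq \frac{C_0}{r}\,M = \frac{2C_0}{d}\,\|u\|_{L^\infty(\Omega)}.
$$
Because $x_0\in \Omega'$ was arbitrary and the balls $B_{r/2}(x_0)$ cover $\Omega'$, this already gives the desired bound at each point of $\Omega'$ with $C=2C_0$.

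\medskip

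There is no genuine obstacle here beyond invoking the appropriate interior regularity result; the one point that must be checked is precisely that the rescaled operator $\tilde F$ does not degrade the constants in the cited estimate. This is ensured by (F2), which makes the equation invariant under the multiplicative rescaling $u\mapsto u/M$, and by (F3), which shows that the Lipschitz modulus of $\tilde F$ in $y$ is controlled independently of $r\in (0,\mathrm{diam}\,\Omega]$. Thus the constant produced by the $C^{1,\alpha}$ theorem depends only on $n,\lambda,\Lambda$ (with the universal constant from (F3) implicit), yielding the stated estimate.
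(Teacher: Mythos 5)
The paper does not prove this statement: it is recorded as a citation of Chapter~8 of \cite{CC}, ``modified for our setting,'' so there is no proof in the text to compare against. Your rescaling argument is the natural way to supply the missing details and is essentially correct: using (F2) to preserve the equation under $u\mapsto u/M$, (F1) to keep the ellipticity constants, and (F3) to control the modulus of continuity of $\tilde F(\cdot,y)=F(\cdot,x_0+ry)$, one reduces to the interior $C^{1,\alpha}$ estimate on $B_1$ and scales back. Two small caveats worth recording. First, the constant you obtain necessarily also depends on the Lipschitz constant in (F3) (through the $\beta$-quantity in Caffarelli's perturbation scheme); the statement's ``$C$ depends on $n,\lambda,\Lambda$'' should be read informally as ``on the structure constants,'' and you are right to flag that this dependence is absorbed into $C_0$. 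Second, the argument as written only yields a constant independent of $r$ for $r$ bounded (say $r\le 1$), since for $r$ large the Lipschitz modulus $rC$ of $\tilde F$ in the $y$-variable degrades the interior estimate; this is harmless in the paper because the ambient domain is the strip $\Pi$ of width $1$, so that $d\le 1$ and hence $r\le 1/2$ in every application. If one wanted the statement for arbitrary $d$, a separate large-scale argument (e.g.\ compactness/homogenization à la Avellaneda--Lin, comparing against solutions of $\bar F(D^2\cdot)=0$) would be required, but that is outside the scope of what the paper uses.
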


As mentioned in the introduction, regularity results for nonlinear Neumann problem is rather limited. $C^{0,\alpha}$ estimates has been obtained by Barles and Da Lio in general framework \cite{BDL}. While a priori results for the gradient bounds are available for general $F$ and $G$ in \cite{LT}, their results are based on linearization and thus require existence of classical solutions. For $G(p,x)$ that is linear in $p$,  regularity estimates on $Du$ are recently obtained by Li and Zhang \cite{LZ}.

\begin{theorem}\label{thm:reg2} \cite{LZ}, \cite{LT}
	Let $u$ be a viscosity solution of $(P)$ with $|u| \leq M$. 
	$$
	B_r^+:= \{|x|< r\} \cap\{x\cdot e_n \geq 0\}\hbox{ and }\quad\Gamma
	:= \{x\cdot e_n = 0\}\cap B_1.
	$$
	Let $u$ be a viscosity solution of
	$$
	\left\{\begin{array}{lll}
	F(D^2u,x)=0 &\hbox{ in }& B_1^+ \\
	\nu\cdot Du = G(Du,x) &\hbox{ on }&\Gamma.
	\end{array}\right.
	$$
For $F$ and $G$ satisfying $(F1)-(F3)$ and $(G1)-(G3)$, suppose that either (A) $F(M,x)$ is convex with respect to $M$, or (B) $G(p,x)$ is linear with respect to $p$. Then for any $0<\alpha<1$ we have 
	\begin{equation}\label{reg2_1}
	\|u\|_{C^{0,\alpha}(B_{1/2}^+)} , \|Du\|_{C^{0,\alpha}(B_{1/2}^+)} \leq C,
	\end{equation}
	where $C$ depends on $\alpha$ and $M$ as well as the constants given in $(F1)-(F3)$ and $(G1)-(G3)$.
\end{theorem}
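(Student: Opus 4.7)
The plan is to split the argument into two steps: first a $C^{0,\alpha}$ oscillation-decay bound at the Neumann boundary, then an improvement-of-flatness scheme promoting this to $C^{1,\alpha}$. For the $C^{0,\alpha}$ step I would follow the Barles--Da Lio strategy: at any $x_0 \in \Gamma$, construct a radial barrier $\phi(x) = M(|x-x_0|^\alpha - \rho^\alpha)$, modified near $\Gamma$ by a small tangential shift $x \mapsto x + \delta e_T$ which, thanks to the strict obliquity (G3) $|G_p \cdot \nu| \leq c < 1$ and the at-most-linear growth (G1), produces a supersolution of the oblique Neumann problem for any prescribed $\alpha < 1$ and sufficiently large $M$. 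Comparing with such barriers on $B_r^+(x_0)$ and iterating the resulting oscillation decay yields the uniform $C^{0,\alpha}$ bound in $B_{1/2}^+$.

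For the $C^{1,\alpha}$ step I would run a contradiction-compactness argument. If the estimate failed, one could extract a sequence of solutions $u_k$, radii $r_k \to 0$ and affine functions $\ell_k$ with $\sup_{B_{r_k}^+}|u_k - \ell_k| / r_k^{1+\alpha} \to \infty$. Rescaling by the normalized excess and passing to a limit using the $C^{0,\alpha}$ bound (and the interior Krylov--Safonov theory for $F$) produces a bounded global solution $w$ in a half-space satisfying a frozen-coefficient equation $\tilde F(D^2 w) = 0$ together with a frozen-coefficient oblique Neumann condition $\nu \cdot Dw = \tilde G(Dw)$. The dichotomy (A) vs.\ (B) enters precisely here: under (A), convexity of $F(\cdot,x)$ in $M$ is preserved in the limit and Evans--Krylov combined with classical oblique derivative theory gives $C^{2,\alpha}$ regularity for $w$; under (B), linearity of $G$ in $p$ directly yields a classical linear oblique problem to which Lieberman's boundary $C^{1,\alpha}$ estimates apply. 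In either case $w$ must then be affine to leading order, contradicting the blow-up assumption.

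The main obstacle is ensuring the obliquity constant in (G3) and the ellipticity constants in (F1) survive intact in the rescaling and blow-up step, so that the compactness limit actually solves a well-posed frozen-coefficient oblique Neumann problem; one must in particular verify uniform equicontinuity of the $G_p$-field along the blow-up to avoid losing the Venttsel-type sign condition. A secondary subtlety is the choice of $\alpha$: the $C^{0,\alpha}$ barrier step requires $\alpha < 1$ from the outset, while the improvement-of-flatness step needs $\alpha$ strictly below the best exponent available for the linearized limit problem; under (A) this exponent comes from Evans--Krylov and under (B) from Lieberman, and both cases accommodate the full range $\alpha \in (0,1)$ claimed in the statement.
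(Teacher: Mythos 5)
This theorem is not proved in the paper: it is stated as a citation to \cite{LZ} and \cite{LT}, with the surrounding discussion explaining that case (B) is precisely the Li--Zhang result for linear oblique conditions and that case (A) relies on Lieberman--Trudinger type a priori estimates, which ``require existence of classical solutions,'' and hence convexity to make that viable. Your two-step scheme ($C^{0,\alpha}$ oscillation decay followed by improvement of flatness) is indeed the route taken in \cite{LZ} for case (B), and your insistence on tracking the obliquity constant through the blow-up is a real concern that must be handled.

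The gap is in case (A). You write that ``Evans--Krylov combined with classical oblique derivative theory gives $C^{2,\alpha}$ regularity for $w$,'' but this conflates an interior result with a boundary one: Evans--Krylov is an interior estimate for convex/concave $F$, and there is no off-the-shelf boundary Evans--Krylov theorem for a fully nonlinear oblique condition $\nu\cdot Dw=\tilde G(Dw)$. The natural boundary reference, \cite{LT}, produces \emph{a priori} gradient and Hessian estimates for \emph{classical} solutions via linearization; invoking it for the blow-up limit $w$ is circular, since at that stage $w$ is only known to be continuous (or at best $C^{1}$), and establishing that it is classical is exactly the step one must justify. To close this, one would have to either (i) prove a bona fide boundary regularity theorem for the frozen nonlinear oblique problem (e.g.\ via approximation of $(F,G)$ by smooth data, method of continuity to get classical solutions of the approximations, the LT estimates uniformly in the approximation parameter, and a passage to the limit in the viscosity sense), or (ii) bypass $C^{2,\alpha}$ entirely and prove a boundary $C^{1,\beta}$ estimate for the frozen nonlinear oblique problem directly. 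Either way, the single sentence in your sketch does not carry the load, and the convexity of $F$ is used more subtly than ``it is preserved in the limit.'' A secondary, smaller point: the BDL barrier gives $C^{0,\alpha_0}$ for some structural $\alpha_0>0$, not directly for every $\alpha<1$; the full range in \eqref{reg2_1} is a posteriori, once the gradient bound is in hand, as you partly acknowledge at the end.
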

Our proof extends in general to the cases where the estimate \eqref{reg2_1} holds for some $\alpha>0$.

\medskip

Lastly we mention interior homogenization result from \cite{CK}, which is a modified version of homogenization results such as in \cite{evans}.

\begin{theorem}\label{ext} (Theorem 2.14, \cite{CK})
	Let $K$ be a positive constant and let $f:\R^n\to\R$ be bounded and H\"{o}lder continuous. Given $\nu\in\mathcal{S}^{n-1}$, let $u_N:\{-K\leq x\cdot\nu\leq 0\}\to \R$ be the unique bounded viscosity solution of  
	$$
	\left\{\begin{array}{lll}
	F(D^2 u_N, Nx)=0 \hbox{ in } \{-K\leq x\cdot\nu \leq 0\};\\ \\
	\nu \cdot D u_N = f(x) \hbox{ on } \{x\cdot\nu=0\}, \qquad u = 1 \hbox{ on }  \{x\cdot\nu=-K\} .
	\end{array}\right.\leqno(P_N)
	$$
	Then for any $\delta>0$, there exists $N_0$  depending only on $K$, the bound of $u_N$ and  the H\"{o}lder exponent of $f$, such that
	\begin{equation}\label{conv}
	|u_N - \bar{u}| \leq \delta \hbox{ in } \{|x|\leq K\} \quad\hbox{ for } N \geq N_0,
	\end{equation}
	where $\bar{u}$ is the unique bounded viscosity solution of 
	$$
	\left\{\begin{array}{lll}
	\bar{F}(D^2 \bar{u})=0 \hbox{ in } \{-K\leq x\cdot\nu \leq 0\};\\ \\
	\nu \cdot D \bar{u} = f(x) \hbox{ on } \{x\cdot\nu=0\}, \qquad u=1 \hbox{ on }  \{x\cdot\nu = -K\}.
	\end{array}\right.
	$$
\end{theorem}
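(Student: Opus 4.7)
The plan is to combine Barles--Perthame half-relaxed limits with Evans' perturbed test function method \cite{E}. Set $\e = 1/N$ and consider the upper/lower envelopes $u^*(x)=\mathord{\limsup}^*\, u_N(x)$ and $u_*(x)=\mathord{\liminf}_*\, u_N(x)$. Since the $u_N$ are uniformly bounded, these are well-defined bounded semicontinuous functions. The strategy is to verify that $u^*$ is a viscosity subsolution and $u_*$ a viscosity supersolution of the homogenized Dirichlet--Neumann problem on $S:=\{-K\leq x\cdot\nu\leq 0\}$; Theorem~\ref{thm:comp} then gives $u^*\leq u_*$, so $u^*=u_*=\bar u$, which is the uniform convergence \eqref{conv} and produces $N_0$.

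The Dirichlet condition on $\{x\cdot\nu=-K\}$ transfers via affine barriers $1\pm M((x-x_0)\cdot\nu+K)$ with $M$ large depending on $\|f\|_\infty$ and the ellipticity constants, yielding $u^*\leq 1\leq u_*$ there. In the open interior, the homogenized equation $\bar F(D^2u)=0$ is the content of the classical Evans argument: for a test function $\phi$ touching $u^*$ from above at an interior point $y_0$, the perturbed test function $\phi^\e(x)=\phi(x)+\e^2 v(x/\e)$, with $v$ the bounded periodic corrector associated to $D^2\phi(y_0)$ (whose existence and regularity are guaranteed by (F1)--(F3)), is a strict supersolution of $F(D^2\cdot,x/\e)=0$ on a small ball for $\e$ small, contradicting the local max property of $u^*-\phi$.

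The main obstacle is verifying the Neumann condition on $\Gamma:=\{x\cdot\nu=0\}$. Assume $u^*-\phi$ has a strict local max at $y_0\in\Gamma$ and, for contradiction, that $\bar F(D^2\phi(y_0))>0$ together with $\nu\cdot D\phi(y_0)-f(y_0)>0$. Let $v$ be the periodic corrector for $M:=D^2\phi(y_0)$ and set $\phi^\e(x)=\phi(x)+\e^2 v(x/\e)$. By (F1)--(F3) and continuity of $D^2\phi$, for $\rho$ small and then $\e<\e_0(\rho)$, $\phi^\e$ is a strict supersolution of $F(D^2\cdot,x/\e)=0$ in $B_\rho(y_0)\cap S$. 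On $B_\rho(y_0)\cap\Gamma$, since $|D_yv|$ is bounded uniformly in $\e$,
\[
\nu\cdot D\phi^\e(x)=\nu\cdot D\phi(x)+\e\,\nu\cdot D_yv(x/\e)\geq \nu\cdot D\phi(y_0)-C\rho-C\e,
\]
while H\"older continuity of $f$ gives $f(x)\leq f(y_0)+C\rho^\beta$, where $\beta$ is the H\"older exponent. Choosing $\rho$ small using the strict margin, then $\e$ small, yields $\nu\cdot D\phi^\e>f(x)$ on $B_\rho(y_0)\cap\Gamma$, making $\phi^\e$ a strict supersolution of the full Neumann problem in $B_\rho(y_0)\cap S$. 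Combined with the strict-max property of $u^*-\phi$ at $y_0$ and the $O(\e^2)$ size of the corrector, the definition of $u^*$ forces, for large $N$, a point where $u_N>\phi^\e$ while $u_N<\phi^\e$ on the lateral boundary $\partial B_\rho(y_0)\cap S^\circ$, contradicting comparison with $\phi^\e$. The Neumann step succeeds precisely because the prescribed data $f(x)$ is macroscopic: no boundary corrector is needed, and the $O(\e)$ error from $\e\,\nu\cdot D_yv(x/\e)$ is absorbed by the strict margin via H\"older continuity of $f$. The symmetric argument yields the supersolution property for $u_*$, completing the proof.
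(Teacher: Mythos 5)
Your proof is correct and uses what is the standard (and, I believe, the intended) approach to this result: Barles--Perthame half-relaxed limits combined with Evans' perturbed test function method, with the key observation that no boundary-layer corrector is needed because the Neumann datum $f$ does not oscillate, so the $O(\e)$ error $\e\,\nu\cdot D_y v(x/\e)$ in the conormal derivative of $\phi^\e$ is absorbed by the strict margin in the test-function inequality. The paper itself does not reprove this theorem but cites it from \cite{CK}, and your argument is consistent with that source; the only point worth making explicit is the standard technicality that the corrector $v$ is in general only a viscosity solution (so $\phi^\e$ need not be $C^2$), which is handled in the usual way following Evans, and that the claimed uniformity of $N_0$ in the data follows from a compactness/contradiction refinement of the half-relaxed-limits step using the uniform $C^{0,\alpha}$ estimates available for the $u_N$.
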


\section{Localization Lemmas } 
In this section we prove several lemmas on perturbing and localizing the solutions, which will be used frequently throughout the paper.
Below we prove a localization lemma, and as a corollary, we prove existence and uniqueness of  solution  $u_\e$ of $(P)_{\e, \nu, \tau, q }$ with $\Pi = \Pi(\nu,\tau)$ for   $\tau \in \R^n$ and   $\nu \in \mathcal{S}^{n-1}$. Denote $B_R(\tau):=\{|x-\tau|\leq R\}$ and recall $ H_s := \{(x-\tau) \cdot \nu  =s\}$.

\medskip
 
 First we state a basic lemma which will be frequently used. The proof is a direct consequence of the oblicity assumption $(G3)$.
 
 \begin{lemma}\label{bound}
There exists $M = M(|q|,c)$ such that $q \cdot x \pm Mx\cdot \nu$ are respectively super and subsolution of $(P)_{\e,\nu,\tau, q}$. 
\end{lemma}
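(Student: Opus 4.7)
The functions $\phi^\pm$ proposed as barriers are affine, so I would verify the super/subsolution inequalities directly. For notational convenience I would work with $\phi^\pm(x) := q\cdot x \pm M\bigl[(x-\tau)\cdot\nu + 1\bigr]$, which differs from the stated $q\cdot x \pm Mx\cdot\nu$ only by an additive constant (independent of $x$) and has the mild advantage that $\phi^\pm \equiv q\cdot x$ exactly on the Dirichlet face $H_{-1}$, yielding the required one-sided inequalities there with equality. The interior PDE is immediate: since $D^2\phi^\pm \equiv 0$ and $F$ is $1$-homogeneous by $(F2)$, we have $F(D^2\phi^\pm,x/\e)=F(0,x/\e)=0$, which satisfies both the supersolution and subsolution PDE inequalities.

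The substance of the lemma therefore lives entirely at the Neumann face $H_0$. Using the hypothesis $q\in\langle\nu\rangle^{\perp}$ I compute $D\phi^\pm = q\pm M\nu$ and $\partial_\nu\phi^\pm = \pm M$, so the supersolution inequality for $\phi^+$ reduces to
\[
M \;\geq\; G\!\left(q+M\nu,\tfrac{x}{\e}\right),
\]
and the subsolution inequality for $\phi^-$ to $-M \leq G(q-M\nu,x/\e)$. To bound the right-hand sides I would combine $(G1)$ with $(G3)$: integrating the one-variable map $s\mapsto G(q\pm sM\nu,y)$ over $s\in[0,1]$ and using $|G_p\cdot\nu|\leq c$ gives
\[
\bigl|\,G(q\pm M\nu,y)-G(q,y)\,\bigr|\;\leq\; c\,M,
\]
while $(G1)$ controls $|G(q,y)|\leq \mu_0(1+|q|)$. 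Hence both Neumann inequalities hold provided $(1-c)M \geq \mu_0(1+|q|)$, which is achieved by choosing
\[
M \;=\; \frac{\mu_0(1+|q|)}{1-c},
\]
a quantity depending only on $|q|$ and $c$, as claimed.

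There is no real obstacle in the argument; the only point requiring care is orienting $(G3)$ correctly, i.e.\ recognizing that the strict obliquity $c<1$ is precisely what closes the Neumann inequality by absorbing the tangential variation $cM$ of $G$ into the normal jump $\pm M$ on the left-hand side. Without the strict inequality $c<1$ in $(G3)$, no finite $M$ would suffice, which matches the usual role of obliquity in the well-posedness of such problems.
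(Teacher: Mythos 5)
Your proof is correct and is essentially the argument the paper has in mind; the paper only remarks that the lemma ``is a direct consequence of the oblicity assumption $(G3)$,'' so you have simply supplied the routine details. Two small points worth flagging: (i) you rightly use $(G1)$ as well as $(G3)$ to control $G(q,y)$, which the paper's one-line remark glosses over; and (ii) your normalization $\phi^\pm = q\cdot x \pm M[(x-\tau)\cdot\nu+1]$ (which is what the paper actually uses later, in the proof of Lemma 3.3, where it writes $q\cdot x \pm M(x-\tau+\nu)\cdot\nu$) is the right one, since the bare $q\cdot x \pm Mx\cdot\nu$ of the statement only matches the Dirichlet data $q\cdot x$ on $H_{-1}$ up to a constant whose sign depends on $\tau\cdot\nu$.
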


\begin{lemma}\label{lem:side}
Let $f \in C(\R^n)$ be bounded. Suppose $w_1$ and
$w_2$ solve, in the viscosity sense,
\begin{itemize}
\item[(a)]  $ F(D^2w_i,\frac{x}{\e})= 0$ \quad  in \quad $\Sigma_R:=\Pi \cap B_R(0)$ \quad  for $i=1,2$
\item[(b)]  $\nu \cdot  Dw_i = G(D w_i, \frac{x}{\e})$ \quad   on \quad $H_{0}$ \quad for $i=1,2$
\item[(c)] $w_1=w_2\quad\hbox{ on }\quad H_{-1}$
\item[(d)] $0 \leq w_2-w_1  \leq M$\quad  on  \quad $\Pi\cap \partial B_R(0)$.
\end{itemize}
Let $L:=  \|G_p\|_\infty$ and $0<c<1$ is the constant given in $(G3)$.
Then there exists a constant $C(\frac{\Lambda}{\lambda}, c, L)>0$ such that 
$$w_1 \leq w_2 \leq w_1 + \frac{CM}{(1-c)R} \quad\hbox{ in }\quad \Pi\cap B_1(0).
$$
\end{lemma}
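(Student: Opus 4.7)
The inequality $w_1 \leq w_2$ in $\Sigma_R := \Pi \cap B_R(0)$ is an immediate application of Theorem~\ref{thm:comp} in $\Sigma_R$: both $w_i$ solve the same oblique problem, with $w_1 \leq w_2$ on the Dirichlet-type boundary $H_{-1} \cup (\Pi \cap \partial B_R(0))$ by (c)--(d), and identical Neumann condition on $H_0 \cap B_R(0)$. For the upper bound, set $v := w_2 - w_1 \geq 0$. Since $w_1, w_2$ both solve $F(D^2\cdot, x/\varepsilon) = 0$, $v$ is a viscosity subsolution of $\mathcal{P}^+(D^2 v) = 0$ in $\Sigma_R$, as noted after the definition of $\mathcal{P}^\pm$. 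On $H_0 \cap B_R(0)$, linearizing the Neumann condition via the mean vector
$$
\vec{b}(x) := \int_0^1 G_p\bigl(t\,Dw_2 + (1-t)\,Dw_1,\,x/\varepsilon\bigr)\,dt,
$$
the function $v$ satisfies $\nu \cdot Dv = \vec{b}(x) \cdot Dv$ in the viscosity sense, with $|\vec{b}\cdot \nu| \leq c < 1$ by (G3) and $|\vec{b}| \leq L$ by (G2).

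The plan is to construct an explicit polynomial barrier $\phi \geq v$ on $\Sigma_R$ of the form
$$
\phi(x) = \alpha\, s + \beta\, |\tilde{x}_T|^2 - \gamma\, s^2, \qquad s := (x-\tau)\cdot\nu + 1,\quad \tilde{x}_T := x - (x\cdot\nu)\,\nu,
$$
with nonnegative constants $\alpha, \beta, \gamma$ to be calibrated. A direct computation gives $D^2\phi = 2\beta P_T - 2\gamma\,\nu\otimes\nu$ with $P_T = I - \nu\otimes\nu$, so $\mathcal{P}^+(D^2\phi) = 2\lambda\gamma - 2\Lambda(n-1)\beta \geq 0$ iff $\gamma \geq \Lambda(n-1)\beta/\lambda$. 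On $H_0$ (where $s = 1$), $D\phi = (\alpha-2\gamma)\nu + 2\beta\,\tilde{x}_T$, so the oblique supersolution inequality $\nu\cdot D\phi \geq \vec{b}\cdot D\phi$ reduces to $(\alpha-2\gamma)(1-c) \geq 2\beta L\,|\tilde{x}_T|$, which holds on $H_0 \cap B_R(0)$ (where $|\tilde{x}_T| \leq R$) provided $\alpha \geq 2\gamma + 2\beta L R/(1-c)$. The outer condition $\phi \geq M$ on $\Pi \cap \partial B_R(0)$ follows from $\beta|\tilde{x}_T|^2 \geq \beta R^2/2$ for $R$ large, giving $\beta = 2M/R^2$; the condition $\phi \geq 0$ on $H_{-1}$ ($s = 0$) is automatic since $\phi = \beta|\tilde{x}_T|^2 \geq 0$ there. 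Choosing $\beta = 2M/R^2$, $\gamma = \Theta(M/R^2)$, and $\alpha = \Theta(LM/((1-c)R))$ satisfies all constraints, and a standard viscosity comparison for the linear Pucci equation with oblique Neumann yields $v \leq \phi$ in $\Sigma_R$. Evaluating on $\Pi \cap B_1(0)$, where $|\tilde{x}_T| \leq 1$ and $s \leq 1$, we obtain $\phi \leq \alpha + \beta \leq CM/((1-c)R)$ with $C = C(\Lambda/\lambda, c, L)$, as claimed.

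The main obstacle is the simultaneous calibration of $(\alpha,\beta,\gamma)$: the Pucci inequality ties $\gamma$ to $\beta$, the oblique inequality ties $\alpha$ to $\beta L R/(1-c)$, and the outer boundary condition ties $\beta$ to $M/R^2$, leaving little slack in any direction. The $(1-c)^{-1}$ factor in the final constant arises directly from the oblique inequality, while $\Lambda/\lambda$ enters via the Pucci ratio. A secondary technical point is the rigorous viscosity-sense linearization of the Neumann condition on $H_0$; this is standard via semicontinuous-envelope / doubling-of-variables arguments as in \cite{CIL}, but must be performed carefully because $v$ is not a priori differentiable up to the boundary.
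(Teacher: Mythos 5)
Your barrier $\phi = \alpha s + \beta|\tilde{x}_T|^2 - \gamma s^2$ and its calibration are essentially equivalent to the paper's $\psi := M(h_1+h_2) + C_1 h_3$: both are a positive $R^{-2}$ multiple of $|x'|^2$, minus a comparable multiple of $(x\cdot\nu)^2$ to make the Hessian a supersolution of the Pucci inequality, plus an $R^{-1}$ linear term in $x\cdot\nu$ to dominate the tangential term in the oblique boundary inequality; the $(1-c)^{-1}$ dependence and the $\Lambda/\lambda$ dependence come out the same way. Where you diverge, and where a genuine gap appears, is in the reduction to a linear problem. You pass to the difference $v = w_2 - w_1$, assert $v$ solves $\nu\cdot Dv = \vec{b}\cdot Dv$ on $H_0$ in the viscosity sense with $\vec{b}(x) = \int_0^1 G_p(t\,Dw_2 + (1-t)\,Dw_1, x/\e)\,dt$, and invoke a linear comparison. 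But $\vec{b}$ presupposes that $Dw_1(x)$ and $Dw_2(x)$ exist pointwise on $H_0$, which is not available at the level of generality of Lemma~\ref{lem:side}: this lemma is used in Lemma 3.3 to establish existence and uniqueness of $u_\e$ before any boundary regularity is known, and the Lipschitz/$C^{1,\alpha}$ estimate of Theorem~\ref{thm:reg2} requires additional structure (convexity of $F$ or linearity of $G$) that Lemma~\ref{lem:side} does not assume. The paper's Lemma~\ref{perturbation}, which really does use this kind of linearization, explicitly imports the hypotheses of Theorem~\ref{thm:reg2} for exactly this reason; Lemma~\ref{lem:side} deliberately avoids it.

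The repair is not the doubling-of-variables machinery you allude to but something simpler and more conceptual, which is what the paper does: do not linearize at all. Compare $w_2$ directly with $w := w_1 + \phi$ under the fully nonlinear comparison Theorem~\ref{thm:comp}. Verifying that $w$ is a viscosity supersolution of the Neumann condition $\nu\cdot Dw \geq G(Dw, x/\e)$ on $H_0$ uses only the viscosity formulation of the Neumann condition for $w_1$ together with $(G2)$ and $(G3)$: if $w - \varphi$ has a local minimum at $x_0 \in H_0$ for a smooth $\varphi$, then $w_1 - (\varphi - \phi)$ does too, the Neumann inequality for $w_1$ gives $\nu\cdot D(\varphi - \phi)(x_0) \geq G(D(\varphi - \phi)(x_0), x_0/\e)$, and the Lipschitz and oblicity hypotheses then reduce the required inequality to $(1-c)\,\nu\cdot D\phi(x_0) \geq L\,|D_T\phi(x_0)|$, which is precisely your calibration $\alpha - 2\gamma \geq 2\beta L R/(1-c)$. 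Your $\phi$ works as-is in this framework; only the framing needs to change.
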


\begin{proof}
Without loss of generality, let us set $\nu =e_n$ and $\tau=0$. The
first inequality, $w_1\leq w_2$, directly follows from Theorem~\ref{thm:comp}.
 To show the second inequality, let $$w := w_1+ M(h_1+h_2)+ C_1h_3,$$ where
$$
h_1 (x) = \frac{|x|^2}{R^2} ,\quad  h_2 (x) =
\frac{C}{R^2} (1-(x_n)^2) \hbox{ with } C = \dfrac{n\Lambda}{\lambda}, \quad
h_3(x) = \frac{1+x_n}{R}, 
$$
 and   $C_1>0 $ is a large constant depending on  $n$, $\Lambda$, $\lambda$, $L$ and $c$, which will be chosen below in the proof.   
 
  Note that in  $\Sigma_R$,
$$
\begin{array}{lll}
F(D^2 w, \frac{x}{\e}) &=& F(D^2 w_1 + M(D^2 h_1 + D^2h_2), \frac{x}{\e})  \\ \\
&\geq& F(D^2w_1, \frac{x}{\e}) - \mathcal{P}^+(M(D^2 h_1 + D^2h_2)) \\ \\ &=&
F(D^2 w_1, \frac{x}{\e}) =0.
\end{array}
$$
Also $w_2 =w_1 \leq w$ on $H_{-1}$ and $w_2 \leq w_1 + M \leq w$  on $ \partial B_R(0) \cap \Pi$.

\medskip

 Hence to show that   $w_2\leq w$, it is enough to show that $\partial_{x_n} w \geq G(Dw, \frac{x}{\e})$ on $H_0$. We will verify that this is true when $C_1$ is sufficiently large.
Observe that in $\Sigma_R$  
\begin{equation} \label{DDD}
|D(h_1+h_2)|\leq 
\frac{C_0}{R} \,\,\hbox { for }\,\, C_0=C_0(n, \Lambda, \lambda).
\end{equation}
Hence on $H_0 \cap \Sigma_R$ we have
$$
\begin{array}{lll}
 \partial_{x_n} w &\geq&
\partial_{x_n} w_1 + \dfrac{C_1}{R} -\dfrac{C_0}{R} 
\\ \\ &=& G(Dw_1, \frac{x}{\e}) + \dfrac{(C_1-C_0)}{R}\\ \\
&\geq& G(Dw, \frac{x}{\e}) - \dfrac{cC_1}{R} + \dfrac{C_0 L}{R} + \dfrac{(C_1-C_0)}{R}
\end{array}
$$
where the last inequality follows from the Lipschitz property of $G$ with (\ref{DDD}), if $C_1 =C_1( n, \Lambda, \lambda , c  )$ is chosen  sufficiently large.  It follows from Theorem~\ref{thm:comp} that  $w_2 \leq w$ in $\Sigma_R$, and we obtain the lemma.
\end{proof}

As a corollary of Lemma~\ref{lem:side}, we prove existence and uniqueness of solutions in strip regions. 
\begin{lemma}
There exists a unique solution $u_\e$ of $(P)_{\e,\nu, \tau, q}$ such that $$\|u_\e- q \cdot x\| \leq M.$$  
\end{lemma}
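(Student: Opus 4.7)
The plan is to combine the linear barriers from Lemma~\ref{bound} with the quantitative localization estimate of Lemma~\ref{lem:side}: the barriers sandwich any candidate within an $M$-neighborhood of $q\cdot x$, and Lemma~\ref{lem:side} then forces two bounded solutions to coincide at every interior point by sending the reference ball to infinity. Concretely, set $u^{\pm}(x) := q\cdot x \pm M[(x-\tau)\cdot\nu+1]$, where $M$ is the constant from Lemma~\ref{bound}; each $u^{\pm}$ differs from the barrier of Lemma~\ref{bound} by the additive constant $\pm M$, and since $F(D^2u,\tfrac{x}{\e})$ and the oblique expression $\partial_\nu u - G(Du,\tfrac{x}{\e})$ depend only on derivatives, $u^+$ is still a supersolution and $u^-$ still a subsolution of the interior PDE and the Neumann condition. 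Both agree with the Dirichlet datum $q\cdot x$ on $H_{-1}$ (where $(x-\tau)\cdot\nu=-1$), and $|u^{\pm} - q\cdot x|\leq M$ throughout $\Pi$.

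For existence, I would run Perron's method on $\Pi$ using $u^{\pm}$ as global sub/supersolutions, setting
$$u_\e(x) := \sup\bigl\{w(x): w \text{ is a bounded viscosity subsolution of } (P)_{\e,\nu,\tau,q} \text{ with } u^- \leq w \leq u^+\bigr\}.$$
The Ishii--Perron construction for oblique problems, based on the comparison principle Theorem~\ref{thm:comp} and the standard stability of sup-envelopes of subsolutions, yields that $u_\e$ is a viscosity solution; by construction $\|u_\e - q\cdot x\|_{L^\infty(\Pi)} \leq M$. As an alternative, one may solve truncated mixed Dirichlet--Neumann problems on $\Pi\cap B_R(\tau)$ with lateral Dirichlet data $q\cdot x$, use Lemma~\ref{lem:side} to show the family is Cauchy on compact subsets, and pass to the limit using the boundary regularity of Theorem~\ref{thm:reg2}.

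For uniqueness, suppose $v_1, v_2$ are two solutions with $\|v_i - q\cdot x\|_\infty \leq M$. Then $\|v_1-v_2\|_\infty\leq 2M$, they agree on $H_{-1}$, and both satisfy the Neumann condition. Given any $x_0\in\Pi$, translate so that a fixed point of $H_0$ near $x_0$ becomes the origin (this only shifts $\tau$, leaving $F,G$ in the same $\mathbb{Z}^n$-periodic class) and apply Lemma~\ref{lem:side} to the pair $(v_1, v_2)$ on $\Pi\cap B_R(0)$ for large $R$; this yields $|v_1-v_2|(x_0) \leq 2CM/\bigl((1-c)R\bigr)\to 0$ as $R\to\infty$, whence $v_1 \equiv v_2$. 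The main technical point is that Lemma~\ref{lem:side} is stated with the ordered condition $0\leq w_2-w_1\leq M$ on $\partial B_R\cap\Pi$, whereas we only have the two-sided bound $|v_1-v_2|\leq 2M$; however, the supersolution $w_1 + M(h_1+h_2) + C_1 h_3$ constructed in the proof of Lemma~\ref{lem:side} uses only the upper bound $w_2-w_1\leq M$ on the lateral boundary, so applying that one-sided estimate to both orderings $(v_1,v_2)$ and $(v_2,v_1)$ delivers the claimed decay.
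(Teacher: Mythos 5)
Your proposal is correct and, in its ``alternative'' form, coincides with the paper's proof: the paper constructs $u_\e$ as a locally uniform limit of solutions $w_R$ of mixed Dirichlet--Neumann problems on $\Sigma_R = \Pi\cap B_R(\tau)$ with lateral data $q\cdot x$, uses Lemma~\ref{bound} plus comparison to get the uniform bound $|w_R - q\cdot x|\leq M$, and derives uniqueness from Lemma~\ref{lem:side} by sending $R\to\infty$. Your primary route (Perron's method with $u^\pm(x)=q\cdot x \pm M[(x-\tau)\cdot\nu+1]$ as global barriers) is a legitimate variant; what the paper's approximation scheme buys, beyond existence, is that the interior Lipschitz estimate of Theorem~\ref{lemma-reg0} applies uniformly to the truncated family so Arzela--Ascoli gives a limit with no extra work, whereas Perron's envelope requires separately invoking the stability and half-relaxed-limit machinery to verify it is a solution. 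The uniqueness argument you give is the paper's argument, and your remark about the one-sided hypothesis $0\leq w_2-w_1\leq M$ in Lemma~\ref{lem:side} is a real observation: the paper silently applies that lemma to $u_1-u_2$ which is a priori only two-sidedly bounded by $2M$, and your fix --- note that the supersolution barrier $w_1 + M(h_1+h_2)+C_1h_3$ only uses the upper bound on $\partial B_R\cap\Pi$, so one runs the estimate twice with the roles of $v_1,v_2$ swapped --- is exactly what is needed and worth spelling out.
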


\begin{proof}
1. Let $\Sigma_R$ be as given in Lemma~\ref{lem:side}, and consider
the viscosity solution $w_R(x)$ of $(P)_{\e,\nu, \tau,q}$ in $\Sigma_R$ 
 with the lateral boundary data $q \cdot x$  on $\partial B_R(\tau)\cap
 \Pi$.
 The existence and uniqueness of the viscosity solution $w_R$ is shown,
for example, in \cite{CIL} and \cite{il}.

From Lemma 3.1, $q \cdot x \pm M (x-\tau+\nu)\cdot \nu$ is  respectively a sub and supsersolution of $(P)_{\e,\nu, \tau, q}$, and thus by comparison principle we obtain that
$$
|w_R(x) - q \cdot x | \leq M\hbox{ for } x\in \Sigma_R.
$$
  Due to Theorem 2.5 and the Arzela-Ascoli Theorem,
$w_R$ locally uniformly converges to a continuous function
$u_\e(x)$.  From the stability property of viscosity solutions it
follows that $u_\e(x)$ is a viscosity solution of $(P)_{\e,\nu,\tau, q}$.

\vspace{5pt}

2. To show uniqueness, suppose $u_1$ and $u_2$ are both viscosity
solutions of $(P)_{\e,\nu, \tau, q}$ with $|u_1- q \cdot x|, |u_2- q \cdot x| \leq M$. Then
Lemma~\ref{lem:side} yields that, for any point $s\in H_{0}$
$$
|u_1-u_2| \leq O(1/R) \quad\hbox{ in }\quad B_1(s)\cap \Pi.
$$
Hence $u_1=u_2$.
\end{proof}

The following is immediate from Theorem~\ref{thm:comp} and the
construction of $u_\e$ in the above lemma.

\begin{corollary}\label{thm:cp2}
Suppose $u,v$ are bounded and continuous functions in
$\overline{\Pi} = \overline{\Pi (\tau,\nu)}$. In addition suppose they satisfy, for $F$ satisfying $(F1)-(F3)$ and $G$ satisfying $(G_1)-(G_2)$, 
\begin{itemize}
\item[(a)] $F(D^2 u, \frac{x}{\e}) \leq 0 \leq F(D^2v, \frac{x}{\e}) \quad\hbox{  in } \quad\Pi$;
\item[(b)] $u \leq v \quad\hbox{ on }\quad H_{-1}$;
\item[(c)]  $\nu \cdot Du \leq G(Du, x/\e)
; \quad  \nu \cdot D v \geq
G(Dv, x/\e)$ on $H_{0}.$
\end{itemize}

 Then $u \leq v$ in $\Pi$.
\end{corollary}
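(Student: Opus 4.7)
The plan is to reduce the comparison claim on the unbounded strip $\Pi$ to a family of bounded subdomains $\Sigma_R:=\Pi\cap B_R(\tau)$, $R\to\infty$, using the barrier already built in the proof of Lemma~\ref{lem:side} to absorb the a priori uncontrolled gap between $u$ and $v$ on the lateral boundary $\Pi\cap \partial B_R(\tau)$. Set $M_0:=\|u\|_\infty+\|v\|_\infty<\infty$, so that $|u-v|\le 2M_0$ everywhere in $\overline\Pi$. Fix an arbitrary $x_0\in\Pi$; it suffices to show $u(x_0)\le v(x_0)$.

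For $R>|x_0-\tau|$, define
$$\tilde v(x):= v(x) + 2M_0\bigl(h_1(x)+h_2(x)\bigr) + C_1\, h_3(x),$$
where $h_1,h_2,h_3$ are the barrier functions from the proof of Lemma~\ref{lem:side} (centered at $\tau$), and $C_1=C_1(n,\lambda,\Lambda,c,L,M_0)$ is chosen large. The calculations of Lemma~\ref{lem:side} transfer verbatim and give two things. First, since $h_3$ is affine in $x$ and $\mathcal{P}^-\bigl(D^2(h_1+h_2)\bigr)\ge 0$, uniform ellipticity yields $F(D^2\tilde v,x/\e)\le F(D^2 v,x/\e)\le 0$ in the viscosity sense in $\Sigma_R$, so $\tilde v$ remains a viscosity supersolution of the PDE. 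Second, since $\nu\cdot D(h_1+h_2)=0$ on $H_0$, $\nu\cdot Dh_3=1/R$, and $|D(h_1+h_2)|\le C_0/R$ in $\Sigma_R$, assumptions $(G2)$ and $(G3)$ give
$$\nu\cdot D\tilde v \;\ge\; G(Dv,x/\e)+\tfrac{C_1}{R} \;\ge\; G(D\tilde v,x/\e)+\tfrac{(1-c)C_1-2M_0 L C_0}{R}\;>\;G(D\tilde v,x/\e)$$
on $H_0\cap B_R(\tau)$, once $C_1>2M_0 L C_0/(1-c)$.

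It remains to verify that $\tilde v\ge u$ on the Dirichlet pieces of $\partial\Sigma_R$. On $H_{-1}\cap B_R(\tau)$ the barriers are nonnegative, so $\tilde v\ge v\ge u$ by hypothesis~(b). On $\Pi\cap\partial B_R(\tau)$ we have $h_1\equiv 1$, so $\tilde v\ge v+2M_0\ge u$. Applying the bounded-domain comparison principle Theorem~\ref{thm:comp} to the pair $(u,\tilde v)$ on $\Sigma_R$ yields $u\le\tilde v$ in $\Sigma_R$, in particular at $x_0$. Since $h_1(x_0)+h_2(x_0)\to 0$ and $h_3(x_0)\to 0$ as $R\to\infty$, we conclude $u(x_0)\le v(x_0)$, which proves the corollary. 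The only real obstacle is the unboundedness of $\Pi$, and the barrier from Lemma~\ref{lem:side} is precisely tailored to handle this; once the correct test function is identified, the rest is bookkeeping.
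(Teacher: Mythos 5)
Your proof is correct and is exactly the argument the paper's one-line pointer (``immediate from Theorem~\ref{thm:comp} and the construction of $u_\e$ in the above lemma'') is gesturing at: localize to $\Sigma_R$, lift $v$ by the Lemma~\ref{lem:side} barrier so that $(u,\tilde v)$ satisfies the hypotheses of the bounded-domain comparison Theorem~\ref{thm:comp}, then send $R\to\infty$. One sign slip in the PDE check: from hypothesis~(a) you have $F(D^2v,x/\e)\ge 0$, and the extremal-operator inequality $F(M+N,y)\ge F(M,y)+\mathcal{P}^+(N)$ together with $\mathcal{P}^+\bigl(D^2(h_1+h_2)\bigr)\ge 0$ gives $F(D^2\tilde v,x/\e)\ge F(D^2v,x/\e)\ge 0$, which is the chain that makes $\tilde v$ a supersolution (you wrote the reversed inequalities, and invoked $\mathcal{P}^-$ where $\mathcal{P}^+$ is the one that does the work).
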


\begin{lemma}\label{perturbation}
There exists $C>0$ such that the following holds: let $u_1$ and $u_2$ be solutions of 
$$
\left\{\begin{array}{lll}
\mathcal{P}^+(D^2u_1)\leq 0, \,\,\,\, \mathcal{P}^-(D^2u_2) \geq 0 &\hbox{ in }& \Pi \cap B_R(0)\\ \\
\partial_\nu u_i = G_i(Du_i, x) &\hbox{ on }&H_0 \cap B_R(0) \\ \\
u_i = q \cdot x &\hbox{ on } & H_{-1} \cap B_R(0) 
\end{array}\right.
$$
where $\Pi = \Pi(\nu,0)$.  
Furthermore suppose that $G_i$ satisfies the assumption in Theorem~\ref{thm:reg2} and $G_1$ and $G_2$ satisfy 
\begin{equation}\label{Lipschitz}
|G_1(p,x) -G_2(p,x)| \leq \delta(1+|p|) \hbox{  and }|u_1-u_2|\leq M.
\end{equation}
 Let $L$ denote the Lipschitz bound for $u_i$ and $G_i's$. Then there exists $C = C(\Lambda, \lambda, n)$ such that  
$$
|u_1-u_2| \leq  \delta(L+1)+CM/R\hbox{ in }  \Pi \cap B_1(0).
$$ 
\end{lemma}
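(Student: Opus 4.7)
The idea is to compare $u_1$ to a carefully chosen upper barrier built from $u_2$:
$$
w(x) := u_2(x) + \beta\,\bigl[(x-\tau)\cdot\nu + 1\bigr] + M(h_1 + h_2) + C_1 h_3,
$$
where $h_1,h_2,h_3$ are the explicit spatial barriers from the proof of Lemma~\ref{lem:side}, the constant $C_1 = C_1(n,\Lambda,\lambda,c,L)$ is taken large as in that lemma, and the ``slant'' coefficient $\beta$ is chosen of size $\delta(L+1)/(1-c)$ to absorb the Neumann data mismatch. Roughly, $\beta h$ handles the boundary-condition perturbation, while $M(h_1+h_2)+C_1 h_3$ handles the lateral gap of size $M$ on $\partial B_R(0)\cap \Pi$.

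First I would check the fixed boundaries. On $H_{-1}$ the slant term vanishes, $u_1 = q\cdot x = u_2$, and $h_i\geq 0$, so $w\geq u_1$; on $\partial B_R\cap\Pi$ the $h_1$ term is $1$, so $M h_1=M$ dominates $u_1-u_2\leq M$ and again $w\geq u_1$. For the interior Pucci inequality, the slant term is affine and contributes no second derivatives; the barriers $M(h_1+h_2)+C_1 h_3$ are tuned exactly so that the Pucci supersolution property survives (this is the content of the computation in Lemma~\ref{lem:side}), so $w$ is a Pucci supersolution of the operator for which $u_1$ is a subsolution.

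The main technical step is the oblique boundary computation on $H_0$. Using $\partial_\nu u_2 = G_2(Du_2,x)$, the pointwise bound $|G_1-G_2|\leq \delta(1+|p|)$ at $p=Du_2$ (with $|Du_2|\leq L$), the obliqueness $|G_{1,p}\cdot\nu|\leq c$ for the $\beta\nu$ contribution, and the Lipschitz bound $L$ on $G_1$ in the tangential directions for the $D\Phi$ contribution, one obtains
$$
\partial_\nu w - G_1(Dw,x) \;\geq\; (1-c)\beta - \delta(L+1) + \bigl(\partial_\nu\Phi - L|D\Phi|\bigr).
$$
The choice $\beta=\delta(L+1)/(1-c)$ cancels the first two terms, and choosing $C_1$ large enough makes $\partial_\nu\Phi - L|D\Phi|\geq 0$, exactly as in Lemma~\ref{lem:side}. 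This is the balance I expect to be the most delicate step: the obliqueness constant $c$ and the Lipschitz bound $L$ have to be juggled simultaneously so that $\beta$ is no larger than $O(\delta(L+1))$ while the tangential contribution from the spatial barrier is still dominated.

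After applying the Pucci–Neumann comparison (the variant of Corollary~\ref{thm:cp2} where $u_1$ is a $\mathcal{P}^+$-subsolution and $w$ plays the role of the $\mathcal{P}^+$-supersolution), we get $u_1 \leq w$ in $\Pi\cap B_R$. Evaluating in $B_1$: the slant contribution is at most $\beta = O(\delta(L+1))$ and the spatial barrier is at most $O(M/R)$, so $u_1 - u_2 \leq \delta(L+1) + CM/R$. The reverse inequality comes from the symmetric construction applied with the roles of $u_1, u_2$ swapped (using that in the intended applications both Pucci signs are available for each $u_i$, as they arise as solutions of a common $F$). Combining the two gives $|u_1-u_2|\leq \delta(L+1) + CM/R$ on $\Pi\cap B_1$, with the $1/(1-c)$ prefactor absorbed into the generic Lipschitz constant $L$.
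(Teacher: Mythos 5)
Your outline matches the paper's: build an additive barrier with a normal slant term of size $O(\delta(L+1)/(1-c))$ to absorb the Neumann mismatch, plus quadratic lateral terms to absorb the size--$M$ gap on $\partial B_R$, and then rule out a positive maximum of the difference by deriving a contradiction at $H_0$ using the obliqueness constant $c$ together with the fact that tangential derivatives coincide at the maximum. The paper does this by comparing the rescaled difference $v = (u_1-u_2)/M$ with an explicit barrier $w$ that does not involve $u_2$; you compare $u_1$ directly with $u_2 + \Phi$. Since $u_1 \leq u_2 + \Phi$ is the same inequality as $u_1 - u_2 \leq \Phi$, the boundary-maximum step you describe is essentially the paper's computation, up to bookkeeping (a $(1-c)$ factor on $\partial_\nu\Phi$ is dropped, and the $C_1$ you take ``as in Lemma~\ref{lem:side}'' must in fact grow like $LM/(1-c)$, which is consistent with the $O(M/R)$ you eventually claim).

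There is, however, a real gap in the interior step. You assert that $w = u_2 + M(h_1+h_2)+C_1h_3 + \beta[(x-\tau)\cdot\nu+1]$ is a $\mathcal{P}^+$-supersolution because ``the Pucci supersolution property survives'' the addition of the explicit barriers. Under the lemma's hypotheses this is false: the only information on $u_2$ is $\mathcal{P}^-(D^2 u_2)\geq 0$, and since $\mathcal{P}^+\leq\mathcal{P}^-$ pointwise this gives no lower bound on $\mathcal{P}^+(D^2 u_2)$ (e.g. a Hessian with $\mathcal{P}^- = 0$ can have $\mathcal{P}^+ < 0$). Thus $\mathcal{P}^+(D^2 w)\geq 0$ does not follow, and there is no comparison principle pairing a $\mathcal{P}^+$-subsolution $u_1$ against a function that is only a $\mathcal{P}^-$-supersolution plus a smooth barrier. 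The computation you cite from Lemma~\ref{lem:side} does not transfer either: there the barrier is added to a genuine solution $w_1$ of $F(D^2\cdot,\frac{x}{\e})=0$, and the inequality $F(M+N,y)\geq F(M,y)+\mathcal{P}^+(N)$ uses the full structure of $F$, which $u_2$ is not assumed to solve. The correct route, and the one the paper actually takes, is to first invoke the Caffarelli--Cabr\'{e} difference theorem to conclude that $u_1-u_2$ is itself a $\mathcal{P}^+$-subsolution (this is the content of the sentence ``$v$ satisfies $\mathcal{P}^+(D^2 v)\leq 0$'' opening the paper's proof), and then compare that function with the explicit barrier, which is a strict $\mathcal{P}^+$-supersolution by construction. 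With this replacement for your interior justification, the rest of your argument goes through.
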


\begin{proof}

By our assumption, $v := (u_1-u_2)/M$ satisfies $|v|\leq 1$ in $B_R(0)$ with 
$$
\left\{\begin{array}{lll}
\mathcal{P}^+(D^2v) \leq 0 &\hbox{ in }& \Pi \cap B_R(0)\\
v=0 &\hbox{ on } & H_{-1} \cap B_R(0)
\end{array}\right.
$$

After a change of coordinates we may assume  $\nu=e_n$ so that $\Pi = \{x: -1\leq x_n \leq 0\}$, and we denote $x = (x', x_n)$. Define 
$$
w(x):= (c_0/M+c_1/R) (x_n+1)+  2(|x'|^2 - \dfrac{\Lambda n}{\lambda} (|x_n|^2-1)) /R^2.
$$ 
where $c_0$ and $c_1>8$ will be chosen later.
Then  $w$ is a supersolution of above problem with the Neumann boundary condition 
$$
\partial_n w = (c_0/M + c_1/R)  \geq (c_0/M+ 4|x'|/R^2) = (c_0/M+|D_Tw|) \quad \hbox{ on } \{x_n=0 \} \cap B_R(0).
 $$
  Now suppose $v-w$ has positive maximum in $\Pi \cap \overline{B_R}(0)$. Then the maximum would need to be achieved at a point $\tau \in H_0 \cap B_R(0)$. At this point we should have $\partial_n (v-w)  \geq 0$ and $D_T v = D_T w$.  Therefore 
 \begin{equation}\label{inequality}
\partial_n v \geq \partial_n w \geq ( c_0/M +|D_Tw|) =(c_0/M + |D_Tv|) \hbox{ at } x=\tau, 
 \end{equation}
 On the other hand
 $$
 G_1(Du_1,x) - G_2(Du_2,x) = DG_1 (p^*,x) \cdot D (Mv) + G_1(Du_2,x) - G_2(Du_2,x),                                      
 $$
 and since $|DG_1(p^*,x) \cdot e_n| \leq c $ we have, from \eqref{Lipschitz} and the Lipschitz bound for $u_i$ given in Theorem~\ref{thm:reg2},
  $$
 (1-c)\partial_n v \leq L|D_T v| +\frac{1}{M}|G_1(Du_2,x) - G_2(Du_2,x)| \leq 
  L|D_Tv| +  \frac{\delta}{M}(L+1)  \hbox{ at } x=\tau.
 $$ 
  
Then using the fact that $|D_Tw| = 4|x'|/R^2\leq 4/R$ in $B_R(0)$ it follows that 
\begin{equation}
(1 - c) |\partial_n v| \leq \frac{4L}{R} +  \frac{\delta(L+1)}{M}.
\end{equation}
Hence  from \eqref{inequality} we get a contradiction if $ c_0/M + c_1/R $ is larger than the right handside of \eqref{above}. This happens if we choose $c_1 > 4L$ and $c_0 = \delta(L+1)$. Therefore it follows that  $v\leq w$ in $\Pi \cap \overline{B_R}$. We can now conclude that 
$$
u_1- u_2 = Mv \leq c_0+c_1M/R +  2M(1 + \dfrac{\Lambda n}{\lambda} ) /R^2
 \quad \hbox { in }  \Pi\cap B_1(0).
$$
The lower bound can be obtained with above argument applied to $u_2-u_1$. 
\end{proof}

\section{Homogenization in a Strip Domain}


Let
$u_\e$  solve $(P)_{\e,\nu,\tau,q}$ given in Section 1.1. Then there is a unique linear function $v_\e$ 
such that  $v_\e = u_\e $  on  $H_{-1} \cup \{z_0\}$, where  $z_0: = \tau - \nu/2 $ is a fixed reference point in $\Pi$.  
We define the average slope  $\mu(u_\e)$ of  $u_\e$ as follows
\begin{equation}\label{average}
\mu(u_\e):=  \partial_{\nu} v_\e.
\end{equation}

\begin{theorem} \label{main-cell-2} 
	The followings hold for $u_\e$ solving $(P)_{\e,\nu,\tau,q}$:
	\begin{itemize}
		\item[(a)] For irrational directions $\nu$, there exists a unique constant $\mu= \mu(\nu, q)$ such that $u_\e$  converges uniformly to the linear profile
		$$u(x) := \mu((x-\tau)\cdot \nu + 1) + l(x).$$  The same holds for rational directions $\nu$ with $\tau=0$.
		
		
		\item[(b)] [Error estimate] There exists a constant $C>0$ depending on  $\lambda$, $\Lambda$, $n$, and the slope of $l(x)$ such that the following holds: if $\nu$ is an irrational direction or  $\nu$ is a rational direction with  $\tau=0$,  
		then 
		$$|\mu(u_\e) -\mu|  \leq C \Lambda(\e,\nu) \quad \hbox{in} \quad \Pi,$$		
		where 
		\begin{equation}\label{TheFunction}
		\Lambda(\e,\nu) = \left\{\begin{array}{lll}  \displaystyle{\inf_{0<k<1}}  \{\e^{ k} T_\nu +  \e^{1-k}\} &\hbox { if
		}& \nu \hbox{ is a rational direction} \\ \\
		\displaystyle{\inf_{0<k<1, N \in \N}} \{ \e^{ k}N  + \omega_\nu(N) +  \e^{1-k}\}    &\hbox { if
		}& \nu  \hbox{ is an irrational direction.}
		\end{array}\right.
		\end{equation}
		In (\ref{TheFunction}), 
		$T_\nu$  is as given in (a) of Lemma~\ref{lemma-M} (which is the period of $G(P,y)$ on the Neumann boundary $H_0$) and   $\omega_\nu(N)$ is as given in 
		(\ref{mode}) with  $\omega_\nu(N) \to 0 $ as $N\to \infty$.		
		\item[(c)] 
		Let $\nu$ be an  irrational direction. For any $\delta>0$,  there exist $T>0 $ and $P \in \Z^n$ such that 
		$$|T\nu -P| \leq \delta .$$
		Let $\theta= \theta( \delta, \nu)$ be the angle between $\nu$ and $P$, then $$\Lambda(\e, \nu) \leq 3 \delta \,\, \hbox{ for } \,\,  \e <\delta^2\theta.$$
		\item[(d)] Let $\nu$ be a rational direction, and let $\delta>0$. Then
		$$ \Lambda(\e, \nu) \leq 2\delta \,\, \hbox{ for } \,\, \e<\dfrac{\delta^2}{T_\nu}.
		$$
	\end{itemize}
\end{theorem}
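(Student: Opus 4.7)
The overall strategy is to combine an interior homogenization step, which handles the bulk of the strip $\Pi$ on scales comparable to $\e^{1-k}$, with an ergodic boundary averaging step, which determines the limit slope $\mu$ from the oscillation of $G$ along $H_0$. The balance between these two scales, together with the mode of ergodicity of $\nu$ on $H_0$, will produce the rate function $\Lambda(\e,\nu)$.

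\emph{Step 1: Set-up and a priori control.} After rescaling $y = x/\e$, a solution $w(y) := \e^{-1} u_\e(\e y)$ of $(P)_{\e,\nu,\tau,q}$ becomes a solution of a problem with a \emph{fixed} (non-oscillating) operator $F$ and boundary data $G$, posed on the wide strip $\e^{-1}\Pi$. Lemma~\ref{bound} gives linear barriers forcing $|w - q\cdot y| = O(1/\e)$, and Theorem~\ref{thm:reg2} then supplies a uniform Lipschitz and $C^{1,\alpha}$ bound up to $H_0$. These bounds make the perturbation Lemma~\ref{perturbation} directly applicable.

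\emph{Step 2: The rational case with $\tau = 0$.} When $\nu \in \R\Z^n$, the data of $(P)_{\e,\nu,0,q}$ is invariant under a full rank lattice of translations along $H_0$ of minimal period $T_\nu$ (cf.\ Lemma~\ref{lemma-M}(a)). This reduces the cell problem to the genuinely periodic version $(C)$: existence of a bounded periodic $v$ and uniqueness of its slope $\mu(\nu,q)$ follow from a standard intermediate-value/maximum-principle argument in $\mu$ on a fundamental domain of length $T_\nu$. For the rate in (d), one works on a slab of thickness $\e^k$, applies the interior homogenization statement Theorem~\ref{ext} (contributing $O(\e^{1-k})$), and uses the localization Lemma~\ref{lem:side} together with the $T_\nu$-periodicity to estimate the boundary error by $O(\e^k T_\nu)$. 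Choosing $k = 1/2$ and imposing $\e \le \delta^2/T_\nu$ gives $\Lambda(\e,\nu) \le 2\delta$.

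\emph{Step 3: The irrational case via rational approximation.} For irrational $\nu$, fix $P \in \Z^n$ with $|T\nu - P| \le \delta$ and let $\theta = \theta(\nu,\delta)$ denote the angle between $\nu$ and $P$. On any ball $B_R(\tau) \cap H_0$ with $R \le 1/\theta$, the Neumann data $G(\cdot, y)$ is $T$-periodic in the $P$-direction up to an error $O(R\theta)$, so Lemma~\ref{perturbation} shows that the slope of $u_\e$ agrees, up to $O(\delta)$, with the slope produced by a rationally approximated problem of the type handled in Step~2. The quantitative Weyl equidistribution bound, measured by the discrepancy $\omega_\nu(N)$ via \eqref{mode}, then controls how fast the sampled values of $G$ along the orbit approach the ergodic average over the torus. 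Combining this with the rescaled interior homogenization error $O(\e^{1-k})$ and the sampling error $O(\e^k N)$ on a ball of radius $\e^k N$ yields the formula for $\Lambda(\e,\nu)$ in \eqref{TheFunction}. Specializing $N \asymp 1/\theta$ and choosing $k$ so that $\e^k/\theta \le \delta$ gives (c). Uniqueness of $\mu(\nu,q)$ and its independence of $\tau$ follow because $\Lambda(\e,\nu) \to 0$ as $\e \to 0$, so candidate slopes from different basepoints and different rational approximants form a Cauchy net.

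\emph{Main obstacle.} The essential difficulty lies in Step~3: converting the purely one-dimensional equidistribution rate $\omega_\nu(N)$ on the hyperplane $H_0$ into a quantitative rate for a function $u_\e$ that is neither periodic nor compactly supported. The interior homogenization theorem is only available on balanced subdomains, while the boundary averaging lives on a plane along which $u_\e$ has no a priori spatial structure. Bridging these two requires Lemma~\ref{perturbation} in an essential way, and forces the balance $R \asymp 1/\theta$ that makes the $P$-periodic approximation valid without letting the aperiodicity error blow up. The infimum in \eqref{TheFunction} is precisely what this simultaneous optimization produces.
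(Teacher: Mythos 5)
Your proposal captures the right intuition for the error rate $\Lambda(\e,\nu)$ and the computations in parts (c) and (d) (balancing $\e^k T_\nu$ against $\e^{1-k}$, and choosing $N\asymp 1/\theta$ so that $\omega_\nu(N)\le T\theta\le\delta$), but it has a genuine gap on the central issue of part (a): the existence and uniqueness of the limiting slope $\mu(\nu,q)$ independent of the subsequence and of $\tau$. You assert that uniqueness ``follows because $\Lambda(\e,\nu)\to0$, so candidate slopes \dots form a Cauchy net,'' but this is circular: the statement $|\mu(u_\e)-\mu|\le C\Lambda(\e,\nu)$ presupposes the existence of the unique $\mu$ you are trying to construct. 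What is actually needed, and what the paper proves, is a \emph{two-rate} comparison $|\mu(u_\eta)-\mu(u_\e)|\le C(\Lambda(\e,\nu)+\eta)$ for $0<\eta<\e$, obtained by rescaling $w_\e(x)=\e^{-1}u_\e(\e x)$ and $w_\eta(x)=\eta^{-1}u_\eta(\eta x)$, using Lemma~\ref{lemma-M}(c) to translate both extended strips $\Omega_\e,\Omega_\eta$ so their Neumann boundaries nearly align mod $\Z^n$, invoking Lemma~\ref{perturbation} to absorb the $O(\eta)$ shift in the boundary data, and then pasting the linear bounds from Corollary~\ref{coC2} across the two strips via the barriers $\overline w,\underline w$. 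This comparison is where the ergodicity along $H_0$ and the Lipschitz regularity are essentially used, and it is entirely absent from your outline.

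There is also a smaller misattribution: you say the $O(\e^{1-k})$ term comes from ``applying the interior homogenization statement Theorem~\ref{ext}.'' That theorem is qualitative (it gives $\delta$, not a rate) and is not used in the proof of Theorem~\ref{main-cell-2}. The $\e^{1-k}$ term actually arises from the interior Lipschitz estimate (Theorem~\ref{lemma-reg0}) applied at depth $d=\e^{1-k}$ in the flatness lemma (Lemma~\ref{Claim 2}), which compares $u_\e(x)$ with $u_\e(y)$ for the $\e\Z^n$-equivalent point $y$ supplied by Lemma~\ref{lemma-M}; the remaining strip of thickness $d$ then contributes $Cd=C\e^{1-k}$ by comparison. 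Finally, your ``intermediate-value/maximum-principle argument on a fundamental domain'' for rational $\nu$ is a legitimate alternate route to the \emph{existence} of a cell-problem slope in the periodic case, but the paper does not take it; the paper's argument is uniform over rational and irrational $\nu$ and is built on the flatness lemma rather than a separate periodic cell problem, which is what makes the quantitative rate come out.
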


To prove Theorem~\ref{main-cell-2} we begin with a preliminary lemma.  The following lemma states that $u_\e$ looks like a linear profile  (almost flat) on each  hyperplane normal to $\nu$.   


\begin{lemma} \label{Claim 2}
	Away from the Neumann boundary $H_{0}$, $u_\e -l(x)$ is almost
	a constant on hyperplanes parallel to $H_{0}$. More precisely,
	for 
	$x_0 \in \Pi$ we denote $$d := {\rm dist}(x_0, H_{0}) >0$$
	and $H_{d}:=  \{(x-\tau) \cdot \nu  =d\}= \{(x-x_0) \cdot \nu  =0\}$. 
	Then the followings hold:
	\begin{itemize}
		\item[(a)] If $\nu$ is a rational direction,   there exists a constant $C>0$ depending on $\alpha$, $\lambda$, $\Lambda$, $n$, and the slope of $l$ such
		that for any $x \in H_{d}$
		\begin{equation}
		|(u_\e (x)-l(x))-(u_\e(x_0)-l(x_0))| \leq C(d^{-1}+1)(T_\nu \e).
		\end{equation}
		where $T_\nu$ is a constant depending on $\nu$, given as in  (a)  of Lemma~\ref{lemma-M}.
		
		\item[(b)] If $\nu$ is an irrational direction,
		there exists a constant $C>0$ depending on $\alpha$, $\lambda$, $\Lambda$, $n$ and the
		slope of $l$ such that for any $x \in H_{d}$
		\begin{equation} \label{C2}
		|(u_\e (x)-l(x))-(u_\e(x_0)-l(x_0))| \leq C(d^{-1} \e (MN +  \omega_\nu(N))  + \omega_\nu(N)) 
		\end{equation}
		for any $N \in \N  $ and $\e>0$  with   $\e(MN + \omega_\nu(N))  <1$, where $M$ is a dimensional constant given as in  (b)  of Lemma~\ref{lemma-M}, and $\omega_\nu(N)$ is given as in (\ref{mode}).
	\end{itemize}
\end{lemma}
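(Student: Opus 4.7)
The strategy for both parts is to compare $u_\e$ at $x_0$ with its tangential translate, exploiting $\Z^n$-periodicity of $F$ and $G$. Set $\tau_0 := x-x_0 \in \nu^\perp$ (since $x,x_0\in H_d$) and $\tilde u(y) := u_\e(y+\tau_0) - q\cdot\tau_0$; by tangency of $\tau_0$, a direct verification shows $\tilde u$ satisfies
$$F(D^2\tilde u, \tfrac{y+\tau_0}{\e})=0 \text{ in } \Pi,\quad \partial_\nu \tilde u = G(D\tilde u, \tfrac{y+\tau_0}{\e}) \text{ on } H_0,\quad \tilde u = l \text{ on } H_{-1},$$
and the quantity to bound equals exactly $\tilde u(x_0) - u_\e(x_0)$. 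Because $F$ and $G$ are $\Z^n$-periodic in $y$, we may freely replace $\tau_0 \mapsto \tau_0 - \e P$ for any $P \in \Z^n$, and the question becomes how small we can make this residual while keeping it compatible with the strip geometry.

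For part (a), Lemma~\ref{lemma-M}(a) gives that the tangential sublattice $\nu^\perp \cap \Z^n$ has fundamental domain of diameter controlled by $T_\nu$. Choosing $P \in \nu^\perp \cap \Z^n$ closest to $\tau_0/\e$ yields $r := \tau_0 - \e P \in \nu^\perp$ with $|r| \leq \e T_\nu$. Since $P$ is tangential and integer, translation by $\e P$ preserves the strip and the PDE exactly, so $u_\e(\cdot + \e P) - q\cdot \e P$ solves the same problem $(P)_{\e,\nu,\tau,q}$; by uniqueness of bounded solutions (via Lemma~\ref{lem:side}) it coincides with $u_\e$. Hence $\tilde u(y) = u_\e(y + r) - q\cdot r$, and
$$\tilde u(x_0) - u_\e(x_0) = \bigl[u_\e(x_0 + r) - u_\e(x_0)\bigr] - q\cdot r.$$
The interior gradient estimate (Theorem~\ref{lemma-reg0}) applied on $B_{d/2}(x_0) \subset \Pi$ gives $|Du_\e(x_0)|\leq Cd^{-1}\|u_\e\|_\infty \leq Cd^{-1}$, and combined with $|q\cdot r|\leq |q|\,|r|\leq |q|\e T_\nu$ this yields the claimed $C(d^{-1}+1)\e T_\nu$ bound.

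For part (b), $\nu^\perp \cap \Z^n = \{0\}$, and one uses the \emph{approximate-tangential} integer vectors from Lemma~\ref{lemma-M}(b): for each $N$, vectors $P \in \Z^n$ with $|P|\leq MN$ and $|P\cdot\nu|\leq\omega_\nu(N)$, whose tangent parts $\omega_\nu(N)$-approximate tangent directions on scale $\e MN$. For such $P$ adapted to $\tau_0$, one writes $\tau_0 = \e P + r$ with $|r|\leq \e(MN+\omega_\nu(N))$ tangentially and $|\e P\cdot\nu|\leq \e\omega_\nu(N)$. The translate $\hat u(y) := u_\e(y+\e P) - q\cdot\e P$ satisfies, by $\Z^n$-periodicity of $F$ and $G$, the same $F$-equation and the same Neumann condition, but on a strip whose Neumann boundary is shifted from $H_0$ by $\e P\cdot\nu = O(\e\omega_\nu(N))$. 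Comparing $u_\e$ with $\hat u$ via Lemma~\ref{perturbation}: their Neumann data on a common boundary differ only through the argument shift $s := \tau_0/\e - P$ of size $\leq \omega_\nu(N)$, and (G2) gives $|G(p,y/\e)-G(p,y/\e+s)|\leq m(1+|p|)\omega_\nu(N)$; the lemma then yields $|u_\e - \hat u| \leq C\omega_\nu(N) + CM/R$ on a ball around $x_0$. The tangential residual $r$ is handled by the interior gradient estimate exactly as in (a), producing $C(d^{-1}+1)\e(MN+\omega_\nu(N))$. Adding the two contributions yields \eqref{C2}.

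The main obstacle is part (b): the approximating shift $\e P$ is no longer an exact symmetry, so $\hat u$ lives on a domain differing from $\Pi$ by a thin layer of thickness $\e\omega_\nu(N)$, and Lemma~\ref{perturbation} must absorb this deviation without amplifying it. When $|\tau_0|$ exceeds $\e MN$ the near-tangential shift must be iterated or packaged as a single combined shift; this must be arranged so that the $\omega_\nu(N)$-errors do not accumulate linearly — the form of \eqref{C2}, with a single $\omega_\nu(N)$ rather than a multiplied one, encodes this careful packaging, which relies on the fine structure of the Weyl-type equidistribution behind Lemma~\ref{lemma-M}(b).
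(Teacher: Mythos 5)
Your part (a) is correct and matches the paper's argument: you translate by an exact tangential lattice vector $\e P\in\e(\nu^\perp\cap\Z^n)$ chosen via Lemma~\ref{lemma-M}(a), use uniqueness to identify the translate with $u_\e$, and control the residual $r=\tau_0-\e P$ with the interior gradient estimate. (Minor imprecision: $\|u_\e\|_\infty$ is not bounded; the gradient estimate must be applied to $u_\e-l$, whose supremum is bounded. Since you then add $|q\cdot r|$ separately this does not affect the final bound.)

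Part (b) contains a genuine gap. You translate by the \emph{full} vector $\e P$ (with $y=x_0+\e P$ from Lemma~\ref{lemma-M}(b)) and then assert that the Neumann data of $u_\e$ and of $\hat u(\cdot)=u_\e(\cdot+\e P)-q\cdot\e P$ ``differ only through the argument shift $s:=\tau_0/\e-P$ of size $\leq\omega_\nu(N)$.'' This is not correct on two counts. First, $s=r/\e$ with $|r|\le \e(MN+\omega_\nu(N))$, so $|s|\le MN+\omega_\nu(N)$, which is not small; you appear to have confused the residual $r$ (of size $\e(MN+\omega_\nu(N))$) with the normal offset $\e P\cdot\nu$ (of size $\e\omega_\nu(N)$). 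Second, $\hat u$ satisfies the \emph{same} Neumann condition $G(\cdot,z/\e)$ as $u_\e$ — the issue is that it satisfies it on the shifted plane $H_0-\e P$, not on $H_0$ — so the picture of two competing Neumann data on a common boundary is not what Lemma~\ref{perturbation} needs, and you acknowledge this domain mismatch in your closing paragraph but never resolve it.

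The paper's resolution, which fixes both issues at once, is to translate only by the \emph{tangential part} of $\e P$. Concretely, let $Q:=\e P-(\e P\cdot\nu)\nu$ (equivalently, set $x_1$ = the projection of $y$ onto $H_d$, so $Q=x_1-x_0$). Since $Q\in\nu^\perp$, the translate $\check u(z):=u_\e(z+Q)-q\cdot Q$ is defined on $\Pi$ itself with the same Dirichlet data, and its Neumann data on $H_0$ is $G(\cdot, z/\e + Q/\e)$. Reducing the shift modulo $\Z^n$ using $P\in\Z^n$, this becomes $G(\cdot, z/\e + (Q-\e P)/\e)$, and $|(Q-\e P)/\e|=|P\cdot\nu|\le\omega_\nu(N)$ by Lemma~\ref{lemma-M}(b). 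Now Lemma~\ref{perturbation} applies cleanly, with $\delta\sim\omega_\nu(N)$ and $R\to\infty$, giving $|\check u-u_\e|\le C\omega_\nu(N)$ in $\Pi$. The two residual pieces $\tilde u-\hat u$ and $\hat u-\check u$ are controlled by the interior gradient estimate: they correspond to translations by $r$ (size $\e(MN+\omega_\nu(N))$) and by the normal part $\e P-Q$ (size $\e\omega_\nu(N)$), respectively, producing the $Cd^{-1}\e(MN+\omega_\nu(N))$ term. Summing gives \eqref{C2}. This is precisely the structure of the paper's proof (there $x_1$ plays the role of your $Q$-translate), and the single, non-accumulating $\omega_\nu(N)$ in \eqref{C2} comes from this one application of Lemma~\ref{perturbation}, not from any iteration.
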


\begin{proof} First, we consider a rational direction  $\nu$. By (a) of  Lemma~\ref{lemma-M}, for any $x \in H_{d}$, there is $y \in H_{d}$ such
	that $|x-y| \leq T_\nu \e$ and $y=x_0$ mod $\e \Z^n$. Then by comparison
	\begin{equation} \label{inwon}
	u_\e(x) = u_\e(x + (y-x_0)) - l(y) + l(x_0).
	\end{equation}
	Hence $u_\e (x_0)= u_\e (y) - l(y) + l(x_0)$ and we get
	$$
	\begin{array} {lll}
	|(u_\e (x)-l(x))-(u_\e(x_0)-l(x_0)) |
	&\leq&
	|u_\e(x)- u_\e(y)| + |l(y)- l(x)|    \\ \\&\leq&
	|u_\e(x)- u_\e(y)| + C T_\nu \e \\ \\ &\leq& C d^{-1} T_\nu \e  +  C T_\nu \e 
	\end{array}
	$$
	where  the third inequality follows from
	Theorem~\ref{lemma-reg0}.
	
	Next,  we consider an irrational direction $\nu$  and let $x \in
	H_{d}$. By (b) of Lemma~\ref{lemma-M}, for any $N \in \N $, there exists $y \in \R^n$
	such that  $|x-y| \leq \e(MN + \omega_\nu(N))$, $y = x_0$  mod $ \e \Z^n$
	and
	\begin{equation}\label{projection}
	{\rm dist}(y, H_{d}) <
	\e \omega_{\nu}(N). 
	\end{equation}
	Observe that 
	$$|(u_\e (x)-l(x))-(u_\e(x_0)-l(x_0))| \leq 
	|u_\e(x) - u_\e (y)| + |(u_\e (y)-l(y)) - (u_\e (x_0)-l(x_0)| +|l(y) -l(x)|$$
	where, from Theorem~\ref{lemma-reg0},
	$$|u_\e(x) - u_\e (y)|  \leq    Cd^{-1} \e (MN +  \omega_\nu(N)).$$ 
	
	Next we project $y$ to $x_1\in H_{d}$ and use Lemma~\ref{perturbation} for $G_1 = G$ and $G_2(p,x) = G_2(p, x+(x_0-x_1)) = G_2(p, x+(y-x_1))$ with $\delta = \omega_{\nu}(N)$ to conclude that 
	$$ |(u_\e (x_0)-l(x_0)) - (u_\e (x_1)-l(x_1))| \leq C\omega_\nu (N).$$   
	
	and then once again use Theorem~\ref{lemma-reg0} with \eqref{projection} to compare $u(y)$ with $u(x_1)$ to conclude that 
	$$
	|(u_\e (y)-l(y)) - (u_\e (x_0)-l(x_0))| \leq C(\omega_\nu (N) +\e)
	$$
	and lastly
	$$|l(y) -l(x)| \leq C|y-x| \leq C\e(MN + \omega_\nu(N)) \leq 
	Cd^{-1} \e (MN +  \omega_\nu(N))
	$$ where the last inequality follows if $\e(MN + \omega_\nu(N))<1$.

\end{proof}

Since  $u_\e$ is flat on each hyperplanes located a constant $d$-away from the Neumann boundary, $u_\e$ can be approximated well by a linear solution as in the following corollary.  The proof of Corollary~\ref{coC2}    follows from  the comparison principle
(Theorem~\ref{thm:comp}) and Lemma~\ref{Claim 2} with $d=\e^{1-k}$.

\begin{corollary}\label{coC2}

For a solution $u_\e$  of $(P)_{\e,\nu,\tau,q}$, let $v_\e$ be the unique  linear function given as in (\ref{average}). 
	 Then  there exists  a constant $C$  depending on  $\lambda$, $\Lambda$, $n$ and the
	slope of $l$ such that for any   $N \in \N$ and $0<k<1$,
	$$
	|u_\e (x)-v_\e(x)| \leq   \left\{\begin{array}{lll}  C( \e^{ k} T_\nu +  \e^{1-k}) &\hbox { if
	}& \nu \hbox{ is a rational direction} \\ \\
	C(\e^{ k}N  + \omega_\nu(N)  + \e^{1-k})    &\hbox { if
	}& \nu  \hbox{ is an irrational direction}
	\end{array}\right.
	$$ and hence
	$$
	|u_\e (x)-v_\e(x)| \leq C \Lambda(\e, \nu) .
	$$
	
\end{corollary}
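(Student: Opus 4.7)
The plan is to combine Lemma~\ref{Claim 2} on the hyperplane $S_d := \{(x-\tau)\cdot\nu = -d\}$ with $d := \e^{1-k}$, a comparison argument against an affine barrier on the sub-strip $\Pi_d := \{-1 \le (x-\tau)\cdot\nu \le -d\}$, and a Lipschitz estimate to control the thin Neumann layer of width $d$. Let $E_\e$ denote the bound supplied by Lemma~\ref{Claim 2} at this choice of $d$, so that $E_\e = C\e^{k} T_\nu$ in the rational case and $E_\e = C(\e^k N + \omega_\nu(N))$ in the irrational case.

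First I would fix a reference point $x_d \in S_d$ and set $C_0 := u_\e(x_d) - l(x_d)$. By Lemma~\ref{Claim 2}, $|u_\e(x) - l(x) - C_0| \le E_\e$ for every $x \in S_d$. Introduce the affine function
\[
\tilde w(x) := l(x) + \tfrac{C_0}{1-d}\bigl((x-\tau)\cdot\nu + 1\bigr),
\]
which equals $l$ on $H_{-1}$, equals $l + C_0$ on $S_d$, and which solves $F(D^2 \tilde w, x/\e) = 0$ by the $1$-homogeneity of $F$ in (F2). Since $|u_\e - \tilde w| \le E_\e$ on $\partial \Pi_d$, the comparison principle (Theorem~\ref{thm:comp} applied via exhaustion by large balls as in Lemma~\ref{lem:side}) gives $|u_\e - \tilde w| \le E_\e$ throughout $\Pi_d$. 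Evaluating at $z_0$, which lies in $\Pi_d$ once $d < 1/2$, and using $v_\e(z_0) = u_\e(z_0)$ yields $|v_\e(z_0) - \tilde w(z_0)| \le E_\e$. Since $v_\e$ and $\tilde w$ are both affine, both equal $l$ on $H_{-1}$, and have linear parts proportional to $(x-\tau)\cdot\nu + 1$ (which equals $1/2$ at $z_0$ and is at most $1$ on $\Pi$), their slopes differ by at most $2E_\e$ and hence $|v_\e - \tilde w| \le 2 E_\e$ on all of $\Pi$. Combining, $|u_\e - v_\e| \le 3 E_\e$ in $\Pi_d$.

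It remains to control $|u_\e - v_\e|$ in the layer $\Pi \setminus \Pi_d$ of width $d$, where Lemma~\ref{Claim 2} degenerates. For $x$ in the layer I would project to $x_{\mathrm{proj}} \in S_d$ along $\nu$ with $|x - x_{\mathrm{proj}}| \le d$ and invoke the Lipschitz bound $\|Du_\e\|_\infty \le L$ up to $H_0$ provided by Theorem~\ref{thm:reg2}, together with $|\mu(u_\e)| \le 2M$ from Lemma~\ref{bound}, to get
\[
|u_\e(x) - v_\e(x)| \le (L + 2M)\, d + |u_\e(x_{\mathrm{proj}}) - v_\e(x_{\mathrm{proj}})| \le (L + 2M)\, d + 3 E_\e.
\]
Substituting $d = \e^{1-k}$ produces $|u_\e - v_\e| \le C(\e^k T_\nu + \e^{1-k})$ in the rational case and $C(\e^k N + \omega_\nu(N) + \e^{1-k})$ in the irrational case; recalling \eqref{TheFunction} and taking the infimum over $k \in (0,1)$ (and $N \in \N$ in the irrational case) yields the final bound $C\Lambda(\e,\nu)$. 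The main obstacle is precisely this Neumann layer: the $\e^{1-k}$ contribution to $\Lambda$ corresponds exactly to its width, and one is forced to invoke Theorem~\ref{thm:reg2} to get a quantitative bound there, which is where the structural hypothesis that $F$ be convex when $G$ is nonlinear enters.
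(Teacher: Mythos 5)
Your proposal is correct and is essentially a careful expansion of the paper's own one-line proof, which simply says ``follows from the comparison principle (Theorem~\ref{thm:comp}) and Lemma~\ref{Claim 2} with $d=\e^{1-k}$.'' Your three-stage argument---use Lemma~\ref{Claim 2} on $H_{-d}$ with $d=\e^{1-k}$, compare against the affine function $\tilde w$ in the Dirichlet strip $\Pi_d$ (with the lateral boundary handled by Pucci-type barriers as in Lemma~\ref{lem:side}), then anchor $v_\e$ to $\tilde w$ through the reference point $z_0$---is exactly how one fleshes out that hint, and your estimate $E_\e = C\e^k T_\nu$ (resp.\ $C(\e^k N + \omega_\nu(N))$) matches Lemma~\ref{Claim 2} after absorbing the lower-order $\e T_\nu$ and $\e^k\omega_\nu(N)$ terms. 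For the Neumann layer $\{-d \le (x-\tau)\cdot\nu \le 0\}$ where Lemma~\ref{Claim 2} degenerates, your use of the boundary Lipschitz bound from Theorem~\ref{thm:reg2} is one clean option; an equivalent and slightly more elementary alternative is to use the sub/supersolution barriers $v_\e(x) \pm CE_\e \pm M''\bigl((x-\tau)\cdot\nu + d\bigr)$ with $M''$ chosen as in Lemma~\ref{bound} to dominate the Neumann condition, which avoids invoking $C^{1,\alpha}$ regularity but gives the same $O(d)=O(\e^{1-k})$ contribution. Either way the constant inherits a dependence on the $(G1)$--$(G3)$ data beyond what the corollary's statement nominally lists, but that dependence is already present through Lemma~\ref{Claim 2}, so this is an imprecision in the paper's bookkeeping rather than a flaw in your argument.
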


 Due to the uniform interior regularity of $\{u_\e\}$ (Theorem~\ref{lemma-reg0}), along a subsequence they locally uniformly converges to  $u$  in  $\Pi$. Let us choose one of the convergent subsequence $u_{\e_j}$ and
denote it by $u_j$, i.e., $u_j =u_{\e_j} $. Let $v_j = v_{\e_j}$ and $\mu_j = \mu(u_{\e_j})$, both as given in \eqref{average}.

\medskip

 Corollary~\ref{coC2} implies that for any $\nu \in \mathcal{S}^{n-1}$,  $\lim u_j$ is linear. More precisely, the slope $\mu_j$ converges  
as $j \to \infty$ (see Lemma 4.1 of \cite{CKL}), and hence by Corollary~\ref{coC2} $$\lim u_j = \lim v_j = \mu((x-\tau)\cdot \nu + 1) + l(x) = u$$ for $\mu := \lim \mu_j$. 

\medskip
 
Next, we prove that the subsequential limit is unique, i.e., $\mu$ does not depend on the subsequence $\{\e_j\}$ when $\nu$ is irrational or $\nu$ is rational with $\tau=0$.  We will also obtain a mode of convergence of $\mu_\e$.

\medskip

\noindent {\bf Proof of Theorem~\ref{main-cell-2} (a) and (b) for irrational directions:}  

\medskip

Let $\nu$ be an irrational direction and let $u$ be a subsequential limit of $u_\e$. We claim that  
 $$\partial u /
	\partial \nu  =\mu(\nu,q)$$ for a constant $\mu(\nu,q)$ which depends on
	$\nu$ and $q$, not on $\tau$ or the subsequence $\{\e_j\}$. More precisely, 
	\begin{equation}\label{error2}
	|\mu(u_\eta) -\mu(u_\e)| \leq C(\Lambda(\e, \nu) +\eta).
	\end{equation}

\medskip

For the proof of (\ref{error2}), let $0<\eta < \e$ be sufficiently small. Let $$w_{\e}(x) = \displaystyle{\frac{u_\e(\e
			x)}{\e}}, \,\,\,\,\,  w_\eta(x)=\displaystyle{ \frac{u_\eta(\eta
			x)}{\eta}} $$ and denote by $H^1$ and $H^2$,  the
	corresponding Neumann boundaries of $w_{\e}$ and $w_{\eta}$,
	respectively. By (c) of Lemma~\ref{lemma-M}, for $\tau \in
	\R^n$, there exist $s_1 \in H^1$ and $s_2 \in H^2$ such
	that
	$$|\tau-s_1| \leq \eta \,\,\hbox{ mod } \Z^n, \,\,\hbox{ and }\,\,|\tau-s_2| \leq \eta \,\,\hbox{ mod } \Z^n.$$
	Hence after translations by $\tau-s_1$ and $\tau-s_2$, we may suppose that
	$w_{\e}(x)$ and
	$w_\eta(x)$ are  defined  on the extended strips
	$$
	\Omega_\e:=\{x: -\frac{1}{\e} \leq (x-\tau)\cdot\nu \leq 0\}
	\quad \hbox{ and } \quad 
	\Omega_\eta:=\{x: -\frac{1}{\eta} \leq (x-\tau)\cdot\nu \leq 0\}
	$$
	respectively, with 
	$$w_\e=l_\e(x) \quad \hbox{ on } \quad \{(x-\tau)\cdot\nu = -\frac{1}{\e}\}$$ and
	$$w_\eta = l_\eta(x) \quad \hbox{ on } \quad  \{(x-\tau)\cdot\nu=-\frac{1}{\eta}\}$$ where
	$l_\e$ and $l_\eta$ are linear functions with the same slope as
	$l(x)$. Moreover on $H_{0}$ we have
	$$
	\partial w_\e /\partial \nu = G(Dw_\e ,  x-z_1 ) \quad \hbox{ and } \quad \partial w_\eta
	/\partial \nu = G(Dw_\eta ,  x-z_2 ) 
	$$ for some $|z_1|,|z_2|
	\leq \eta$.  Observe that by  H\"{o}lder continuity of $G$, i.e., by (G2) 
	\begin{equation}\label{g22}
	| G(p, x -z_1) - G(p,x-z_2) | <m(1+|p|)\eta
	\end{equation}
	
	\medskip
	
	 Let $v_\e$ be given in \eqref{average}. Then
	by Corollarly~\ref{coC2} (after a translation),
	\begin{equation}\label{above0}
	|w_\e(x)-\frac{v_\e(\e x)}{\e}| \leq \frac{C\Lambda(\e, \nu )}{\e}.
	\end{equation}
	\vspace{10pt}
	Note that
	$$
	\dfrac{v_\e(\e x)}{\e}=  \mu_\e ((x-\tau)\cdot \nu +\dfrac{1}{\e}))
	+l_\e(x).
	$$
	From \eqref{above0} and the comparison principle, it follows that
	\begin{equation}\label{above}
	(\mu_\e-C\Lambda(\e, \nu))((x-\tau)\cdot\nu  + \frac{1}{\e})\leq w_\e(x)
	-l_\e(x) \leq (\mu_\e +C\Lambda(\e, \nu))((x-\tau)\cdot\nu +\frac{1}{\e}).
	\end{equation}
	Here we denote by $l_1$ and $l_2$, the following linear profiles 
	$$
	l_1(x) = a_1(x-\tau)\cdot\nu+b_1\hbox{ and
	}l_2(x)=a_2(x-\tau)\cdot\nu+b_2,
	$$ whose respective slopes are
	$a_1=\mu_\e+C\Lambda(\e, \nu)$ and $a_2=\mu_\e - C\Lambda(\e, \nu)$.   $b_1$
	and $b_2$ are chosen so that
	\begin{equation} \label{l1l2}
	l_1 (x) =l_2 (x)= \omega_{\eta}(x) -l_\eta(x) =0 \,\,\hbox{ on
	}\,\, \{x:(x-\tau)\cdot\nu= -\frac{1}{\eta}\}.
	\end{equation}
	
\medskip
	
	 Now we define
	$$
	\overline{w}(x): = l_\eta(x) + \left\{\begin{array} {lll} l_1(x)
	&\hbox{ in } &
	\{-1/\eta \leq(x-\tau)\cdot \nu \leq -1/\e\}\\ \\
	w_\e(x)-l_\e(x)+c_1  &\hbox{ in }& \{-1/\e \leq(x-\tau)\cdot \nu \leq
	0\}
	\end{array}\right.
	$$
	and
	$$
	\underline{w}(x): = l_\eta(x) + \left\{\begin{array} {lll} l_2(x)
	&\hbox{ in } &
	\{-1/\eta \leq(x-\tau)\cdot \nu \leq -1/\e\}\\ \\
	w_\e(x)-l_\e(x) +c_2  &\hbox{ in }& \{-1/\e \leq(x-\tau)\cdot \nu \leq
	0\}
	\end{array}\right.
	$$
	where $c_1$ and $c_2$ are constants satisfying $$l_1=w_\e-l_\e+c_1 =c_1 \hbox{ and } l_2=w_\e -l_\e+c_2=c_2$$
	on
	$\{(x-\tau)\cdot\nu= -1/\e\}$. (See Figure 2.)
	Note that by \eqref{l1l2}, 
	$$ \underline{w} =\overline{w} = w_\eta \quad \hbox{ on } \quad \{x:(x-\tau)\cdot\nu= -\frac{1}{\eta}\}$$
	and also due to \eqref{above},  
	$$
	\overline{w}(x)= l_\eta(x)+ \min (l_1(x), w_\e(x) -l_\e (x)+ c_1)$$ and $$\underline{w}(x) = l_\eta(x)+ \max ( l_2(x), w_\e(x)  -l_\e (x)+c_2)
	$$ 
	in $\{-\frac{1}{\e}\leq (x-\tau)\cdot\nu \leq 0\}$. 
	Thus it follows that $\overline{w}$ and $\underline{w}$ are respectively viscosity super- and subsolution of (P). Hence we obtain 
	\begin{equation} \label{g33}
	\underline{w} \leq \tilde{w}_\eta \leq \overline{w} 
	\end{equation}
	where $\tilde{w}_\eta$ is a solution of (P) in $\Omega_\eta$ with $\tilde{w}_\eta = w_\eta = l_\eta(x) $ on $\{(x-\tau)\cdot \nu = -1/\eta\}$, and $\partial \tilde{w}_\eta /\partial \nu = G(D\tilde{w}_\eta,  x-z_1)$ on $H_{0}$. Then by (\ref{g33}) and Lemma~\ref{perturbation} with (\ref{g22}), 
	$$
	|\mu_\eta -\mu_\e| \leq |\mu_\eta -\mu(\tilde{w}_\eta)| + |\mu(\tilde{w}_\eta) -\mu_\e| \leq  C(\Lambda(\e,\nu) +\eta)
	$$ 
	where $\mu(\tilde{w}_\eta)$ is  the slope of the linear approximation of $\tilde{w}_\e$. The above inequality implies that the slope $\mu$ of a subsequential limit of $u_\e$ depends on neither the subsequence $\{\e_j\}$ nor $\tau$. Also  sending $\eta \to 0$, we get an error estimate (d) when $\nu$ is irrational.

	\medskip

\noindent{\bf Proof of Theorem~\ref{main-cell-2} (a) and (b) for rational directions:}
	Let $\nu$ be a rational direction with $\tau =0$. We claim that $\partial u /
	\partial \nu  =\mu(\nu,q)$ for a constant $\mu(\nu,q)$ which depends on
	$\nu$ and $q$, not on the subsequence $\{\e_j\}$. More precisely, if $\eta \leq
	\e$, then 
	\begin{equation}\label{error222}
	|\mu(u_\eta) -\mu(u_\e)| \leq C\Lambda(\e, \nu).
	\end{equation}

\medskip

The proof of (\ref{error222}) is parallel to that of  (\ref{error2}). 
	Let $w_\e$ and $w_{\eta}$ be as given in the proof of  (\ref{error2}). Note that since
	$\Omega_\e$ and $\Omega_\eta$ have their Neumann boundaries passing
	through the origin, $\partial w_\e /\partial \nu =G(x) =\partial
	w_\eta /\partial \nu$ without translation of the $x$ variable, and
	thus we do not need to use the properties of hyperplanes with an
	irrational normal (Lemma~\ref{lemma-M} (b)) to estimate the error
	between the shifted Neumann boundary datas. In other words, there
	exist $q_1 \in H^1$ and $q_2 \in H^2$ such that
	$p=q_1=q_2$ mod $\Z^n$, hence $G(\cdot, x-z_1) = G(\cdot,  x-z_2)$   in the proof of  (\ref{error2}). Following the proof of  (\ref{error2}), we
	get an upper bound $\Lambda(\e, k)$ of $|\mu_\eta-\mu_\e|$. Note that
	we do not have the term $\eta$ in (\ref{error222}) since
	$G(\cdot,  x-z_1) = G(\cdot,  x-z_2)$. 
Sending $\eta \to 0$ in (\ref{error222}), we obtain the error estimate (b) for rational directions with $\tau =0$.

\medskip

\medskip

\noindent 
\textbf{Proof of Theorem~\ref{main-cell-2} (c) and (d):} Let $\delta>0$ and let  $\nu$ be an irrational direction. Lemma~\ref{FK} implies that there is a positive number $T_\nu(\delta) \leq \delta^{-(n-1)}$ such that $|T_\nu(\delta)\nu| \leq \delta$ mod $\Z^n$. Then for some $ P \in \Z^n$ and $T = T_\nu(\delta) + O(\delta)$ 
$$
|T \nu -P|\leq \delta
$$
and 
$T \nu \in P+<\vec{P}>^\perp $. Let $\theta=\theta(\delta, \nu)>0$ be the angle between $\nu$ and $\vec{P}$, then 
\begin{equation} \label{lalalala}
|T \nu -P | =T \theta \leq \delta
\end{equation} 
and we can observe the set $\{m T \nu \,\,|\,\, 0\leq m \leq [\displaystyle{\frac{1}{T\theta}}]\} $ is evenly distributed  mod $\Z^n$. Hence we get
\begin{equation} \label{lalalalala}
\omega_{\nu}(N) \leq T\theta  \hbox{ when } N =  [\frac{1}{\theta}] .
\end{equation} 

Let $ \e(\delta, \nu)$ be a  constant depending on $\delta$ and the direction $\nu$ such that 
\begin{equation} \label{lalala}
\e(\delta, \nu)= \delta^{2} \theta = \delta^{2} \theta(\delta, \nu).
\end{equation}
Then for $0<\e< \e(\delta, \nu) $,
$$
\Lambda (\e, \nu) =  \displaystyle{\inf_{0<k<1, N \in \N}} \{ \e^{ k}N  + \omega_{\nu}(N) +  \e^{1-k}\}
\leq
\displaystyle{\inf_{0<k<1}} \{ \e^{ k}/\theta  + T\theta +  \e^{1-k}\} \leq  \displaystyle{\inf_{0<k<1}} \{ \e^{ k}/\theta  +  \e^{1-k}\} + \delta
$$ where the first and last inequalities follow from (\ref{lalalalala}) and (\ref{lalalala}) respectively. 
Then   by (\ref{lalala})
$$\displaystyle{\inf_{0<k<1}} \{ \e^{ k}/\theta  +  \e^{1-k}\} \leq \displaystyle{\inf_{0<k<1}} \{ (\delta^2\theta)^{ k}/\theta  +   (\delta^2\theta)^{1-k}\}.$$
The infimum is taken when $0<k=\ln(\theta \delta)/\ln (\theta \delta^{2} )<1$  and 
$$\displaystyle{\inf_{0<k<1}} \{ (\delta^{2}\theta)^{ k}/\theta  +   (\delta^{2}\theta)^{1-k}\} = 2 \delta.$$
Hence we can conclude $\Lambda (\e, \nu) \leq 3 \delta$ for $\e < \e(\delta, \nu)=\delta^2 \theta$.

\medskip

Next, we consider a rational direction  $\nu$.  For  $\delta>0$, let $\e < \delta^2/T_\nu$. Then we can check
$$\Lambda(\e, \nu)=\displaystyle{\inf_{0<k<1}} \{ \e^{ k}T_\nu  +  \e^{1-k}\} \leq \displaystyle{\inf_{0<k<1}} \{ \delta^{2k} T_\nu^{1-k}  +   \delta^{2(1-k)}T_\nu^{k-1}\} = 2 \delta.$$ 

\hfill$\Box$

The following lemma will be used in the next section. 

\begin{lemma} \label{Claim 22}
	Let $\nu=e_n$, $\tau=0$, and let $w$ solve
	$$
	\left\{\begin{array} {lll} F (D^2 w, x/\e)=0 &\hbox{ in } &  \{-N\e \leq x_n \leq 0\}; \\ 
	\partial w /\partial x_n =G(Dw,  x/\e) &\hbox{ on }& H_0;\ \\
	w = A  &\hbox{ on }& H_{-N\e}.
	\end{array}\right.
	$$
	where $N$ and $A$ are constants. Then there is a constant $C=C(\lambda, \Lambda, n)$ such that 
	
	$$|w(x)-w(x_0)|\leq  C \e  \quad \hbox{ for  } \quad x, x_0 \in H_{-N\e}.$$

\end{lemma}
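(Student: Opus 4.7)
The natural route is a rescaling argument that converts the thin strip of width $N\e$ into a strip of unit scale, where comparison with linear barriers finishes the job. Set
\[
v(y) := \frac{w(\e y) - A}{\e}, \qquad y \in \Omega_N := \{-N \leq y_n \leq 0\}.
\]
A direct computation using the $1$-homogeneity (F2) of $F$ shows $F(D^2 v, y) = 0$ in $\Omega_N$. The Dirichlet condition transforms into $v \equiv 0$ on $\{y_n = -N\}$, while the Neumann condition is preserved: $\partial_{y_n} v(y) = \partial_{x_n} w(\e y) = G(Dw(\e y), y) = G(Dv(y), y)$ on $\{y_n = 0\}$.

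Next I would bound $v$ by elementary barriers. By Lemma~\ref{bound} applied with slope $q = 0$, or equivalently by checking directly: for $M := \mu_0/(1-c)$ the linear functions $\pm M(y_n + N)$ satisfy $F(D^2(\cdot)) = 0$ and, using the obliquity $(G3)$ together with $(G1)$, $G(\pm M e_n, y) \leq \mu_0 + cM = M$ on $\{y_n = 0\}$; they therefore serve respectively as a supersolution and a subsolution of the rescaled problem. Comparison (Corollary~\ref{thm:cp2}) gives $|v(y)| \leq MN$ on $\Omega_N$, with $M$ depending only on $\mu_0$ and $c$ (hence on the structural constants in $(G1)$-$(G3)$).

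Transferring back, for any $x$ in the original strip,
\[
|w(x) - A| = \e\,|v(x/\e)| \leq \e MN,
\]
so $|w(x) - w(x_0)| \leq 2\e MN$ for any two points $x, x_0$ in the closed strip. In particular, for $x, x_0 \in H_{-N\e}$ we have $w(x) = w(x_0) = A$, so the claim is immediate; what the rescaling really proves is the stronger statement that $w$ is uniformly $O(\e)$-close to $A$ on the whole strip (which is how the lemma is used downstream).

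There is no serious obstacle: the one bookkeeping item is verifying that $M = \mu_0/(1-c)$ is compatible with the Neumann inequality for the barriers, which is where obliquity enters; everything else is a one-line application of the $1$-homogeneity of $F$ and the maximum principle. The constant $C$ in the statement thus depends on $N$ and on the structural constants of $G$ in addition to $\lambda, \Lambda, n$, with $M$ independent of $\e$.
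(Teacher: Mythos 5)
You are right that, read literally, the conclusion is vacuous: for $x,x_0\in H_{-N\e}$ (the Dirichlet boundary) one has $w(x)=w(x_0)=A$ and nothing is proved. But this is evidently a typo, not the intended content. The paper's own proof applies the interior Lipschitz estimate (Theorem~\ref{lemma-reg0}) to points on the hyperplane in question, which makes no sense on the boundary, and Step~5 (together with Definition~\ref{averaged_slope}) invokes the lemma on the interior hyperplane $H_{-N\e/2}$. The intended, nontrivial, claim is that on such an interior hyperplane the oscillation of $w$ is $O(\e)$, with $C$ independent of $N$. That is a genuinely strong statement since $N$ is taken large in applications.

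Your argument cannot reach it, and your supplementary claim is also miscalculated. The $L^\infty$ bound you derive is $|w-A|\le \e MN=O(N\e)$, not $O(\e)$ as you assert; the constant would therefore depend on $N$, contradicting the stated $C=C(\lambda,\Lambda,n)$. The paper's argument is different in kind and cannot be recovered by rescaling and barriers alone. Fixing the hyperplane $H_{-d}$ with $d=N\e/2$: given $x,x_0\in H_{-d}$, one chooses $y\in H_{-d}$ with $y=x_0$ modulo $\e\Z^n$ and $|x-y|\le\e$; since $F(\cdot,\cdot/\e)$ and $G(\cdot,\cdot/\e)$ are $\e\Z^n$-periodic and the Dirichlet data is constant, translation by $y-x_0$ is a symmetry of the problem, and uniqueness gives $w(y)=w(x_0)$. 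One then applies the interior Lipschitz estimate of Theorem~\ref{lemma-reg0} to $w-A$ (which still satisfies $F(D^2(\,\cdot\,),\cdot/\e)=0$) at distance $N\e/2$ from $\partial\Pi$, together with the $L^\infty$ bound $\|w-A\|_\infty\le C N\e$ that your barriers do give, to obtain $|w(x)-w(y)|\le C\|w-A\|_\infty\,|x-y|/(N\e)\le C\e$. The two essential ingredients missing from your proposal are this periodicity/translation step, which reduces the comparison to points at distance $\le\e$, and the interior gradient bound, which upgrades the $O(N\e)$ amplitude to an $O(\e)$ oscillation on the interior hyperplane.
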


\begin{proof} 
	For $x_0,x \in H_{-N\e}$, choose $y \in H_{-N\e}$ such that $|x-y| \leq  \e$ and $y=x_0$ mod $\e
	\Z^n$. Observe that then  $w(y) = w(x_0)$, since $G$ is $1$-periodic on
	$H_0$. Therefore
	$$
	|w (x)-w(x_0)) |=
	|w(x)- w(y)|   \leq C \|w-A\|_{L^\infty}|\frac{x-y}{N\e}| \leq C \e,
	$$
	where the second inequality is from the interior Lipschitz regularity (Theorem~\ref{lemma-reg0}) applied to $w(N\e x)$.	
\end{proof}

\section{Continuity over normal directions}

In the previous section we have shown that for an irrational direction $\nu \in \mathcal{S}^{n-1} -
\RR\ZZ^n$, there is a unique homogenized slope $\mu(\nu,q)$ for any solution $u^\nu_\e$ of $(P)_{\e,\nu, \tau, q}$ in $\Pi(\nu,\tau)$. In this section we investigate the continuity properties of $\mu$ with respect to $\nu$ and $q$, as well as the mode of convergence for $u_\e^{\nu}$ as the normal direction $\nu$ of the domain varies. 

\medskip

We first show that $\mu$ is Lipschitz with respect to $q$, which directly follows from the $1$- homogeneity of $G$.

\begin{theorem}\label{lipinq}
	For $\nu \in \mathcal{S}^{n-1} - \RR\ZZ^n$,  $\mu(\nu,q)$ is uniformly Lipschitz in $q\in <\nu>^{\bot}$, independent of $\nu$.
\end{theorem}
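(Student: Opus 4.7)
The plan is to transfer the Lipschitz estimate from the approximate cell problem $(P)_{\e,\nu,\tau,q}$ to its $\e\to 0$ limit via a direct barrier construction. Fix $\tau\in\R^n$, an irrational $\nu\in\mathcal{S}^{n-1}$, and $q_1,q_2\in\langle\nu\rangle^\perp$, and let $u_\e^i$ denote the solution of $(P)_{\e,\nu,\tau,q_i}$ for $i=1,2$. With $m$ the Lipschitz constant of $G$ in $p$ from (G2) and $c<1$ the oblicity constant from (G3), set $L := m/(1-c)$ and consider the two affine perturbations
\[
w^\pm(x) := u_\e^1(x) + (q_2-q_1)\cdot x \pm L\,|q_2-q_1|\bigl((x-\tau)\cdot\nu + 1\bigr).
\]

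The first step is to verify that $w^+$ is a viscosity supersolution and $w^-$ a viscosity subsolution of $(P)_{\e,\nu,\tau,q_2}$. In the interior this is immediate since $w^\pm$ differ from $u_\e^1$ by an affine function. On $H_{-1}$ the normal factor vanishes and $w^\pm = q_1\cdot x + (q_2-q_1)\cdot x = q_2\cdot x$. The substantive point is the Neumann condition on $H_0$: writing $h^\pm := (q_2-q_1)\pm L|q_2-q_1|\nu$, which has tangential part of size $|q_2-q_1|$ and normal component of size $L|q_2-q_1|$, the tangential Lipschitz bound from (G2) together with the obliqueness bound (G3) yields
\[
\bigl|G(Du_\e^1+h^\pm,x/\e) - G(Du_\e^1,x/\e)\bigr| \leq (m + cL)\,|q_2-q_1| = L\,|q_2-q_1|,
\]
where the last equality comes from $L=m/(1-c)$. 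Combined with $\partial_\nu u_\e^1 = G(Du_\e^1,x/\e)$ and the $\pm L|q_2-q_1|$ contribution produced by the added term, this gives $\partial_\nu w^+ \geq G(Dw^+,x/\e)$ and $\partial_\nu w^- \leq G(Dw^-,x/\e)$ on $H_0$. Corollary~\ref{thm:cp2} then sandwiches $w^- \leq u_\e^2 \leq w^+$ throughout $\Pi$, which rearranges to $|u_\e^2 - u_\e^1 - (q_2-q_1)\cdot x| \leq L|q_2-q_1|((x-\tau)\cdot\nu+1)$.

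Passing $\e\to 0$ and invoking Theorem~\ref{main-cell-2}(a), both $u_\e^i$ converge uniformly to $\mu(\nu,q_i)((x-\tau)\cdot\nu+1)+q_i\cdot x$, so the inequality becomes $|\mu(\nu,q_1)-\mu(\nu,q_2)|\,((x-\tau)\cdot\nu+1) \leq L|q_1-q_2|\,((x-\tau)\cdot\nu+1)$; evaluating on $H_0$, where $(x-\tau)\cdot\nu+1 = 1$, delivers
\[
|\mu(\nu,q_1)-\mu(\nu,q_2)| \leq L\,|q_1-q_2|.
\]
The same construction handles rational $\nu$ with $\tau=0$ via the corresponding part of Theorem~\ref{main-cell-2}(a). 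I do not expect a serious obstacle: the argument is essentially algebraic once one exploits that $q_2-q_1$ is purely tangential (so only the tangential Lipschitz bound $m$ enters the in-plane comparison) while the vertical correction $\pm L|q_2-q_1|\nu$ couples to $G$ through the oblicity constant $c<1$. Consequently no convexity of $F$, higher regularity estimates, or homogenization rates are needed, and $L = m/(1-c)$ is genuinely independent of $\nu$.
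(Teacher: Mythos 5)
Your proof is correct and follows essentially the same route as the paper's: construct the affine barriers $w^\pm$ from $u_\e^1$, verify the super/subsolution property via (G2)--(G3) with $L=m/(1-c)$, sandwich $u_\e^2$ by Corollary~\ref{thm:cp2}, and pass to the $\e\to 0$ limit. The only (cosmetic) difference is that you use the normal factor $(x-\tau)\cdot\nu+1$ rather than the paper's $x\cdot\nu$, which is actually the cleaner normalization since it makes $w^\pm$ agree with $q_2\cdot x$ exactly on $H_{-1}$.
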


\begin{proof}
	
	For $q_1, q_2 \in <\nu>^{\bot}$, let $u_\e^i$ be the unique bounded solution of $(P)_{\e, \nu,\tau, q_i}$ for $i=1,2$. Let $m$ be the Lipschitz constant for $G$ given in (G1) and $c$ be as given in (G3). Then  it follows that 
	$$
	w_{\pm}(x):= u^1_\e(x) + (q_2-q_1)\cdot x \pm \frac{m}{1-c} |q_1-q_2|(x\cdot\nu)
	$$
	is respectively a super and subsolution of $(P)_{\e, \nu, \tau,  q_2}$. Hence by Corollary~\ref{thm:cp2} we have 
	$$
	w_- \leq u^2_\e  \leq w_+.
	$$  
	
	From here and Theorem~\ref{main-cell-2} it follows that 
	$$
	|\mu(\nu, q_1) - \mu(\nu, q_2) | \leq \frac{m}{1-c}|q_1-q_2|.
	$$
	
\end{proof}

The dependence of  $\mu$ on $\nu$ is a much more subtle matter due to the change of the domain and the resulting changes in boundary conditions on the Neumann boundary. From now on we work with a fixed choice of $q$ and denote $\mu= \mu(\nu)$.

\medskip

For $s\geq 0$,  let $T_\nu(s)$ be the smallest positive number $\geq 1$ 
such that $$|T_\nu(s) \nu| \leq s \,\,\hbox{ mod }\,\,
\ZZ^n. $$  Note that with this definition $T_\nu$ given in the Appendix corresponds to $T_{\nu}(0)$ which is larger than all $T_\nu(s)$. In general  Lemma~\ref{FK} yields
\begin{equation} \label{lastt}
T_{\nu}(s) \leq  \sqrt{n} \cdot s^{-(n-1)}.
\end{equation}

\medskip

\begin{theorem}\label{continuity}
	With fixed $q$, let us denote $\mu=\mu(\cdot,q): (\mathcal{S}^{n-1} - \RR\ZZ^n) \to \RR$ be as given
	in Theorem~\ref{main-cell-2}. Then $\mu$ has a continuous extension $\bar{\mu}(\nu):
	\mathcal{S}^{n-1} \to \RR$. More precisely, let us fix a direction $\nu \in
	\mathcal{S}^{n-1}$ and a constant $\delta>0$. If $\nu_1$ and $\nu_2$ are irrational directions such that
	
	\begin{equation}\label{small}
	0< \theta_i := |\nu_i-\nu| < \frac{\delta^{5/2}}{T_\nu(\delta^{5/2})} \,\,\,\hbox{ for }i=1,2
	\end{equation}
	
	then we have 
	\begin{itemize}
		\item[(a)]$|\mu(\nu_1)-\mu(\nu_2)|<C \delta^{1/2}$.

		\item[(b)] $\bar{\mu}(\nu)$ is H\"{o}lder continuous on $ \mathcal{S}^{n-1}$ with  a H\"{o}lder exponent of $\dfrac{1}{5n}$. 
	\end{itemize}
\end{theorem}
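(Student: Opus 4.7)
The plan is to establish the quantitative estimate (a); part (b) then follows by combining (a) with the scaling in the smallness threshold \eqref{small} and extending $\mu$ by Cauchy continuity to rational directions. For (a) I would take three steps: select a common rational approximant to $\nu,\nu_1,\nu_2$; solve the approximate cell problems at a matched scale; compare the resulting solutions via a rotation, using rotation-invariance of $\bar F$.

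\textbf{Setup and rational approximant.} By Lemma~\ref{FK} there exist $T=T_\nu(\delta^{5/2})\leq \sqrt{n}\,\delta^{-5(n-1)/2}$ and $P\in\mathbb{Z}^n$ with $|T\nu-P|\leq\delta^{5/2}$. The hypothesis \eqref{small} then yields $|T\nu_i-P|\leq T\theta_i+\delta^{5/2}\leq 2\delta^{5/2}$ for $i=1,2$, so $P$ is a common rational approximant at scale $T$ for all three directions. Choose $\e_0$ small enough that Theorem~\ref{main-cell-2}(b)--(c) delivers $|u_{\e_0}^{\nu_i}-u^{\nu_i}|\leq C\delta^{1/2}$, where $u^{\nu_i}$ is the linear profile of slope $\mu(\nu_i)$.

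\textbf{Rotation and comparison.} Let $R$ be the rotation in $\mathrm{span}(\nu_1,\nu_2)$ with $R\nu_1=\nu_2$, of angle at most $|\nu_1-\nu_2|\leq 2\delta^{5/2}/T$. Put $v(x):=u_{\e_0}^{\nu_1}(R^{-1}(x-\tau)+\tau)$, defined on $\Pi(\nu_2,\tau)$, which solves $F(R^T D^2 v\,R,\,R^{-1}(x-\tau)/\e_0+\tau/\e_0)=0$ together with $\nu_2\cdot Dv=G(R^T Dv,\,R^{-1}(x-\tau)/\e_0+\tau/\e_0)$ on the Neumann boundary. Since $\bar F$ is rotation-invariant, Theorem~\ref{ext} provides the same homogenized operator as governs $u_{\e_0}^{\nu_2}$. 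The rotated boundary condition differs from $G(Dv,\cdot/\e_0)$ by a Lipschitz twist of the gradient of size $O(|\nu_1-\nu_2|)$ and by an argument shift $(R^{-1}-I)(x-\tau)/\e_0$ which, within $B_R(\tau)$ for $R\lesssim T/\delta^{5/2}$, lands within $O(\delta^{5/2})$ of a $\mathbb{Z}^n$-vector thanks to the common approximant $P$; periodicity of $G$ reduces the net $G$-perturbation to $O(\delta^{5/2})$. Applying Lemma~\ref{perturbation} to $v$ and $u_{\e_0}^{\nu_2}$ on $\Pi(\nu_2,\tau)\cap B_R(\tau)$ absorbs both errors into the linear-profile bound from Step~1, producing $|\mu(\nu_1)-\mu(\nu_2)|\leq C\delta^{1/2}$.

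\textbf{H\"older extension and main obstacle.} Writing $\theta=|\nu_1-\nu_2|$ and using $T\leq\sqrt n\,\delta^{-5(n-1)/2}$, the smallness hypothesis can be met with $\delta\sim\theta^{2/(5n)}$, which gives $|\mu(\nu_1)-\mu(\nu_2)|\lesssim\theta^{1/(5n)}$; continuity at rational directions is obtained by completing $\mu$ along Cauchy sequences of nearby irrationals. The delicate part of the argument is the rotation step: the rotated operator loses exact $\mathbb{Z}^n$-periodicity, and the rotated oblique condition drifts both in its gradient and its spatial argument. Rotation-invariance of $\bar F$ handles the interior, but for the boundary condition the only available periodicity must be borrowed from the common rational approximant $P$ at scale $T$, which is precisely what forces the power $5/2$ in the exponent of $\delta$ appearing in \eqref{small} and, consequently, the H\"older exponent $1/(5n)$.
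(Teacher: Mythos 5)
Your route is genuinely different from the paper's and, as sketched, contains a gap at its central step. You propose to rotate the solution $u_{\e_0}^{\nu_1}$ directly from $\Pi(\nu_1,\tau)$ onto $\Pi(\nu_2,\tau)$ and then compare it with $u_{\e_0}^{\nu_2}$ using Lemma~\ref{perturbation} and Theorem~\ref{ext}. But after the rotation the interior coefficient becomes $F(R^T M R,\, R^{-1}(x-\tau)/\e_0+\cdot)$, which is periodic with respect to the rotated lattice $R\,\ZZ^n$ and not $\ZZ^n$; Theorem~\ref{ext} is proved for $\ZZ^n$-periodic $F$ and does not apply to the rotated equation without a further argument. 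More importantly, Lemma~\ref{perturbation} compares two functions with the \emph{same} interior Pucci inequalities and different oblique data $G_1,G_2$: it absorbs the boundary perturbation but gives no control at all over a perturbation of the interior oscillating operator. Since your rotated $v$ and $u_{\e_0}^{\nu_2}$ solve equations with different (and mutually non-periodic) interior coefficients, the comparison step as written does not go through. The claim that the argument shift $(R^{-1}-I)(x-\tau)/\e_0$ "lands within $O(\delta^{5/2})$ of a lattice vector" is also unsubstantiated (the shift is linear in $|x-\tau|$ and of size up to $\sim 1$ in the ball $B_R$ you consider; you would need a quantitative equidistribution argument, and even then only for $G$, not for $F$).

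The paper sidesteps exactly this obstruction by never rotating the oscillating operator. It runs a two-scale decomposition: within distance $N\e$ of the Neumann boundary the original $F(\cdot,x/\e)$ and the original strip $\Pi^{\nu_1}$ are kept, and the boundary datum $G$ is replaced by the locally projected, tangentially periodic $G_k$; this produces local "first homogenization" slopes $\mu^N(G_k)$ (Lemma~\ref{important}). Beyond distance $N\e$ the interior operator is replaced by the homogenized $\bar{F}$ with \emph{no} $x$-dependence, and only there is a rotation invoked (Lemma~\ref{Lemma4.7}), where it is harmless because $\bar F$ is constant-coefficient and rotation-invariant. Finally, the comparison of $\mu(\nu_1)$ and $\mu(\nu_2)$ is reduced to comparing $\mu^N(G_k)$ with $\mu^M(G_k)$, each of which is a rational-direction cell-problem slope for the periodic datum $G_k$, so the error estimate \eqref{eqn5} applies. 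If you want to salvage a one-step rotation, you would first have to establish homogenization for the rotated (quasi-periodic) coefficient and a version of the perturbation lemma tolerant to changes in $F$, neither of which is in the paper's toolkit.
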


\medskip

\begin{remark}
	In the proof we indeed show that, for  any directions $\nu_1$ and $\nu_2$ satisfying \eqref{small}, the range of $\{\mu(u_\e^{\nu_i})\}_{\e, i}$ fluctuates only by $\delta$, if $\e$ is sufficiently small . The fact that $\nu_i$'s are irrational is only used to guarantee that there is only one subsequential limit for $\mu(u_\e^{\nu_i})$. 
	\end{remark}

\medskip

For the rest of the paper we prove (a) of Theorem~\ref{continuity}. Theorem~\ref{continuity} (b) follows from (\ref{lastt}), (\ref{small}) and  Theorem~\ref{continuity} (a).

\medskip

\medskip

\subsection{Basic settings and Sketch of the proof } \label{4.1}

For notational simplicity and clarity in the proof, we assume that $n=2$ and $\nu =e_2$. We will explain in the paragraph below how to
modify the notations and the proof for $\nu\neq e_2$. For general dimension $n$, we refer to Remark~\ref{generaln}. We denote
$$
\Pi :=\Pi(e_2,0) \hbox{ and } \Pi^{\nu_i}:= \Pi(\nu_i,0), \hbox { for } i=1,2.
$$
We also denote 
$$
H_0=H_0(e_n), \quad H_0^{\nu_i} :=H_0(\nu^i) \quad \hbox{ for }i=1,2.
$$
For given 
$$m \in \N  \hbox{ and } \delta:= 1/m>0,
$$ we divide the unit strip $\R\times [0,1]$ by $m$ number of small horizontal strips of width $\delta$ 
and define a family of functions $\{G_k\}_k$ so that the value of $G_k$ at $(x_1, x_2)$ is  same as the value of $G$ at $(x_1, \tilde{x}_2)$, where $(x_1, \tilde{x}_2)$ is the projection of $(x_1,x_2)$ onto  the bottom of the k-th strip. 
More precisely we define

\begin{equation}\label{neumann:proj}
G_k(x_1, x_2):=G(x_1, \delta(k-1)) \hbox{ for }
k=1,...,m.
\end{equation} Then  $G_k$ is a $1$-periodic function with respect to $x_1$.



\medskip

Next we introduce the parameters
\begin{equation}\label{angles}
\theta_1:=|\nu_1-e_2|,  \,\,\, \theta_2:=|\nu_2-e_2|
\end{equation}
and 
\begin{equation}\label{steps}
N :=[\dfrac{\delta}{\theta_1}], \,\,\, M: =[\dfrac{\delta}{\theta_2}].
\end{equation}
Without loss of generality,  assume $\theta_2 \leq \theta_1$ and thus $N\leq M$. 

\medskip

If $\theta_i$'s are sufficiently small, then we will be able to approximate $G$ on both of the Neumann boundary $H_0^{\nu_1}$ and $H_0^{\nu_2}$ using the universal boundary data $G_k$'s  which depends only on $\delta$,  but not on the direction $\nu_1$ nor $\nu_2$. In particular, in meso-scopic scale $G$ can be approximated by many repeating pieces ($N$ for $\nu_1$ and $M$ for $\nu_2$) of $G_k$, for a suitable $1\leq k \leq m$ on each pieces of $H_0^{\nu_i}$. Thus the problem already experiences averaging phenomena: we call this as {\it the first} or {\it near-boundary} homogenization.  Note that  in this step the only difference in the averaging phenomena  between  the two directions $\nu_1$ and $\nu_2$, besides the errors in terms of $G$ and $G_k$ on $H_0^{\nu_i}$, is the number of repeating data $G_k$ for each $k$. This explains the proximity of $\mu(\nu_1)$ and $\mu(\nu_2)$.

\medskip

On the other hand,  since $\nu_i's$ are irrational directions, the distribution of $G_k$ approximates the given $G$ on $H_0^{\nu_i}$ in large scale.  Since $\nu_1$ and $\nu_2$ are close to the rational direction $e_2$,  the averaging behavior of a solution $u_\e^{\nu_i}$ in
$\Pi^{\nu_i}$  would appear in a very large scale, in other words only after $\e$ gets very
small. We call this as the {\it secondary} homogenization.
\medskip

The two-scale homogenization procedure has been introduced in \cite{CK}, \cite{CKL}. It allows studying continuity properties of the homogenized boundary data as we approach the rational direction, which might be singular points as described in the introduction. This point of view was also employed in \cite{F} and \cite{FK} to study homogenization for general operators, by studying the singularity of homogenized operator at rational directions. Let us also point out near the boundary  the small-scale oscillation of the operator interacts with that of boundary data to create a meso-scale averaging phenomena. Due to this interaction, characterizing the homogenized boundary condition remains a challenging and interesting open problem. After the first homogenization, the boundary data changes to periodic data  in a meso-scale (which will be $N\e$ below), and hence the operator is well approximated by the homogenized operator $\bar{F}$ in the second homogenization in large scale.



\medskip

Below we begin the analysis of the two-step homogenization as described above. We will work with small $\e>0$ satisfying
\begin{equation}\label{small2}
\e \leq \frac{\delta\theta_i}{T_\nu(\delta^{5/2})} \,\,\,\hbox{ for }i=1,2
\end{equation}
which can be stated as 
\begin{equation}\label{order202}
0<\e \leq\delta\theta_i \,\,\, \hbox{ for }i=1,2
\end{equation}
since $T_\nu (s) \equiv 1$  when $\nu=e_2$.
It follows that 
\begin{equation} \label{that}
mN\e \leq mM\e \leq \delta.
\end{equation}
After the near-boundary homogenization, $u^{\nu_1}_\e$ will be approximated by a solution which has periodic boundary data  with period $mN\e$.  With \eqref{that}  it follows that $u_\e^{\nu_1}$  fluctuates in order of $\delta$ in the interior of the strip domain.

\medskip

 On the other hand, (\ref{small}) of Theorem~\ref{continuity} can be stated as 
\begin{equation}\label{parameter}
0< \theta_1 \leq \delta^{5/2}.
\end{equation}
It follows then that
\begin{equation} \label{this}
 1/N\leq \delta^{3/2}
\end{equation}
which ensures $u_\e^{\nu_i}$ to homogenize $N\e$-close to the Neumann boundary.

\medskip
 
 Next define 

\begin{equation}\label{interval_k}
I_k:=[(k-1)N\e,kN\e] \times \RR \,\,\hbox{  for }\,\, k \in \ZZ.
\end{equation}
Then we can observe that in each $I_k$, 
the Neumann boundary  $H_0^{\nu_1}$  is located within $\delta \e$-distance from   $H_0 + \delta \e (k-1)e_2$,  mod $\e\Z^n$.
 Thus on each  $H_0^{\nu_1} \cap I_k$, $G$ is approximated well by $G_k$  for $1\leq k \leq m$. If we extend the definition of $G_k$ over $k \in \Z$ by letting $G_k = G_{\bar{k}}$ for  $k=\bar{k}$ (mod $m$) then we have
\begin{equation}\label{eqn1}
|G(p,  \frac{x}{\e}) - G_{k}  (p,  \frac{x}{\e})| <
C(1+|p|)\delta \hbox{ on } H^{\nu_1}_0 \cap I_k \hbox{ for  } k\in\ZZ.
\end{equation}
Similarly for $\nu_2$, if we define $J_k:= [(k-1)M\e, kM\e]\times\R$ for $k\in\Z$.

\medskip

\begin{remark} For $\nu \neq e_2$ in $\R^2$, there exists a rational
	direction $\tilde{\nu}$ such that for $T = T_\nu(\delta^{5/2})$, 
	$$T \tilde{\nu} =0 \,\, (\hbox{mod }\ZZ^2); \quad  
	|\nu-\tilde{\nu}|\leq \delta^{5/2}/T.$$
	Observe that if Theorem~\ref{continuity} holds for the rational
	direction $\tilde{\nu}$, it also holds for $\nu$. For the proof of
	the theorem for $\tilde{\nu}$, let $x' = x-(x\cdot\tilde{\nu}) \tilde{\nu}$ and  define
	$$G_k = G_k( x', x-x')= G(x',  \delta(k-1) \tilde{\nu}) \,\, \hbox{ for }\,\,1 \leq
	k \leq m.$$  Then $G_k$ is a periodic function on $\{x\cdot \tilde{\nu}=0\}$
	with a period  of $T$. The only difference between the case of
	$\tilde{\nu}$ and $e_2$ is in the periodicity of the function
	$G_k$, and it does not make any essential difference in the proof.
	we point out that instead of the conditions (\ref{parameter}), (\ref{this}) and
	(\ref{that}), we will need
	$$
	\frac{1}{TN} \leq \delta^{3/2} \hbox {;  } \quad  T\theta_1 \leq \delta^{5/2}\hbox {;  } \quad mTM\e \leq \delta
	$$
	since $G_k$ has a period of  $T$. These conditions will be ensured if
	$\theta_i$ and $\e$ satisfy the assumptions as in
	Theorem~\ref{continuity}. 
\end{remark}

\subsection{ Proof of Theorem~\ref{continuity}} \label{continuityy} 

In the first three steps we follow the heuristics above and replace the Neumann condition with the locally projected boundary data $G_k$.  Then we go through the two-step homogenization procedures to obtain the first slope $\mu^N(G_k)$ on each $I_k$ near the boundary, and then  the global slope $\mu(\nu_1)$.  While the actual first homogenization takes place in $\Pi^{\nu_1}$ , it turns out that its value has a small difference from $\mu^N(G_k)$ taken in  $\Pi$ (see Lemma~\ref{important}). This fact is important in establishing a universal domain for both directions $\nu_1$ and $\nu_2$. In fact, we rotate  the middle and inner regions to compare the slopes in $\Pi^{\nu_1}$ and $\Pi^{\nu_2}$. For this, we use the rotational  invariance of the homogenized operator $\bar{F}$. (See Lemma~\ref{Lemma4.6} and Lemma~\ref{Lemma4.7}.) 
The rest of steps are to verify that indeed $\mu(\nu_1)$ is the correct averaged slope for the problem $(P)_{\e,\nu_1, \tau, q}$.

\medskip

\noindent {\bf Step 1. First homogenization near Boundary ($N\e$ - away from $H_0^{\nu_1}$)}

\medskip
We proceed to discuss the first homogenization. 
Denote $x = (x_1, x_2)$ throughout this section. For a  given linear function $l(x) = l(x_1)$ and $k\in \ZZ$, let $u= u^{N,\e}$ and $v_k = v_k^{N,\e}$ solve the following problem with $u=l$ on $H_{-N\e}^{\nu_1}$ and $v_k=l(x)$ on $H_{-N\e}$:

\begin{equation}
\left\{\begin{array}{lll}
F(D^2 u, x/\e)=0 & \hbox{ in }& \{-N \e \leq x \cdot \nu_1 \leq 0\};\\ \\
\dfrac{\partial u}{\partial\nu_1}
(x)= G(Du,  x/\e) &\hbox{ on }&  H_0^{\nu_1} 
\end{array}\right.
\end{equation}
and
\begin{equation}\label{vk} \left\{\begin{array}{lll}
F(D^2 v_k, x/\e) =0 &\hbox{ in }& \{-N\e \leq x_2 \leq 0\};\\ \\
\dfrac{\partial v_k}{\partial x_2 }(x) =
G_k(Dv_k,  x/\e   ) &\hbox{ on }& H_0.
\end{array}\right.
\end{equation}
\medskip

\begin{definition}\label{averaged_slope}
	For a given function $u: \{-N \e\leq x\cdot \nu \leq 0\}\to \R$ and $I_k$ given as in \eqref{interval_k}, let $a_k$ and $b_k$ be the  middle points of $I_k\cap H_{-N\e/2}$ and   $I_k\cap H_{-N \e}$ respectively, and consider the unique linear function $h$ given by 
	$h = u$ at $x= a_k, b_k$ and  $D_Th(b_k) =D_Tu(b_k) 
	$.  
	(Here  $D_Th$ denotes the tangential derivative of $h$ along the direction $\nu^{\perp}$.)  Then $\mu_k(u)$ is defined by
	$$\mu_k(u):=\partial h /\partial \nu .$$ 
\end{definition}
Note that the Neumann boundary data of $v_k$ is $G_k$ on each boundary pieces $H_0 \cap I_i$   ($i \in \Z$), and hence $\mu_i(v_k)= \mu(v_k)$. For $N$ as given in \eqref{steps}, we denote     
\begin{equation}\label{slope of v_k}
\mu^N(G_k) := \mu(v_k).
\end{equation}

\begin{lemma} \label{important} 
	For $k \in \Z$ and $\mu_k(u)$ as given in Definition~\ref{averaged_slope},	
	\begin{equation}\label{eqn:2}
	|\mu_k(u) - \mu^N(G_k)| < C \delta^{1/2}.
	\end{equation}

\end{lemma}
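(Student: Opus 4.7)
The plan is to bound the pointwise difference $|u - v_k|$ on the canonical reference points in $I_k$ by a two-step comparison: first, replacing the oscillating Neumann data $G$ by the locally projected data $G_k$; second, accounting for the change of strip orientation from $\nu_1$ to $e_2$ via the rotation invariance of $\bar F$. Since both $\mu_k(u)$ and $\mu^N(G_k) = \mu(v_k)$ are defined (Definition~\ref{averaged_slope}) as slopes of linear interpolations through the same points $a_k, b_k$ separated by a vertical distance of order $N\e$, an $L^\infty$ bound $|u - v_k| \le C\delta^{1/2} N\e$ at these points translates directly to the stated slope estimate. The main obstacle is that at the $\e$-scale, $F(\cdot, x/\e)$ is \emph{not} rotation-invariant, only its homogenization $\bar F$ is; this forces us to interleave the perturbation of the boundary data with a preliminary interior-homogenization step so that the rotation can be executed at the level of $\bar F$.

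For the first step, I introduce an auxiliary $w_k$ solving the same equation as $u$ in the tilted strip $\{-N\e \le x\cdot \nu_1 \le 0\}$, with the same Dirichlet data as $u$ but with the projected Neumann condition $\partial_{\nu_1} w_k = G_k(Dw_k, x/\e)$ on $H_0^{\nu_1}$. By (\ref{eqn1}), $|G - G_k| \le C\delta(1+|p|)$ on $H_0^{\nu_1} \cap I_k$, and the uniform Lipschitz bounds from Theorem~\ref{thm:reg2} let me invoke Lemma~\ref{perturbation} in the rescaled picture $\tilde u(y) := u(N\e y)/(N\e)$, $\tilde w_k(y) := w_k(N\e y)/(N\e)$, which both live in the unit-width strip. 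Taking the ball radius in Lemma~\ref{perturbation} of order $1$ inside the rescaled $I_k$, the conclusion gives $|\tilde u - \tilde w_k| \le C\delta$ on a mid-portion of $\tilde I_k$, i.e., $|u - w_k| \le C\delta \cdot N\e$ in the original scale on the portion of $I_k$ containing $a_k, b_k$.

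For the second step, I rotate $v_k$ by $\theta_1 = |\nu_1 - e_2|$ so the strip aligns with that of $w_k$; call the rotated solution $\tilde v_k$. The rotation shifts the spatial argument of $G_k$ and of $F$ by at most $\theta_1 \cdot N\e/\e \le \delta$ on $I_k$, contributing at most $C\delta$ error in the Neumann data by (G2). Since $G_k$ is $\e$-periodic in $x_1$, which in the rescaled (width-$1$) picture becomes period $1/N$, Theorem~\ref{ext} together with the rate from Theorem~\ref{main-cell-2}(d) applied in the rational direction $e_2$ with $\tau=0$ bounds the homogenization error by $\Lambda(1/N, e_2) \le 2/\sqrt{N}$. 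Since $\theta_1 \le \delta^{5/2}$ forces $N \ge \delta^{-3/2}$ by \eqref{steps}, this error is at most $C\delta^{3/4}$ in rescaled coordinates. Using the rotation invariance of $\bar F$ to identify the two homogenized limits, one concludes $|\tilde w_k - \tilde v_k| \le C(\delta + \delta^{3/4}) = C\delta^{3/4}$ in rescaled coordinates, i.e., $|u - v_k| \le C\delta^{3/4} N\e$ at $a_k, b_k$, and dividing by the vertical span $N\e/2$ yields $|\mu_k(u) - \mu^N(G_k)| \le C\delta^{3/4} \le C\delta^{1/2}$ as claimed.
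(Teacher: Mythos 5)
Your plan invokes rotation invariance of $\bar F$ inside Lemma~\ref{important}, but the paper deliberately avoids that here, and the way you carry it out creates a genuine gap.  The paper's proof never rotates anything: it works with three auxiliary solutions $\tilde u, \tilde v_1, w_1$ in the \emph{thinner} strip of depth $\e/\delta$ (not $N\e$), compares $\tilde u$ with $w_1$ via Lemma~\ref{perturbation} (same tilted geometry, same operator, boundary data $G$ vs.\ $G_1$), and then compares $\tilde v_1$ with $w_1$ by exploiting the $C^{1,1}$ regularity of $w_1$ (after rescaling) to show that $w_1$ \emph{already approximately satisfies} the Neumann condition of $\tilde v_1$ on the nearby flat hyperplane $H_0$, so Lemma~\ref{perturbation} applies again, entirely within the same $\e$-scale operator.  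Rotation invariance of $\bar F$ only enters later, in Lemmas~\ref{Lemma4.6}--\ref{Lemma4.7}, after the interior homogenization to $\bar F$ has been performed.

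Your step 2 has the following concrete problem.  Rotating $v_k$ by $\mathcal{O}$ (with $\mathcal{O}e_2 = \nu_1$) produces a function $\tilde v_k$ solving $F(\mathcal{O}^T D^2 \tilde v_k\,\mathcal{O}, \mathcal{O}^{-1}x/\e) = 0$, i.e.\ a \emph{rotated operator}, not $F(D^2\cdot, x/\e)$.  You only account for the shift in the spatial argument of $F$ and $G_k$, which is indeed $O(\delta)$ on $I_k$, but you do not account for the rotation of the Hessian argument of $F$.  By $(F3)$ that discrepancy is of size $O(\theta_1 \|D^2 \tilde v_k\|)$, and at scale $\e$ the Hessian is not bounded uniformly, so this term cannot be made small by the hypotheses of the lemma.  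Appealing to the rotation invariance of $\bar F$ to ``identify the two homogenized limits'' does not fix this: the whole point of Lemma~\ref{important} is to control the $\e$-scale boundary-layer behavior $N\e$-close to the Neumann boundary, where the problem is still governed by $F(\cdot, x/\e)$ rather than by $\bar F$.  Moreover, $w_k$ solves a problem on the tilted strip with irrational normal $\nu_1$, so $G_k$ is \emph{not} periodic on $H_0^{\nu_1}$ and the rate $\Lambda(1/N,e_2)$ from Theorem~\ref{main-cell-2}(d) does not apply to $w_k$, only to $v_k$; the paper sidesteps this by transferring the Neumann condition from $H_0^{\nu_1}$ to $H_0$ via the $C^{1,1}$ bound for $w_1$.

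Your step 1 also has a quantitative gap.  In Lemma~\ref{perturbation} the error is $\delta(L+1) + CM/R$ on $\Pi\cap B_1$.  In your rescaled picture the strip has width $1$, $M = O(1)$, and you take $R = O(1)$, so the term $CM/R$ is $O(1)$, not $O(\delta)$, and the conclusion $|\tilde u - \tilde w_k|\le C\delta$ does not follow.  To make Lemma~\ref{perturbation} yield an $O(\delta^{1/2})$ slope error one must take the ball radius a factor $\delta^{-1/2}$ larger than the strip depth, which is exactly why the paper works with the strip of depth $\e/\delta$ and ball radius $\delta^{-3/2}\e$; this, in turn, requires the verification (as in the paper's inequality (\ref{error00})) that the boundary-data discrepancy bound extends over the entire ball and not just over $I_k$.
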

\begin{proof}
	We will prove the lemma for $k=1$, i.e., we will compare $\mu_1(u)$ with $\mu(v_1)$. Let  $\tilde{u}$ and  $\tilde{v}_1$ solve the following problem with
	$\tilde{u}=l$ on $H_{-\e /\delta }^{\nu_1}$ and $\tilde{v}_1=l$ on $H_{-\e/\delta}$:
	$$
	\left\{\begin{array}{lll}
	F(D^2 \tilde{u}, x/\e)=0 & \hbox{ in }& \{- \e/\delta \leq x \cdot \nu_1 \leq 0\};\\ \\
	\dfrac{\partial \tilde{u}}{\partial\nu_1}
	(x)= G(D\tilde{u}, x/\e) &\hbox{ on }&  H_0^{\nu_1} 
	\end{array}\right.
	$$
	and
	$$ \left\{\begin{array}{lll}
	F(D^2 \tilde{v}_1, x/\e) =0 &\hbox{ in }& \{-\e/\delta \leq x_2 \leq 0\}\\ \\
	\dfrac{\partial \tilde{v}_1}{\partial x_2 }(x) =
	G_1(D\tilde{v}_1,  x/\e   ) &\hbox{ on }& H_0 .
	\end{array}\right.
	$$
	We will compare both of $\tilde{u}(x)$ and  $\tilde{v}_1(x)$ to $w_1(x)$ in the ball $|x| \leq \delta^{-1-\alpha_0} \e$, where $\alpha_0= 1/2$. For computational convenience we will call this number as $\alpha_0$. Let $w_1(x)$ solve $w_1 = l$ on $H^{\nu_1}_{-\e/\delta}$ with
	\begin{equation} \label{ww1}
	\left\{\begin{array}{lll}
	F(D^2 w_1, x/\e)=0 & \hbox{ in }& \{- \e/\delta \leq x \cdot \nu_1 \leq 0\};\\ \\
	\dfrac{\partial w_1}{\partial\nu_1}
	(x)= G_1(Dw_1,  x/\e) &\hbox{ on }&  H_0^{\nu_1}.  
	\end{array}\right.
	\end{equation}    
	Here observe that in the  ball  $|x| \leq \delta^{-1-\alpha_0} \e$, the hyperplanes $H_0^{\nu_1}$ and $H_0$
	only differ by $\theta_1 \delta^{-1-\alpha_0} \e $.  
	
	Below we derive some properties of $w_1$. Consider 
	$$
	\bar{w}(x):= \e^{-1} w_1(\e x).
	$$ Then by Theorem~\ref{thm:reg2},  $\bar{w}$ is  $C^{1,1}$ regular up to the Neumann boundary in a unit ball, if $\bar{w}$ has a bounded  oscillation  in the ball $|x| \leq 1/\delta$. Observe that   $( \e/\delta)^{-1} w_1(\e x/\delta)$ is defined in the strip $\{ - 1 \leq x \cdot \nu_1 \leq 0\}$ and it has a periodic Neumann data $G_1(\cdot, \cdot, x/\delta)$ with period $\delta$. Since it has a periodic boundary data, it corresponds to the case of rational direction with Neumann boundary passing through the origin. Hence 
	we can use the error estimate Theorem~\ref{main-cell-2} (b)   for the rational direction passing through the origin, with  $T_\nu =1 $. Then we obtain 
	
	\begin{equation}\label{comp00}
	|( \frac{\e}{\delta})^{-1} w_1(\frac{\e x}{\delta}) - h(x)| \leq  \displaystyle{\inf_{0<k<1}}C(\delta^{ k}  + \delta^{1-k}) =C\delta^{1/2}
	\end{equation}
	where $h$ is a linear solution approximating  $( \e/\delta)^{-1} w_1(\e x/\delta)$.  
	Then by (\ref{comp00}) 
	\begin{equation}\label{linear} | w_1(\frac{\e x}{\delta}) -\frac{\e}{\delta} h(x)| \leq  C \delta^{-1/2} \e
	\end{equation}
	and hence the oscillation of $\bar{w}$ becomes  less than $C \delta^{-1/2}$ in the ball $|x| \leq 1/\delta$. Later in the proof we will use $C^{1,1}$ regularity of $\bar{w}$ as well as the  linear approximation \eqref{linear} of $w_1$.

	\medskip

	First, we compare $\tilde{u}$ to $w_1$ in $B_{\delta^{-1-\alpha_0} \e}(0)$. For this,  we compare the boundary data of  $\tilde{u}$, that is  $G$, to $G_1$.  Observe that if $x \in H^{\nu_1}_0 \cap B_{\delta^{-1-\alpha_0} \e}(0)$, then $x \in I_k$ for some $|k|  \leq \delta^{-1-\alpha_0}/N = \delta^{-2-\alpha_0}\theta_1$. Hence for $x \in H^{\nu_1}_0 \cap B_{\delta^{-1-\alpha_0} \e}(0)$ (i.e., for $x \in H^{\nu_1}_0 \cap I_k$ with   $|k|  \leq  \delta^{-2-\alpha_0}\theta_1$),
	
	\begin{equation}\label{error00}
	\begin{split}
	|G(p, x/\e)-G_1(p, x/\e)| &\leq 
	|G(p, x/\e)-G_k(p, x/\e)|  +|G_k(p, x/\e)-G_1(p, x/\e)| \\  &\leq C[(1+|p|)\delta + (1+|p|)|k-1|\delta] \\ 
	&\leq C(1+|p|)(\delta + 
	(\theta_1 \delta^{(-1-\alpha_0)}))
	\\ &\leq  C(1+|p|)(\delta + 
	\delta^{(3/2-\alpha_0)})
	\\& \leq  C(1+|p|) \delta,
	\end{split}
	\end{equation} where the second inequality follows from (\ref{eqn1}) and the construction of $G_k$,  third inequality follows from $|k|  \leq  \delta^{-2-\alpha_0}\theta_1$,  the fourth inequality follows from (\ref{parameter}), and the last inequality follows since $\alpha_0 \leq 1/2$.
	This implies, by Lemma~\ref{perturbation},
	\begin{equation}
	\label{thisthis}
	|\tilde{u}(x) - w_1(x)| \leq C(\delta + \delta^{\alpha_0} ) (x \cdot \nu_1 + \frac{\e}{\delta}) \leq C\delta^{\alpha_0}  (x \cdot \nu_1 + \frac{\e}{\delta})\hbox{ in } |x| \leq \delta^{-1-\alpha_0}\e.
	\end{equation}
	Observe that  \eqref{linear} and \eqref{thisthis}  yield 
	$$
	|\tilde{u}(x) - L_1(x)| \leq C( \delta^{\alpha_0} +\delta^{1/2})  (x \cdot \nu_1 + \frac{\e}{\delta})  \leq C \delta^{\alpha_0}   (x \cdot \nu_1 + \frac{\e}{\delta})  \hbox{ in } |x| \leq \delta^{-1-\alpha_0}\e.
	$$
	where $L_1(x) = l(x)+ \mu(w_1) (x \cdot \nu_1+\frac{\e}{\delta})$, and  $\mu(w_1)$ is the average slope of $w_1$. In other words, we obtain  
	\begin{equation}\label{EQ1}
	|\mu_1(\tilde{u}) - \mu(w_1)|\leq C  \delta^{\alpha_0}  . 
	\end{equation}
	
	\medskip
	
	Next, we compare $\tilde{v}_1$ and $w_1$ and prove 
	$$
	| \mu(\tilde{v}_1)-\mu(w_1) |\leq C\delta^{\alpha_0}.
	$$ Recall that the oscillation of $\bar{w}$ is  less than $C \delta^{-1/2}$ in the ball $|x| \leq 1/\delta$ (see \eqref{linear}). 
	If we consider $\tilde{w} = \delta^{1/2} \bar{w}$, then this function solves the boundary condition 
	$$
	\partial\tilde{w} /\partial \nu = \tilde{G}(D\tilde{w},  x)=\delta^{1/2} G( \delta^{-1/2}D\tilde{w},  x) ,
	$$
	which satisfies the assumptions for the $C^{1,1}$ regularity theory, Theorem~\ref{thm:reg2}. Thus we have $$\|\bar{w}\|_{C^{1,1}(B_1)} \leq O(\delta^{-1/2}).$$ 
	
	For $x$ in the  $\sigma\e$-neighborhood of $H^{\nu_1}_0$, choose $\tilde{x}$ to be the closest point to $x$ on $H_0$. Then  by (G1) and (G2) with the $C^{1,1}$ regularity of $\bar{w}$ given above,  $w_1$ satisfies on $H_0$,

	$$
	|G(Dw_1(x),  \frac{x}{\e}) - G(Dw_1(\tilde{x}),  \frac{\tilde{x}}{\e})|  \leq O(\delta^{-1/2 } \sigma) (1+|Dw_1(x)|)
	$$
	Recall that the Neumann boundaries of $w_1 $ and $v_1$    
	($H^{\nu_1}_0$ and $H_0$) 
	only differ in the ball $|x| \leq \delta^{-1-\alpha_0} \e$,  by  $\theta_1 \delta^{-1-\alpha_0} \e  \leq \delta^{3/2-\alpha_0} \e $  (see (\ref{parameter})). So putting $\sigma =\delta^{3/2-\alpha_0} $,  
	$$
	|G(Dw_1(x),  \frac{x}{\e}) - G(Dw_1(\tilde{x}),  \frac{\tilde{x}}{\e})|  \leq O(\delta^{1-\alpha_0})(1+|Dw_1(x)|) \hbox{ on } H_0
	$$
	and  Lemma~\ref{perturbation} yields that  in $ |x| \leq \delta^{-1-\alpha_0}\e
	$,
	$$
	|(\tilde{v}_1 - w_1)(x) | \leq C(\delta^{1-\alpha_0} + \delta^{\alpha_0} ) (x_n + \frac{\e}{\delta})  \leq C \delta^{\alpha_0}  (x_n + \frac{\e}{\delta})  .$$
	This and \eqref{linear} yield that in $|x| \leq \delta^{-1-\alpha_0}\e$, 
	$$
	|\tilde{v}_1(x) - L(x)| \leq C (  \delta^{\alpha_0} + \delta^{1/2}) (x_n + \frac{\e}{\delta}) \leq C   \delta^{\alpha_0}  (x_n + \frac{\e}{\delta}) 
	$$
	where $L(x) = l(x)+ \mu(w_1) (x_2+\frac{\e}{\delta})$.  In other words, we obtain
	\begin{equation} \label{com1}
	|\mu(w_1) -\mu(\tilde{v}_1)| \leq  C  \delta^{\alpha_0}.
	\end{equation}
	Recalling $\alpha_0=1/2$ we conclude from (\ref{EQ1}) and (\ref{com1}) that 
	\begin{equation} \label{56}
	|\mu_1(\tilde{u}) -\mu(\tilde{v}_1)| \leq C \delta^{1/2}. 
	\end{equation}
	
	In the rest of proof, we will show $$|\mu(v_1) -\mu(\tilde{v}_1)|,\,\, |\mu_1(u) -\mu_1(\tilde{u})| \leq C \delta^{1/2}.$$
	Then the above inequalities and (\ref{56}) would imply  
	$$|\mu_1(u) -\mu(v_1)| \leq |\mu_1(u) -\mu_1(\tilde{u})| + |\mu_1(\tilde{u})-\mu(\tilde{v}_1)|+|\mu(\tilde{v}_1) -\mu(v_1)|   
	\leq C\delta^{1/2}.$$
	
	First, observe that $v_1$ and $\tilde{v}_1$ have periodic Neumann data $G_1$ on  $H_0$. Hence  by similar arguments as in the proof of (\ref{error2}),
	\begin{equation} \label{sima}
	|\mu(v_1) - \mu(\tilde{v}_1)|\leq C(\Lambda(\delta, e_2) + N^{-1}) \leq C(\delta^{1/2} + N^{-1}) \leq C \delta^{1/2} 
	\end{equation}
	where the last inequality follows from (\ref{this}).

	Next, recall that  $$|\mu_1(\tilde{u}) - \mu(w_1)| \leq C\delta^{1/2}$$ for a solution $w_1$ of (\ref{ww1}). (See (\ref{EQ1}).) Similarly, one can prove  $$|\mu_1(u) - \mu(\tilde{w}_1)| \leq C N^{-1/2} \leq C \delta^{1/2}$$ where $\tilde{w}_1$ solves similar equations as in (\ref{ww1})  in the domain $\{-N\e \leq x \cdot \nu_1 \leq 0\}$, and the last inequality follows from (\ref{this}). Then since $w_1$ and $\tilde{w}_1$ have periodic Neumann data $G_1$ on  $H_0^{\nu_1}$, it corresponds to the case of $\nu =e_2$. Hence by similar arguments as in (\ref{sima}),
	$$|\mu(w_1) - \mu(\tilde{w}_1)|\leq C(\Lambda(\delta, e_2) + N^{-1}) \leq C(\delta^{1/2} + N^{-1}) \leq C \delta^{1/2} $$ and we can  conclude $$|\mu_1(u) -\mu_1(\tilde{u})| \leq  |\mu_1(u) -\mu(\tilde{w}_1)|+|\mu(\tilde{w}_1) -\mu(w_1)|+|\mu (w_1) -\mu_1(\tilde{u})| \leq C \delta^{1/2}.
	$$   
\end{proof}

\medskip

\noindent {\bf Step 2.  Constructing middle region barrier $\omega_\e$ (between $H_{-N \e /2}$ and $H_{-KmN\e }$)}

\medskip

In step 1 we showed that $N\e$ away from the boundary $H^{\nu_1}_0$, $u_\e^{\nu_1}$ is homogenized with average slope approximated by $\mu^N(G_k)$ in each vertical strip $I_k$.  Now more than $N\e$ away from $H^{\nu_1}_0$, we obtain the
second homogenization of $u_\e^{\nu_1}$, whose slope is determined by
$\mu^N (G_k)$, $k=1,..,m$.  Since  
the width of $I_k = N\e$,
the homogenized slopes $\mu^N (G_1)$,.., $\mu^N(G_m)$ are repeated $K$ times in a vertical strip of width $KmN\e$, $N\e$-away from  $H^{\nu_1}_0$.  We will specify 
$$
K:=1/\delta,
$$ but for computational clarity we will keep the symbol $K$.

\medskip

We will construct middle region barrier $\omega_\e$ in the region $\{-KmN\e \leq x_2 \leq -N\e /2\}$. To ensure that  $\omega_\e$ is regular near its Neumann boundary, we introduce a regularization of the original Neumann boundary data $\mu^N(G_k)$ as follows. 

\medskip

 Consider a ball $B_{\delta^{-\alpha_0 /2}N\e}(0)$. If  $I_k \cap H_0$, $I_j \cap H_0 \subset B_{\delta^{-\alpha_0 /2}N\e}(0)$, then  $|k-j| \leq \delta^{-\alpha_0/2}$ and 
\begin{equation}\label{ya}
|G_k(p, x/\e)-G_j(p,  x/\e) | \leq C(1+|p|) (|k-j|\delta)  \leq  C(1+|p|) \delta^{(1-\alpha_0/2)}. 
\end{equation}
Using this fact with Lemma~\ref{perturbation}, we can construct a $C^1$ function  $\Lambda(x)$ on $H_{-N\e/2}$ such that 

\begin{itemize} 
	\item[(a)] 
	$  \Lambda \in C^1(H_{-N\e/2})$ with  $\|\Lambda\|_{C^1}\leq \delta (N\e)^{-1}$;
	
	\item[(b)] $\mu^N(G_k)+\delta^{\alpha_0} \leq \Lambda (x) \leq \mu^N(G_k)+\delta^{\alpha_0}+\delta$ on each  $I_k$;
	
	\item[(c)] $\Lambda(x)$ is periodic with period $mN\e.$
\end{itemize}
Note that when we patch the middle region barrier  $\omega_\e$ with the near-boundary barrier $f_\e$ in step 6, we will need that the average slope of $\omega_\e$ is ``sufficiently'' larger than that of $f_\e$. For this, we will make the average slope of  $\omega_\e$ to be $\mu^N(G_k) + O(\delta^{\alpha_0})$, i.e.,  $(b)$ is to ensure that  $\mu_k(\omega_\e)$ is sufficiently larger than $\mu_k(f_\e)$. 
Also when we show the flatness of barriers in steps 4 and 5, we will localize them in a ``large'' ball of size $\delta^{-\alpha_0/2}N\e$. 

Let $\Sigma:=\{-K m N \e \leq x_2 \leq -N\e/2\}$ and $\omega_\e$ solve the following Neumann boundary problem 
\begin{equation}\label{middle}
\left\{\begin{array}{lll}
F(D^2\omega_\e,x/\e) =0 &\hbox{ in }& \Sigma \\ \\
\dfrac{\partial \omega_{\e}}{\partial x_2} = \Lambda(x)  &\hbox{ on }& H_{-N\e/2} \\ \\
\omega_{\e} =l(x) &\hbox{ on } & H_{-K m N \e}.
\end{array}\right.
\end{equation}

\medskip

\noindent {\bf Step 3. Homogenization of the operator in the middle region} 

\medskip

Next we show, similar to Lemma~\ref{important}, that the second homogenization does not change too much if the domain $\Pi$ is replaced by $\Pi^{\nu_1}$. More precisely, we will show that $\omega_\e$ is close to $\tilde{\omega}_\e$ solving

$$
\left\{\begin{array}{lll}
\bar{F}(D^2\tilde{\omega}_\e) =0 &\hbox{ in }& \{-KmN\e \leq x \cdot \nu_1 \leq  - N\e/2\}\\ \\
\dfrac{\partial \tilde{\omega}_\e}{\partial \nu_1} = \Lambda(x)  &\hbox{ on }& H^{\nu_1}_{-N\e/2} \\ \\
\tilde{\omega}_\e =l(x) &\hbox{ on } & H^{\nu_1}_{-KmN\e}. 
\end{array}\right.
$$
To this end we will first compare $\omega_\e$ with $\bar{\omega}_\e$, with the same Dirichlet data $l$  on $H_{-kmN\e}$ and solving

\begin{equation}
\label{middlebar}
\left\{\begin{array}{lll}
\bar{F}(D^2\bar{\omega}_\e) =0 &\hbox{ in }& \Sigma\\ \\
\dfrac{\partial \bar{\omega}_{\e}}{\partial x_2} = \Lambda(x)  &\hbox{ on }& H_{-N\e/2}.
\end{array}\right.
\end{equation}

\begin{lemma} \label{Lemma4.6}
	For any $\sigma>0$, there exists $N_0$ such that for $N_0>N$ we have
	$$
	|\omega_\e (x) -\bar{\omega}_\e(x)| \leq \sigma \delta N \e  
	\quad \hbox{ in } \quad
	\Sigma.$$
\end{lemma}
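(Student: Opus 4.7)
\noindent\textbf{Proof plan for Lemma~\ref{Lemma4.6}.} The strategy is to rescale at the natural scale $mN\e$ (the period of $\Lambda$), which turns the problem on $\Sigma$ into an interior homogenization problem on a strip whose size depends only on $\delta$, and then to invoke Theorem~\ref{ext}. Since $\nu=e_2$ and $q\in\langle e_2\rangle^{\perp}$, the linear function $l(x)=q\cdot x$ is a stationary solution of both $F(\cdot,y)=0$ and $\bar F(\cdot)=0$ (by $(F2)$ applied to the zero matrix) and its normal derivative on $H_{-N\e/2}$ vanishes. Subtracting $l$ reduces the task to comparing two functions with common zero Dirichlet data on $H_{-KmN\e}$ and common Neumann data $\Lambda$ on $H_{-N\e/2}$.

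Set $y:=x/(mN\e)$ and
\[
\tilde\omega(y):=\frac{(\omega_\e-l)(mN\e y)}{mN\e},\qquad
\tilde\Omega(y):=\frac{(\bar\omega_\e-l)(mN\e y)}{mN\e}.
\]
By the $1$-homogeneity of $F$ and $\bar F$, these satisfy $F(D^2\tilde\omega,mN y)=0$ and $\bar F(D^2\tilde\Omega)=0$ respectively on $\Sigma':=\{-K/m\le y_2\le -1/(2m)\}$, a strip of thickness at most $K/m=1$ (since $K=m=1/\delta$). The shared Neumann data becomes $\tilde\Lambda(y):=\Lambda(mN\e y)$, whose Lipschitz seminorm is bounded by $\|\Lambda\|_{C^1}\cdot mN\e\le (\delta/(N\e))\cdot mN\e=1$, uniform in $\e$ and $N$. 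A linear-ramp barrier in $y_2$, in the spirit of Lemma~\ref{bound}, yields $\|\tilde\omega\|_{\infty},\|\tilde\Omega\|_{\infty}\le C/\delta$, again depending only on $\delta$.

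With these uniform ingredients in place I apply Theorem~\ref{ext} with oscillation parameter $M:=mN$: for any $\sigma>0$, it produces a threshold $M_0=M_0(\delta,\sigma)$ such that $mN\ge M_0$ forces $|\tilde\omega-\tilde\Omega|\le \sigma\delta^2$ on $\Sigma'$. Undoing the rescaling multiplies the error by $mN\e=N\e/\delta$ and gives exactly
\[
|\omega_\e-\bar\omega_\e|\le \sigma\delta N\e \quad \text{on } \Sigma,
\]
with the corresponding threshold on $N$ being $N_0:=M_0/m$. The main obstacle is to keep every quantitative input to Theorem~\ref{ext} uniform in $\e$ and $N$, and this is precisely what the regularization $(a)$--$(c)$ of $\Lambda$ in Step~2 is engineered to provide: the bound $\|\Lambda\|_{C^1}\le \delta/(N\e)$ rescales to a Lipschitz bound of order one for $\tilde\Lambda$, and the periodicity of $\Lambda$ is compatible with the rescaling by $mN\e$. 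The mild point that Theorem~\ref{ext} accommodates zero Dirichlet data in place of the constant $1$ stated there is routine and follows verbatim from the proof in \cite{CK}.
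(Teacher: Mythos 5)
Your proof is correct and follows essentially the same route as the paper: rescale $\omega_\e$ and $\bar\omega_\e$ at the period scale and invoke the interior homogenization statement, Theorem~\ref{ext}. The paper applies Theorem~\ref{ext} directly to $(\delta N\e)^{-1}\omega_\e(N\e x)$ in a single line; you instead rescale by $mN\e$ after first subtracting the linear profile $l$, which differs from the paper's normalization only by fixed powers of $\delta=1/m$ and usefully makes explicit the $L^\infty$ and Lipschitz bounds ($\|\tilde\omega\|_\infty\le C/\delta$, $\|\tilde\Lambda\|_{C^{0,1}}\le 1$) that make the threshold $N_0$ depend only on $\delta$ and $\sigma$ -- details the paper leaves implicit.
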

\begin{proof}
	The proof follows from Theorem~\ref{ext} applied to $(\delta N\e)^{-1} \omega_\e(N\e x)$. 
\end{proof}

Next we compare $\bar{\omega}_\e$ to  $\tilde{\omega}_\e$  to conclude. Here we will use the rotational invariance of $\bar{F}$.

\begin{lemma} \label{Lemma4.7}
	Let $\mathcal{O}$ be the rotation matrix that maps $e_2$ to $\nu_1$. Then
	$$ 
	| \tilde{\omega}_\e (\mathcal{O}x)- \bar{\omega}_\e(x)| \leq  \delta^{1/2}(KmN\e)
	$$
	in $\Sigma  \cap \{|x| \leq \delta^{-1/2} (N\e)\}$.
	
\end{lemma}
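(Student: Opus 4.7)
The plan is to exploit the rotation invariance of $\bar F$ to transplant $\tilde\omega_\e$ onto the original strip $\Sigma$ and compare it with $\bar\omega_\e$. Set $w(x) := \tilde\omega_\e(\mathcal{O}x)$. Since $D^2 w(x) = \mathcal{O}^{T}D^2\tilde\omega_\e(\mathcal{O}x)\mathcal{O}$, rotation invariance gives $\bar F(D^2 w) = \bar F(D^2\tilde\omega_\e(\mathcal{O}x)) = 0$ in $\Sigma$. On $H_{-N\e/2}$ the chain rule yields $\partial_{x_2}w(x) = D\tilde\omega_\e(\mathcal{O}x)\cdot\mathcal{O}e_2 = \partial_{\nu_1}\tilde\omega_\e(\mathcal{O}x) = \Lambda(\mathcal{O}x)$, while $\bar\omega_\e$ satisfies $\partial_{x_2}\bar\omega_\e = \Lambda(x)$; on $H_{-KmN\e}$ we have $w = l(\mathcal{O}x)$ versus $\bar\omega_\e = l(x)$. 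Hence $v := w-\bar\omega_\e$ satisfies both Pucci inequalities $\mathcal{P}^+(D^2 v)\leq 0 \leq \mathcal{P}^-(D^2 v)$ in $\Sigma$, together with a Neumann mismatch on the top face and a Dirichlet mismatch on the bottom face.

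To quantify the mismatches, note that $\mathcal{O}$ is a planar rotation by angle $\sim\theta_1$, so $|\mathcal{O}x - x|\leq C\theta_1|x|$. Combined with $\|\Lambda\|_{C^1}\leq \delta/(N\e)$ and the Lipschitz bound on $l$, within $B_R$ with $R := \delta^{-1/2}N\e$ we obtain $|\partial_{x_2} v| = |\Lambda(\mathcal{O}x) - \Lambda(x)| \leq \delta^{1/2}\theta_1$ on $H_{-N\e/2}\cap B_R$ and $|v| = |l(\mathcal{O}x) - l(x)| \leq C\theta_1 R$ on $H_{-KmN\e}\cap B_R$. Invoking $\theta_1\leq\delta^{5/2}$ from \eqref{parameter}, these reduce to $|\partial_{x_2}v|\leq\delta^3$ and $|v|\leq C\delta^2 N\e$.

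The remaining step converts these boundary estimates into an interior $L^\infty$ bound in $\Sigma\cap B_R$ via a barrier argument modeled on Lemma~\ref{lem:side}, adapted to the homogenized operator $\bar F$ (which still satisfies $(F1)$--$(F3)$). With $M := \|v\|_{L^\infty(\Sigma)}\leq C\cdot KmN\e$ (from the ambient bounds on $l$ and $\Lambda$), one takes a barrier of the form
$$\phi(x) := C\theta_1 R + \delta^{1/2}\theta_1(x_2+KmN\e) + \frac{C_1 M}{R^2}\Bigl(x_1^2 - \tfrac{n\Lambda}{\lambda}(x_2^2 - 4R^2)\Bigr),$$
with $C_1$ large enough so that $\mathcal{P}^+(D^2\phi)\geq 0$, $\partial_{x_2}\phi \geq \delta^{1/2}\theta_1$ on the top, $\phi\geq C\theta_1 R$ on the bottom, and $\phi\geq M$ on $\partial B_{2R}\cap\Sigma$. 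Comparing $\pm v\leq\phi$ via the Pucci version of the comparison principle (Corollary~\ref{thm:cp2} applied to $\bar F$) yields, upon plugging in the parameters, $|v|\leq C(\delta^2 N\e + \delta^3\cdot KmN\e + M/R) = O(\delta N\e)$ in $\Sigma\cap B_R$, which is well within the required $\delta^{1/2}KmN\e = \delta^{-3/2}N\e$.

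The main obstacle is precisely this localization step: since $B_R$ is considerably smaller than the strip width $KmN\e$, one cannot rescale to unit-width and apply Lemma~\ref{lem:side} off-the-shelf; the barrier must be tailored by hand to simultaneously absorb the tangentially inhomogeneous Neumann perturbation on top, the Dirichlet mismatch on the bottom, and the ambient oscillation $M$ on the spherical portion $\partial B_{2R}\cap\Sigma$. The generous slack in the target bound makes this feasible.
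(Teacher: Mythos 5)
Your setup is correct and matches the paper's first two steps: set $v(x)=\tilde\omega_\e(\mathcal{O}x)-\bar\omega_\e(x)$, use rotation invariance of $\bar F$ to get $\bar F(D^2\tilde\omega_\e(\mathcal{O}\cdot))=0$ so that $v$ satisfies both Pucci inequalities, and quantify the Neumann mismatch $|\Lambda(\mathcal{O}x)-\Lambda(x)|\lesssim\theta_1\delta^{1/2}$ on $H_{-N\e/2}\cap B_R$ and the Dirichlet mismatch $\lesssim\theta_1 R$ via $\|\Lambda\|_{C^1}\leq\delta(N\e)^{-1}$ and $|\mathcal{O}x-x|\lesssim\theta_1|x|$. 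These estimates agree (up to constants) with the paper's.

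The gap is in the localization step. Your barrier $\phi$ is modeled on Lemma~\ref{lem:side}, which gains a factor $1/R$ precisely because the ball $B_R$ is \emph{much larger} than the strip width, so that on $\partial B_R$ the height $|x_n|$ is $\ll R$ and the quadratic term $M|x|^2/R^2$ dominates while being $\lesssim M/R$ at $|x|\lesssim 1$. In your configuration the geometry is inverted: the strip width is $KmN\e=\delta^{-2}N\e$ while the ball radius is $2R=2\delta^{-1/2}N\e\ll KmN\e$, so $B_{2R}$ sits strictly inside $\Sigma$. Evaluating your quadratic term $\frac{C_1M}{R^2}\bigl(|x|^2+\tfrac{n\Lambda}{\lambda}(4R^2-x_2^2)\bigr)$ at the center $x=0$ already gives $\approx C_1\tfrac{n\Lambda}{\lambda}M\sim M\sim KmN\e$, not $M/R$. (Using $x_1^2$ instead of $|x|^2$, as you wrote, makes it worse: the barrier vanishes at the south pole $x=(0,-2R)$ of $\partial B_{2R}\cap\Sigma$, so it cannot dominate $M$ on the sphere at all.) Hence the comparison yields only $|v|\lesssim M\approx\delta^{-2}N\e$, weaker than both your claimed $O(\delta N\e)$ and the target $\delta^{1/2}KmN\e=\delta^{-3/2}N\e$. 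Pucci sub/supersolutions have no interior decay from a ball to a concentric smaller ball, so a half-ball barrier cannot produce the required $\delta^{1/2}$ gain; the gain has to come from the full strip geometry.

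The correct remedy, which the paper uses, is the opposite rescaling from the one you rejected: set $\tau=KmN\e$ and work with $\psi(y)=\tau^{-1}v(\tau y)$ in $\tau^{-1}\Sigma$, which is a strip of width $\approx 1$ with $\|\psi\|_\infty=O(1)$. In these coordinates the comparison ball is $B_{\delta^{-1/2}}$, which \emph{is} large relative to the unit-width strip, so a localization lemma of Lemma~\ref{lem:side}/Lemma~\ref{perturbation} type (the paper invokes Lemma 2.9 of \cite{CK}) produces a factor $1/R=\delta^{1/2}$ and hence $|\psi|\lesssim\delta^{1/2}$ on $B_1\cap\tau^{-1}\Sigma$, which contains the estimate region $|y|\leq\delta^{-1/2}N\e/\tau=\delta^{3/2}$. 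Undoing the rescaling gives $|v|\lesssim\delta^{1/2}\tau=\delta^{1/2}KmN\e$ as claimed. Your stated obstacle --- that ``one cannot rescale to unit-width'' because $B_R$ is smaller than the strip --- is a misdiagnosis: after rescaling, the estimate region becomes a tiny subset of $B_1$, and the conclusion of the localization lemma in $B_1$ automatically covers it.
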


\begin{proof}
	Observe that $v(x):= \tilde{\omega}(\mathcal{O}x)$ solves $\bar{F}(v) = 0$ in $\Sigma$ with Neumann boundary data $\Lambda(\mathcal{O}x)$ on $H_{-N\e/2}$ and Dirichlet data $l(\mathcal{O}x)$ on $H_{-KmN\e}$. Note that due to \eqref{parameter} and the $C^1$ bound of $\Lambda$ we have 
	$$
	|\Lambda(KmN\e \mathcal{O}x) - \Lambda(KmN\e x) | \leq \theta_1 |KmN\e x| \sup |D\Lambda| \leq \delta |x|.
	$$
	and $|l(KmN\e \mathcal{O}x) - l(KmN\e x)| \leq KmN\e \theta_1 |x| \leq \delta |x|$.

	\medskip
	
	Hence one can apply Lemma 2.9 of \cite{CK} to $\tau^{-1} v(\tau x)$ and $\tau^{-1}\bar{w}(\tau x)$ in $\tau^{-1} \Sigma$, where $\tau = KmN\e$ and choose $R : = \delta^{-1/2}$ and $\e = 2$ to conclude.

\end{proof}

\medskip

\noindent {\bf Step 4.  Flatness of $\omega_\e$ on $H_{-N\e}$, and the construction of near-boundary barrier $f_\e$}

\medskip

\begin{lemma} \label{ffflat} [Flatness of $\omega_\e$]
	Let  $x_0$ be any point on $H_{-N \e}$. Then for
	$ x \in H_{-N\e} \cap B_{\delta^{-\alpha_0/2}N\e}(x_0)$
	$$
	|\omega_{\e}(x) - \omega_{\e}(x_0) -\partial_1\omega_\e(x_0)(x-x_0)_1| \leq
	C\delta^{1-\alpha_0}  N\e  .
	$$

\end{lemma}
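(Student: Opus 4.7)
I propose to first replace $\omega_\e$ by the homogenized profile $\bar\omega_\e$ from~\eqref{middlebar} via Lemma~\ref{Lemma4.6}, and then exploit the $mN\e$-periodicity of $\Lambda$ in $x_1$ to obtain a uniform-in-$m$ bound on the second tangential derivative of $\bar\omega_\e$ along $H_{-N\e}$.

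Choosing $\sigma$ sufficiently small in Lemma~\ref{Lemma4.6} yields $\|\omega_\e-\bar\omega_\e\|_{L^\infty(\Sigma)}\leq C\delta^{1-\alpha_0}N\e$, which is within the target accuracy. Combined with the uniform interior $C^{1,\alpha}$ estimates for $\omega_\e$ and $\bar\omega_\e$ in balls of radius $N\e/2$ centered on $H_{-N\e}$ (Theorem~\ref{lemma-reg0}), this $L^\infty$ closeness transfers, via standard interpolation between $L^\infty$ and $C^{1,\alpha}$, to an approximate $C^1$ closeness at $x_0$; once one identifies $\partial_{x_1}\omega_\e(x_0)$ with the tangential slope of its homogenized approximation on $H_{-N\e}$, the reduction is complete and it suffices to prove the flatness estimate for $\bar\omega_\e$.

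For $\bar\omega_\e$, since $\Lambda$ is $mN\e$-periodic in $x_1$, the linear Dirichlet datum $l(x)=q\cdot x$ shifts by the constant $q_1\cdot mN\e$ under $x\mapsto x+mN\e\,e_1$, and $\bar F$ is translation-invariant, uniqueness of solutions to~\eqref{middlebar} gives
\[
\bar\omega_\e(x+mN\e\,e_1)=\bar\omega_\e(x)+q_1\cdot mN\e.
\]
Hence $\psi(x):=\bar\omega_\e(x)-q_1 x_1$ is $mN\e$-periodic in $x_1$. Setting $v(y):=(mN\e)^{-1}\psi(mN\e\,y)$, the $1$-homogeneity of $\bar F$ yields $\bar F(D^2 v)=0$ on $\{-K\leq y_2\leq -1/(2m)\}$, with $v$ being $1$-periodic in $y_1$, constant on the Dirichlet face $\{y_2=-K\}$, and $\partial_{y_2}v=\Lambda(mN\e\,\cdot)$ on the Neumann face $\{y_2=-1/(2m)\}$, the latter having $C^1$-norm in $y$ equal to $mN\e\cdot\delta/(N\e)=1$.

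The key analytic step is a uniform-in-$m$ bound $\|D^2_{y_1y_1}v\|_{L^\infty(\{y_2=-1/m\})}\leq C$. After subtracting the linear profile corresponding to the $y_1$-mean of the rescaled Neumann datum, the residual has bounded, zero-mean $C^1$ boundary data; rescaling a ball of radius $1/m$ around any point of $\{y_2=-1/m\}$ to unit size places us within the scope of boundary $C^{2,\alpha}$ Schauder-type estimates for homogeneous uniformly elliptic operators, available via Evans--Krylov for convex $\bar F$ and in the rotation-invariant setting considered here. Unscaling gives $\|D^2_{x_1x_1}\psi\|_{L^\infty(H_{-N\e})}\leq C/(mN\e)=C\delta/(N\e)$, and since $q_1 x_1$ contributes nothing to second derivatives, the same bound holds for $\bar\omega_\e$. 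A Taylor expansion in the tangential variable then delivers, for $|x-x_0|\leq\delta^{-\alpha_0/2}N\e$ on $H_{-N\e}$,
\[
|\bar\omega_\e(x)-\bar\omega_\e(x_0)-\partial_{x_1}\bar\omega_\e(x_0)(x-x_0)_1|\leq\tfrac{1}{2}\|D^2_{x_1x_1}\psi\|_{L^\infty}(\delta^{-\alpha_0/2}N\e)^2=C\delta^{1-\alpha_0}N\e,
\]
which together with the reduction step closes the proof. The main obstacle is the uniform-in-$m$ $C^{1,1}$ boundary estimate for $v$: interior $C^{1,\alpha}$ is not enough, and the rotation-invariance (or convexity) of $\bar F$ is essential to upgrade the regularity up to the Neumann face in a scale-invariant fashion.
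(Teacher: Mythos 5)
Your overall strategy runs parallel to the paper's: reduce to $\bar\omega_\e$ via Lemma~\ref{Lemma4.6}, rescale so that the homogenized constant-coefficient operator $\bar F$ governs a nicer problem, invoke boundary $C^{2,\alpha}$ regularity (correctly flagging that Evans--Krylov/rotation-invariance is what upgrades Theorem~\ref{thm:reg2}'s $C^{1,\alpha}$ to $C^{1,1}$ --- a point the paper glosses over), and Taylor expand. The periodic reduction via $\psi(x)=\bar\omega_\e(x)-q_1x_1$ is a genuinely different and elegant normalization. However, your rescaling by $mN\e$ rather than $KmN\e$ introduces a gap that your sketch does not close.

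The issue is the $L^\infty$ input to the Schauder step. The paper rescales to $\omega_1(x):=(KmN\e)^{-1}\bar\omega_\e(KmN\e\,x)$ on a \emph{unit-width} strip; comparison with the linear sub/supersolutions of Lemma~\ref{bound} gives $\|\omega_1-l\|_{L^\infty}=O(1)$ for free, so Theorem~\ref{thm:reg2} (at scale $1$, with constant linear in the Neumann data's $C^1$ norm $\delta Km$) directly yields $\|\omega_1\|_{C^{1,1}}\lesssim\delta Km$. In your version, $v$ lives on the strip $\{-K\leq y_2\leq -1/(2m)\}$ of width $K=1/\delta$, and the same comparison gives only $\|v\|_{L^\infty}=O(K)$. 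Subtracting the linear profile $L$ corresponding to the $y_1$-mean of $\tilde\Lambda$ leaves a residual $v-L$ for which the barrier argument still gives $|v-L|\leq C(y_2+K)=O(K)$ at the Neumann face; even invoking the homogenization error estimate from Theorem~\ref{main-cell-2}(b) only improves this to $O(K^{1/2})$. Neither is the $O(1)$ bound needed for the claimed uniform-in-$m$ estimate $\|D^2_{y_1y_1}v\|_{L^\infty}\leq C$: tracking the surplus through the Taylor step would produce a right-hand side of order $\delta^{-\alpha_0}N\e$ or $N\e$ rather than $\delta^{1-\alpha_0}N\e$. (Relatedly, the ``rescaling a ball of radius $1/m$ to unit size'' is not the useful scale here --- to exploit the $O(1)$ $C^1$-bound on the Neumann data one would apply the boundary estimate at scale~$1$, but the missing $L^\infty$ control persists at that scale as well.) The paper's $KmN\e$-rescaling absorbs the large width into the data norm $\delta Km$ where it is harmless; your $mN\e$-rescaling keeps the width in the solution's $L^\infty$ size where it is not.

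Two smaller points: your reduction step converting $\partial_1\bar\omega_\e(x_0)$ to $\partial_1\omega_\e(x_0)$ via interpolation between $L^\infty$ and $C^{1,\alpha}$ is plausible but needs the $C^{1,\alpha}$ norms at the correct scale to be checked; and Theorem~\ref{lemma-reg0} is an interior estimate, so citing it for balls of radius $N\e/2$ centered \emph{on} $H_{-N\e}$ (which reach the Neumann boundary $H_{-N\e/2}$) needs the boundary version instead.
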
  

\begin{proof}

	Due to Lemma~\ref{Lemma4.6}, it is enough to show above lemma for $\bar{\omega}_\e$. 
	Let $\omega_1(x):= (KmN\e)^{-1}\bar{\omega}_\e((KmN\e) x)$, then   it solves
	$$
	\left\{\begin{array}{lll}
	\bar{F}(D^2 \omega_1) =0 &\hbox{ in }& \{-1 \leq x_2 \leq -\frac{1}{2Km}\}\\ \\
	\dfrac{\partial \omega_1}{\partial x_2} = \Lambda(KmN\e x)  &\hbox{ on }& H_{-\frac{1}{2Km}} \\ \\
	\omega_1(x)=l(x)+C &\hbox{ on } & H_{-1}
	\end{array}\right.
	$$
	we know that $\|\Lambda\|_{C^1}\leq \delta (N\e)^{-1}$, so the above Neumann boundary data has $C^1$ norm of $\delta Km$.
	From Theorem~\ref{thm:reg2}, we have that 
	$$
	\|\omega_1\|_{C^{1, 1}} \leq  C\delta Km .
	$$
	Hence
	\begin{equation}\label{flat0}
	|\omega_1(x)-\omega_1(x_0) - \partial_1 \omega_1(x_0)\cdot(x-x_0)| \leq C\delta Km |x-x_0|^{2} 
	\end{equation}
	which can be written in terms of $\bar{\omega}_\e$,

	$$
	\begin{array}{lll}
	|\bar{\omega}_\e(x) - \bar{\omega}_\e(x_0)  - \partial_1 \bar{\omega}_\e(x_0)\cdot (x-x_0)|  &\leq & C\delta (Km)^2 (N\e) |(KmN\e)^{-1} (x-x_0)|^{2}   \\ \\
	&\leq & C\delta  (\delta^{-\alpha_0/2})^{2} (N\e) = C\delta^{1-\alpha_0}N\e 
	\end{array}
	$$
	in $\delta^{-\alpha_0/2}N\e$-neighborhood of $x_0$. \end{proof}


Now we construct the near-boundary barrier $f_\e $ using $\rho_\e$.  Let $f_\e$ solve
$$
\left\{\begin{array}{lll}
F(D^2 f_\e, x/\e) =0 &\hbox{ in }& \{-N\e\leq x_2 \leq 0\};\\ \\
f_{\e}=\omega_{\e} + \delta^{1-\alpha_0}N\e   &\hbox{ on }& H_{-N\e };\\ \\
\dfrac{\partial f_{\e}}{\partial x_2} = G (Df_\e, \frac{x}{\e}  ) &\hbox{ on }& H_0.
\end{array}\right.
$$

\medskip

\noindent {\bf Step 5.  Flatness of $f_\e$}

\medskip

In this step we compare $\mu^N(G_k)$ given in \eqref{slope of v_k} with $\mu_k(f_\e)$ given in Definition~\ref{averaged_slope}. For simplicity we put $k=1$. 
Note that Lemma~\ref{lem:side},   Lemma~\ref{ffflat}, and Lemma~\ref{perturbation} with (\ref{ya}) 
imply that 
\begin{equation}\label{dede}
|\mu^N(G_1) -\mu_1(f_\e)| \leq C(\delta^{1-\alpha_0/2}  + \delta + \delta^{1-\alpha_0}) \leq C \delta^{1-\alpha_0}
\end{equation}
Also from Lemma~\ref{ffflat} and the definition of $f_\e$ it follows that $f_\e$ is close to a linear function

\begin{equation} \label{onH}
|f_\e (x)-L_0(x)| \leq C \delta^{1-\alpha_0}N\e   \,\,\, \hbox{ on
}\,\,\, H_{-N\e} \cap B_{ \delta^{-\alpha_0/2}N \e}(0),
\end{equation}
where $L_0(x): =f_\e(-N\e e_2)+ \mu^N(G_1)(x_2+N\e)+ \partial_1 f_\e(-N\e e_2)x_1.$ Then Lemma~\ref{Claim 22}, 
(\ref{onH}) and Lemma~\ref{lem:side}
applied to the rescaled function $(N\e)^{-1}f_{\e}(N\e x) $
in the region $\{-1\leq x_2 \leq -1/2\} \cap B_{ \delta^{-\alpha_0/2}}$ yield that
\begin{equation} \label{de}
|f_{\e}-L_0| \leq
C (\delta^{1-\alpha_0}  + \delta^{(1-\alpha_0/2)} ) N \e +C\e \leq C\delta^{1- \alpha_0} N\e
\end{equation}
in  $\{-N\e\leq x_2 \leq -N\e/2\} \cap B_{ \delta^{-\alpha_0/2}N \e}(0)$, where the last inequality follows from (\ref{this}).

\medskip

Before we proceed to the next step, observe that the $C^1$ regularity of $\Lambda$, Theorem~\ref{thm:reg2}, as well as Lemma~\ref{Lemma4.6} yield that

\begin{equation}\label{flat3}
|\omega_\e(x_1, x_2) - \omega_\e(x_1, -N\e) -\Lambda(x)(x_2+N\e)| \leq C\delta^{1-\alpha_0}N\e \quad \hbox{ on } \{-N\e \leq x_2 \leq -\frac{N\e}{2}\}.
\end{equation}

\medskip 

\noindent {\bf  Step 6. Patching up  } 

\medskip

Let $h(x):= l(x) + (\mu(\omega_\e) - C\delta^{1/2})(x_2+KmN\e)$ where $C>0$ is a  constant given as in (b) of Theorem~\ref{main-cell-2}, and $l(x)=l(x_1)$ is a linear function chosen so that $h(x)=q \cdot x$ on $H_{-1}$. 
We define 
\begin{equation*}
\rho_\e:=
\left\{\begin{array}{ll}
h &\hbox{ in }  \quad \{-1 \leq x_2 \leq -KmN\e\}, \\  \\ \omega_\e &\hbox{ in }  \quad \{-KmN\e \leq x_2 \leq -N\e /2\}.
\end{array}\right.
\end{equation*}
Since $\Lambda$ is $mN\e$-periodic,  (b) of Theorem~\ref{main-cell-2} implies that on  $\{x_2 = -KmN\e\}$,
$$
\partial_{x_2} \omega_ \e \geq \mu(\omega_\e) -
C \Lambda(1/K, e_2) = \mu(\omega_\e) -CK^{-1/2}  = \mu(\omega_\e) -C\delta^{1/2} =\partial_{x_2} h.$$ 
Thus it follows that $F(D^2 \rho_\e, \frac{x}{\e})\leq 0$ in $\{-1 \leq x_2 \leq -N\e/2\}$.   

\medskip

\medskip

Due to the flatness estimates \eqref{de} and \eqref{flat3}, we can approximate $f_\e$ and $\rho_\e$ by linear functions, respectively with normal derivatives of $\mu^N (G_k)$ and $\Lambda(x)$, with the error of $O(\delta^{1-\alpha_0} N\e)$. Here recall $\Lambda(x)$ was constructed so that $\Lambda(x) \geq \mu^N(G_k) + \delta^{\alpha_0}$, and $\alpha_0$ is a constant satisfying $ \alpha_0 \leq 1/2$. Then since $f_\e = \rho_\e + \delta^{1-\alpha_0}N\e$  on $\{x_2 = -N\e\}$,  
\begin{equation} \label{jj}
\rho_\e > f_\e \hbox{ on }  \{x_2 =-N\e /2\} \,\,\hbox{ and } \,\, f_\e > \rho_\e \hbox{ on } \{x_2 = -N\e\}.
\end{equation}
Define $\underline{\rho}_{\e} $ as follows:
$$
\underline{\rho}_{\e}:= \left\{\begin{array}{lll}
\rho_{\e}  &\hbox{ in } & \{-1 \leq x_2 \leq -N\e\},\\ \\
\min (\rho_\e, f_\e) &\hbox{ in } & \{ -N\e \leq x_2 \leq -N\e /2\},\\ \\
f_{\e} &\hbox{ in }& \{-N\e/2 \leq x_2 \leq 0\},
\end{array}\right.
$$
Then by (\ref{jj}),
$\underline{\rho}_\e$ is a viscosity supersolution of $(P)_{\e,e_2, 0,q}$ in $\{-1\leq x_2 \leq 0\}$. Let us mention that, due to Lemma~\ref{important}, Lemma~\ref{Lemma4.6} and Lemma~\ref{Lemma4.7}, a small perturbation of these barriers also yield a supersolution in $\{-1\leq x\cdot\nu_1 \leq 0\}$. 
Similarly, one can construct a subsolution $\bar{\rho}_{\e}$ of
$(P)_{\e,e_2, 0,q}$ by replacing $\Lambda(x)$ given in the construction of
$\rho_{\e}$ by $\tilde{\Lambda}(x) \leq  \mu^N(G_k) -\delta^{\alpha_0}$. Then by Lemma~\ref{important} and Lemma~\ref{Lemma4.7} 
\begin{equation}\label{comp1}
|\mu(u_\e^{\nu_1}) - \mu(\underline{\rho}_\e)| \leq |\mu(\bar{\rho}_\e) - \mu(\underline{\rho}_\e)| +C\delta^{1/2} \leq C(\delta^{1/2} + \delta^{\alpha_0}) \leq  C \delta^{\alpha_0} = C \delta^{1/2}
\end{equation}
where the last inequality follows by choosing $\alpha_0 =1/2$.

We denote $\bar{\rho}_{\e} = \bar{\rho}^{\nu_1}_{\e}$ and  $\underline{\rho}_{\e} = \underline{\rho}_{\e}^{\nu_1}$ indicating that they are obtained from the direction $\nu_1$, i.e., with the scale $N\e$.

\medskip

\noindent {\bf Step 7. Comparing the solutions $u_\e^{\nu_1}$ and  $u_\e^{\nu_2}$ : Proof  of Theorem~\ref{continuity} (a) }

\medskip 

Parallel arguments as in the previous steps apply to the other direction $\nu_2$. Recall that
$$
\theta_2 =|\nu_2-e_2| < \theta_1,  \quad M= [
\frac{\delta}{\theta_2}] > N.
$$
Then similarly as in the direction $\nu_1$, we can construct barriers $\bar{\rho}^{\nu_2}_{\e}$ and
$\underline{\rho}^{\nu_2}_{\e}$ such that

\begin{equation} \label{comp2}
|\mu(u_\e^{\nu_2}) - \mu(\underline{\rho}^{\nu_2}_\e)| \leq |\mu(\bar{\rho}^{\nu_2}_\e) - \mu(\underline{\rho}^{\nu_2}_\e)| +C\delta^{1/2} \leq C\delta^{1/2}.
\end{equation} Here  their corresponding Neumann
boundary conditions satisfy
$$
\mu^M(G_{k}) -\delta^{\alpha_0} -\delta \leq\,\,\,
\frac{\partial}{\partial x_2 } \bar{\rho}^{\nu_2}_{\e} \,\, \hbox{; }\,\,
\frac{\partial}{\partial x_2 } \underline{\rho}^{\nu_2}_{\e} \,\,\leq
\mu^M (G_{k}) +\delta^{\alpha_0} +\delta \quad \hbox{ on }
H_{-M\e}\cap I_k,
$$
where $\alpha_0 =1/2$, and the  respective derivatives of $\bar{\rho}^{\nu_2}_{\e}$ and $\underline{\rho}^{\nu_2}_{\e}$  are taken as a limit from the
region $\{-1\leq x_2 <-M\e\}$.

Thus to compare $\mu(u_\e^{\nu_1})$ and $\mu(u_\e^{\nu_2})$, we compare $\mu^N(G_k)$ and $\mu^M(
G_k)$. Recall that we define $\mu^M(G_{k})$ similarly as   $\mu^N(G_{k})$.   More precisely,  $\mu^M(G_{k})$ is the slope of the linear
approximation of $v_{k}^{M,\e}$, where $v_k^{M,\e}$ is defined similarly as in \eqref{vk} in the region $\{-M\e \leq x_2 \leq 0\}$ with the
boundary condition
$$\partial_{x_2} v_{k}^{M,\e}(x) =
G_k(D v_{k}^{M,\e}, x/\e) \quad \hbox{ on } \quad
H_0$$ and  $v_{k}^{M,\e} =l(x)$ on $H_{-M\e}$. 
Since $G_k$ is periodic on the Neumann boundary, it
corresponds to the case of Neumman boundary with rational normal,
passing through the origin. Hence by applying arguments as in the proof of (\ref{error222}), 
\begin{equation}
|\mu^N (G_{k}) - \mu^M(G_{k}) | \leq
C\Lambda(1/N, e_2) =C \inf_{0<k<1} \{1/N^k + 1/N^{1-k}\} =C /N^{1/2}  .\label{eqn5}
\end{equation}

Now we prove the following lemma using the estimate (\ref{eqn5}). 
\begin{lemma} For any $\e$ satisfying (\ref{order202}),
	$$|\mu(u^{\nu_1}_\e) -\mu(u^{\nu_2}_\e)| \leq C \delta^{1/2}.$$ 
\end{lemma}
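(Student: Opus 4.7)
The plan is to reduce, via the triangle inequality
\[
|\mu(u^{\nu_1}_\e) - \mu(u^{\nu_2}_\e)| \le |\mu(u^{\nu_1}_\e) - \mu(\underline\rho^{\nu_1}_\e)| + |\mu(\underline\rho^{\nu_1}_\e) - \mu(\underline\rho^{\nu_2}_\e)| + |\mu(\underline\rho^{\nu_2}_\e) - \mu(u^{\nu_2}_\e)|,
\]
to bounding only the middle term, since the first and third summands are already $\le C\delta^{1/2}$ by \eqref{comp1} and \eqref{comp2}. Thus everything comes down to showing $|\mu(\underline\rho^{\nu_1}_\e) - \mu(\underline\rho^{\nu_2}_\e)| \le C\delta^{1/2}$.

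To do this, I would exploit that the two barriers share essentially the same two-scale architecture in the strip $\Pi = \{-1 \le x_2 \le 0\}$ and differ only in the mesoscale partition width ($N\e$ versus $M\e$) and in the associated first-stage slopes $\mu^N(G_k)$ versus $\mu^M(G_k)$ entering the regularized Neumann data $\Lambda$ used to build $\omega^{\nu_i}_\e$. By \eqref{eqn5} together with \eqref{this}, $|\mu^N(G_k) - \mu^M(G_k)| \le C/N^{1/2} \le C\delta^{3/4}$, so the two Neumann data agree pointwise (after the canonical matching of strip indices $k$) within $C\delta^{1/2}$. First I would invoke Lemma~\ref{Lemma4.6} and Lemma~\ref{Lemma4.7} to replace each middle-region barrier $\omega^{\nu_i}_\e$ by the solution of the homogenized problem for $\bar F$ in a common rotated strip, incurring an error of $O(\delta^{1/2})$. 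Then the stability estimate Lemma~\ref{perturbation}, applied to the homogenized pair with $O(\delta^{1/2})$ Neumann perturbation on a sufficiently large ball, yields $|\mu(\omega^{\nu_1}_\e) - \mu(\omega^{\nu_2}_\e)| \le C\delta^{1/2}$. Patching in the near-boundary pieces $f^{\nu_i}_\e$ does not affect the macroscopic slope at this precision, because by \eqref{dede} their respective slopes on $I_k$ and $J_k$ track $\mu^N(G_k)$ and $\mu^M(G_k)$ within $C\delta^{1/2}$, and thus remain compatible with the middle-region averages. Combining the two pieces then delivers $|\mu(\underline\rho^{\nu_1}_\e) - \mu(\underline\rho^{\nu_2}_\e)| \le C\delta^{1/2}$ as required.

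The main obstacle will be the alignment of the two barriers across the mismatched mesoscale partitions $\{I_k\}$ (width $N\e$) and $\{J_k\}$ (width $M\e$, with $N \le M$), together with the rotation of both middle regions into a common reference strip. The pointwise comparison between the two regularized Neumann data only becomes meaningful after both are identified with their $\bar F$-homogenized limits, which is where the rotational invariance of $\bar F$ enters crucially through Lemma~\ref{Lemma4.7}. Once both Neumann problems are posed on the same strip with comparable data, Lemma~\ref{perturbation}, used in conjunction with the oblicity constant from $(G3)$ and the $C^{1,\alpha}$ estimates of Theorem~\ref{thm:reg2}, converts the $O(\delta^{1/2})$ data perturbation into the desired slope bound.
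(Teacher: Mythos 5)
Your overall strategy matches the paper's: reduce to $|\mu(\underline\rho^{\nu_1}_\e) - \mu(\underline\rho^{\nu_2}_\e)|$ via \eqref{comp1}, \eqref{comp2}, pass to homogenized middle barriers $\bar\omega_\e$, $\bar\omega^{\nu_2}_\e$, and exploit the closeness of the first-stage slopes $\mu^N(G_k)$, $\mu^M(G_k)$ from \eqref{eqn5}. However, there are two concrete problems with the way you finish.

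First, the invocation of Lemma~\ref{Lemma4.7} at this stage is misplaced. Both middle-region barriers $\bar\omega_\e$ (on $\{-KmN\e\le x_2\le -N\e/2\}$) and $\bar\omega^{\nu_2}_\e$ (on $\{-KmM\e\le x_2\le -M\e/2\}$) are already defined in the flat strip $\Pi(e_2,0)$; neither lives in $\Pi^{\nu_1}$ or $\Pi^{\nu_2}$. The rotation of Lemma~\ref{Lemma4.7} is needed only to transfer the barrier estimates from $\Pi$ back to $\Pi^{\nu_i}$, which has already been consumed in establishing \eqref{comp1} and \eqref{comp2}. There is no rotation left to do in the final comparison; what remains is a purely flat-strip problem.

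Second, and more importantly, you flag the mesoscale mismatch ($N\e$ versus $M\e$, hence strips of width $KmN\e$ versus $KmM\e$) as ``the main obstacle'' but the proposed resolution---``Lemma~\ref{perturbation} on a sufficiently large ball''---does not actually address it. Lemma~\ref{perturbation} compares two solutions posed on the \emph{same} domain with close boundary data; here the two strips have different widths, so the perturbation lemma cannot be applied directly, and ``a sufficiently large ball'' does not fix that. The paper's route is to first replace $\bar\omega_\e,\ \bar\omega^{\nu_2}_\e$ by the auxiliary solutions $h_1,\ h_2$ whose Neumann data are the piecewise-constant $\mu^N(G_k)$, $\mu^M(G_k)$ (this costs $C\delta^{1/2}$ as in \eqref{eqn7}), and then to observe that $h_1$, $h_2$ are scaled versions of the periodic rational-direction problem through the origin. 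Theorem~\ref{main-cell-2}(b) then guarantees a \emph{scale-independent} limiting slope and an explicit convergence rate $\Lambda(\delta,e_2)\le C\delta^{1/2}$ for each, and the limiting slopes differ by at most $C/N^{1/2}\le C\delta^{1/2}$ by \eqref{eqn5} and \eqref{this}, giving \eqref{eqn6}. That is the device that absorbs the scale mismatch. If you want to salvage the Lemma~\ref{perturbation} route, you must first rescale both strips to a common reference geometry (noting the Neumann data then both have period $1/K=\delta$ at matching locations) and separately control the finite-$K$ error, which is exactly what the Theorem~\ref{main-cell-2} step does for you; your proposal leaves this out.
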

\begin{proof}
	By the construction of the viscosity supersolution $\underline{\rho}^{\nu_1}_\e$ and Lemma~\ref{Lemma4.6}, 
	\begin{equation} \label{69}
	|\mu(\underline{\rho}^{\nu_1}_\e) -\mu(\bar{\omega}_\e)| \leq C \delta^{1/2}
	\end{equation}
	where $\bar{\omega}_\e$ is given as in (\ref{middlebar}). Similarly, we get
	\begin{equation}\label{70}
	|\mu(\underline{\rho}^{\nu_2}_\e) -\mu(\bar{\omega}^{\nu_2}_\e)| \leq C \delta^{1/2}
	\end{equation}
	where $\bar{\omega}^{\nu_2}_\e$  solves  
	\begin{equation*}
	\left\{\begin{array}{lll}
	\bar{F}(D^2\bar{\omega}^{\nu_2}_\e) =0 &\hbox{ in }& \{-K m M\e \leq x_2 \leq -M\e/2\};\\ \\
	\dfrac{\partial\bar{\omega}^{\nu_2}_{\e}}{\partial \nu} = \Lambda^{\nu_2}(x)  &\hbox{ on }& H_{-M\e/2} ;\\ \\
	\bar{\omega}^{\nu_2}_{\e} =l(x) &\hbox{ on } & H_{-K m M \e}.
	\end{array}\right.
	\end{equation*}
	Here $\Lambda^{\nu_2}(x) $ is constructed similarly as  $\Lambda(x)$ with $N$ replaced by $M$, i.e., with $\mu^N(G_k)$ replaced by $\mu^M(G_k)$. 
	Then  by \eqref{comp1},  \eqref{comp2}, \eqref{69}  and \eqref{70},  it suffices to prove 
	$$|\mu(\bar{\omega}_\e) - \mu(\bar{\omega}^{\nu_2}_\e)| \leq C \delta^{1/2}.$$
	Recall that $|\Lambda(x) -\mu^N(G_k)| \leq \delta^{\alpha_0} +\delta $ on $I_k$, and similarly,
	$|\Lambda^{\nu_2}(x) -\mu^M(G_k)| \leq \delta^{\alpha_0} +\delta $ on $I_k$, with $\alpha_0=1/2$. Hence 
	\begin{equation} \label{eqn7}
	|\mu(\bar{\omega}_\e)-\mu(h_1) |, \,\, |\mu(\bar{\omega}^{\nu_2}_\e)-\mu(h_2) | \leq C \delta^{1/2}
	\end{equation}
	for solutions $h_1$ and $h_2$ of
	$$\left\{\begin{array}{lll}
	\bar{F}(D^2 h_1) = 0 & \hbox { in } & \{ -KmN\e \leq x_2 \leq -N\e/2\}\\ \\
	\dfrac{\partial h_1}{\partial\nu } = \mu^N(G_{k}) &\hbox{
		on } & H_{-N\e/2}\cap I_k
	\\ \\
	h_1=l(x)  &\hbox{ on } & H_{-KmN\e}
	\end{array}\right.
	$$
	and
	$$
	\left\{\begin{array}{lll}
	\bar{F}(D^2 h_2) = 0 & \hbox { in } & \{ -KmM\e \leq x_2 \leq -M\e/2\}\\ \\
	\dfrac{\partial h_2}{\partial\nu }
	=\mu^M(G_{k})&\hbox{ on } & H_{-M\e/2}\cap I_k \\ \\
	h_2=l(x)  &\hbox{ on } & H_{-KmM\e}.
	\end{array}\right.
	$$
	Note that $h_1$ has a periodic Neumann condition on $H_{-N\e/2}$ with period $m
	N\e$, and also $h_2$ has a periodic Neumann condition on
	$H_{-M\e/2}$ with period $m M \e$. Hence they correspond to the case of periodic Neumann boundary data, i.e., the case of Neumann boundary with  a normal direction $e_2$, and  passing through the
	origin. Hence by Theorem~\ref{main-cell-2} with \eqref{eqn5} and $K =1/\delta$, we get
	\begin{equation} \label{eqn6}
	|\mu(h_1)-\mu(h_2)| \leq
	\Lambda(\delta, e_2)+C/N^{1/2} \leq
	C(\delta^{1/2}+(1/N)^{1/2}) \leq C \delta^{1/2}
	\end{equation}
	where the last inequality follows from (\ref{this}).
	Then we can conclude from (\ref{eqn7}) and (\ref{eqn6}).

\end{proof}

\medskip

\begin{remark} \label{generaln} For the dimension $n>2$ and $\nu = e_n$, for a fixed $m\in \NN$ and $\delta = \frac{1}{m}$ let us define
	$$
	G_i(x_1,...,x_{n-1}, x_n) := G(x_1,..., x_{n-1}, \delta(i-1))\quad \hbox{ for } i=0,...,m
	$$
	and
	$$
	I_{k_1, k_2, ..., k_{n-1}}:= [(k_1-1)N \e, k_1 N  \e]\times ...
	\times [(k_{n-1}-1) N \e, k_{n-1} N \e] \times \R.
	$$
	Then parallel arguments as in steps 1 to 9 would apply to yield the
	results in $\R^n$.
	
\end{remark}

\appendix

\section{Appendix} 

In this section we state quantitative results on distribution of $\e\Z^n$ near a hyperplane. We present an improved version from those introduced in \cite{CKL} . Recall that $\nu \in S ^{n-1}$ is a rational direction if $\nu \in \R \Z^n$, otherwise $\nu$ is an irrational direction.   For properties of irrational directions, let us discuss the averaging property of the sequence $(nx)_n$
mod $1$, for an irrational number $x$.   We are particularly interested in the
estimates on the rate of convergence of the sequence $(nx)_n$ to the
uniform distribution (Definition~\ref{discrepancy}). Note that the estimates in Lemma~\ref{lemma-M} below are  improved from the estimates in Lemma 2.7 of \cite{CKL}. We begin with
recalling the notion of equi-distribution.

\vspace{10pt}

\noindent $\bullet$ A bounded sequence $(x_1, x_2, x_3...)$ of
real numbers is said to be {\it equi-distributed} on an interval $[a,b]$
if for any $[c,d] \subset [a,b]$ we have
$$ \lim_{n \to \infty} \frac{|\{x_1,...,x_n\} \cap [c,d]|}{n}
=\frac{d-c}{b-a}.$$
Here $|\{x_1,...,x_n\} \cap [c,d]|$ denotes the number of elements of $\{x_1,...,x_n\} \cap [c,d]$.

\vspace{10pt}

\noindent $\bullet$ The sequence $(x_1, x_2, x_3,...)$ is said to be
equi-distributed modulo $1$ if $(x_1-[x_1], x_2-[x_2], ...)$ is
equi-distributed in the interval $[0,1]$.

\begin{lemma}[\cite{w}, Weyl's equidistribution theorem]\label{lem-weyl-1}
	If $a$ is an irrational number, the sequence  $(a, 2a, 3a, ...)$ is
	equi-distributed modulo $1$.
\end{lemma}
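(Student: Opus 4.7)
The plan is to prove this by the standard Weyl criterion approach. First I would invoke (or prove) Weyl's criterion: a sequence $(x_n)_{n\geq 1}$ of real numbers is equi-distributed modulo $1$ if and only if for every nonzero integer $k$,
$$\lim_{N\to\infty} \frac{1}{N}\sum_{n=1}^{N} e^{2\pi i k x_n} = 0.$$
The substantive direction is that vanishing of these exponential sums implies equi-distribution; one shows this by approximating the indicator $\mathbf{1}_{[c,d]}$ of any subinterval uniformly (in an $L^1$ sense) from above and below by continuous $1$-periodic functions, then approximating those in sup norm by trigonometric polynomials via Stone-Weierstrass (equivalently, via the Fej\'er kernel). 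Linearity and the exponential-sum hypothesis then pin down the limit of the Riemann sums of $\mathbf{1}_{[c,d]}$ along $(x_n)$.

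Granted the criterion, the proof for $x_n = n a$ reduces to a direct geometric-series computation. Fix a nonzero integer $k$. Since $a$ is irrational, $ka$ is not an integer, so $z := e^{2\pi i k a} \neq 1$, and
$$\sum_{n=1}^{N} e^{2\pi i k n a} \;=\; \sum_{n=1}^{N} z^n \;=\; \frac{z(z^N - 1)}{z-1},$$
which has absolute value bounded by $\tfrac{2}{|z-1|}$, a constant independent of $N$. Dividing by $N$ and letting $N\to\infty$ yields $0$, so Weyl's criterion is satisfied and the conclusion follows.

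The main (and only) obstacle is the proof of Weyl's criterion itself, since the geometric-series step above is immediate. The key technical point in the criterion is the sandwich/approximation argument controlling the discontinuity of $\mathbf{1}_{[c,d]}$ at its two endpoints: given $\eta>0$, one constructs continuous periodic $\varphi^{\pm}$ with $\varphi^{-}\leq \mathbf{1}_{[c,d]}\leq \varphi^{+}$ and $\int(\varphi^{+}-\varphi^{-})<\eta$, approximates each $\varphi^{\pm}$ uniformly by trigonometric polynomials, and lets $\eta\to 0$ after passing to the limit in $N$. Since the statement is classical and the paper cites \cite{w}, I would refer to Weyl's original paper rather than reproducing the full argument, using it as a black box to derive the quantitative discrepancy estimates needed in Lemma~\ref{lemma-M} and in the definition of $\omega_\nu(N)$ used throughout Section 4.
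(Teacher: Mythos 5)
Your proposal is mathematically correct, and it coincides with how the paper handles this statement: the paper provides no proof at all, citing Weyl's 1910 paper and treating equidistribution of $(na)_n$ as a classical black box from which to derive the discrepancy bounds $\omega_\nu(N)$ used in Lemma~\ref{lemma-M}. Your sketch of the standard route (Weyl's criterion reducing the problem to the geometric-sum bound $\bigl|\sum_{n=1}^N e^{2\pi i k n a}\bigr|\leq 2/|e^{2\pi i k a}-1|$, with the criterion itself established by sandwiching $\mathbf{1}_{[c,d]}$ between continuous periodic functions and approximating those by trigonometric polynomials) is exactly the canonical proof, and your final decision to defer to \cite{w} rather than reproduce it in full matches the paper's own choice.
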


To discuss quantitative versions of Lemma~\ref{lem-weyl-1}, we introduce the notion of  discrepancy.  The
following definition is  from the book \cite{kn}.

\begin{definition}\label{discrepancy}
	Let $(x_k)_k$ be a sequence in $\RR$. For a subset $E
	\subset [0,1]$, let $$A(E;N) =|\{x_n: 1 \leq n \leq N\} \cap E|$$ i.e, $A(E;N)$  denotes the number of points $\{x_n\}$,
	$1\leq n \leq N$, that lie in $E$.
	\begin{itemize}
		\item[(a)]The sequence $(x_n)_n$ is said to be {\it uniformly distributed} modulo $1$ in $\R$ if
		$$ \lim_{N\to\infty} \dfrac{A(E;N)}{N} = \mu(E) $$
		for all $E = [a,b) \subset [0,1]$. Here $\mu$ denotes the Lebesgue measure.\\
		\item[(b)]For $x\in [0,1]$ and $N \in \NN$, the discrepancy $D_x(N)$ is defined as follows: 
		$$
		D_x(N) = \sup_{E=[a,b)} \Big|\dfrac{A(E;N)}{N}-\mu(E)\Big|,
		$$
		where $A(E;N)$ is defined with the sequence $(kx)_k$ modulo
		$1$.
	\end{itemize}
\end{definition}
It easily follows from Lemma~\ref{lem-weyl-1} that the sequence
$(x_k)_k=(kx)_k$ is uniformly distributed modulo $1$ for any
irrational number $x\in\R$. In particular $D_x(N)$ converges to zero
as $N\to\infty$.

\vspace{10pt}

Next, we apply the discrepancy function to multi-dimensions. For   a
direction $\nu =(\nu_1, ..., \nu_n) \in \mathcal{S}^{n-1}$, let
$\nu_i$ be the component with the biggest size, i.e.,
$$|\nu_i|=\max\{|\nu_j|: 1\leq j \leq n\}$$
(if there are multiple components then we choose $\nu_i$ with
the largest index $i$).
Let $H$ be the hyperplane in $\R^n$, which passes through $0$
and is normal to $\nu$, i.e.,
$$H =\{x \in \R^n : x\cdot \nu=0\}.$$
Since $\nu_i \neq 0$,  there exists $m=m(\nu)$ such that
$(1,...,1,m,1,...,1) \in H$, i.e.,
$$
(1,...,1,m,1,...,1)\cdot \nu=0
$$
where $m$ is the i-th
component of $(1,...,1,m,1,...,1)$. Note that $m$ is irrational iff $\nu$ is an irrational direction.  Define
\begin{equation}\label{mode}
\omega_{\nu}(N):=D_m(N).
\end{equation}
Note that if  $\nu$ is an irrational direction, then $$\omega_{\nu}(N)\to 0
\hbox{ as } N \to \infty.$$

\vspace{10pt} Now we are ready to state our quantitative estimate on
the averaging properties of the vector sequence $(n\nu)_n$ with an
irrational direction $\nu$.  Recall that for $\nu \in \mathcal{S}^{n-1}$ and $\tau \in \R^n$, 
$$\Pi(\nu,\tau)=
\{x: -1\leq (x-\tau)\cdot\nu \leq 0\}$$
and $$H_{0} =\{x: (x-\tau)\cdot\nu = 0\}.$$ Also  for $x_0 \in \Pi(\nu, \tau) $ and $d:= \rm{dist}(x_0, H_0)$, we  denote   
$$H_{d}= \{x: (x-\tau)\cdot \nu =d\}= \{x: (x-x_0)\cdot \nu =0\}.$$

\begin{lemma} \label{lemma-M}
	
	for $\nu \in \mathcal{S}^{n-1}$ and $\tau \in \R^n$, let $x_0 \in\Pi(\nu, \tau)$ and let   $0<\e< d:= {\rm dist}(x_0, H_{0})$.
	\begin{itemize}
		\item[(a)] Suppose that $\nu$ is a rational direction. Then for any
		$x \in H_{d}$, there is $y \in H_{d}$ such that
		$$|x-y| \leq T_\nu \e ;\,\,\, y = x_0 \hbox{ mod } \e \Z^n$$
		where $T_\nu$ is the smallest positive number  such that $T_{\nu}\nu \in \Z^n$.

		\item[(b)] Suppose that $\nu$ is an irrational direction, and let $\omega_\nu: \N \to\R^+$ be defined as in \eqref{mode}.
		Then there exists a dimensional constant $M >0$ such that the
		following is true: for any $x \in H_{d}$ and $N \in \NN$, there is $y \in \R^n$
		such that
		$$|x-y| \leq MN\e +\e \omega_\nu(N)
		;\,\,\, y = x_0 \hbox{ mod } \e \Z^n$$
		and
	$$
		{\rm dist}(y, H_{d}) <
		\e \omega_{\nu}(N).
	$$
		Here note that $\omega_\nu (N)$ converges to $0$ as $N \to \infty$.
		
		\item[(c)] If $\nu$ is an irrational direction, then for any $z \in \R^n$ and
		$\delta>0$, there is $w \in H_{d}$  such that
		$$|z-w| \leq \delta \hbox{ mod }\e\Z^n.
		$$
	\end{itemize}
\end{lemma}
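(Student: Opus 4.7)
The three claims have rather different flavors and we treat them separately.

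For (a), the problem reduces to showing that the $(n-1)$-dimensional sublattice $L_\nu:=\Z^n\cap H_0$ is $T_\nu$-dense in $H_0$: given $x,x_0\in H_d$, we pick $k\in L_\nu$ with $|\e^{-1}(x-x_0)-k|\leq T_\nu$ and set $y:=x_0+\e k$, which lies in $H_d$ since $L_\nu\subset H_0$. The density claim reduces to an elementary covolume computation. Since $T_\nu\nu$ is the shortest nonzero lattice vector parallel to $\nu$, the orthogonal splitting gives $\mathrm{covol}(L_\nu)\cdot|T_\nu\nu|=[\Z^n:L_\nu\oplus\Z T_\nu\nu]$, and a direct calculation of this index yields $\mathrm{covol}(L_\nu)=T_\nu$ in $H_0$. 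Minkowski reduction then provides a basis of $L_\nu$ whose length is controlled by $T_\nu$ up to dimensional constants, bounding the covering radius and giving the claim after rescaling by $\e$.

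For (b), since $\nu$ is irrational the lattice $\Z^n\cap H_0$ is trivial and we must instead work with \emph{near-perpendicular} integer vectors. The plan is to construct, for each $N\in\N$, integer vectors $w_1,\ldots,w_{n-1}\in\Z^n$ with
$$
|w_r|\leq MN,\qquad |w_r\cdot\nu|\leq \omega_\nu(N),
$$
whose projections onto $H_0$ span a lattice of bounded covering radius. The construction exploits the definition of $m$ and the discrepancy $\omega_\nu$: with $i$ the index of the largest component of $\nu$ and $m$ the irrational number from \eqref{mode}, the bound $D_m(N)\leq\omega_\nu(N)$ supplies an integer $j\in\{1,\ldots,N\}$ for which $jm$ is within $\omega_\nu(N)$ of an integer; combining $j$-multiples of $(1,\ldots,1,m,1,\ldots,1)$ with integer shifts along $e_i$ (and analogous constructions using the other coordinate directions) yields the $w_r$. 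Given $x\in H_d$, we then expand $(x-x_0)/\e = \sum_r c_r w_r + \text{error}$ with integer coefficients $c_r$ and tangential error of size $O(1)$; the resulting $y:=x_0+\e\sum_r c_r w_r$ satisfies $|x-y|\leq MN\e+\e\omega_\nu(N)$ in the tangential direction and $|(y-x_0)\cdot\nu|\leq\e\omega_\nu(N)$ in the normal direction, giving the desired bounds.

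For (c), we apply (b) to the orthogonal projection of $z$ onto $H_d$, choosing $N$ so large that the error in (b) is below $\delta/2$. Then, using density of $\{j\nu\bmod\Z^n\}$ on the torus (Lemma~\ref{lem-weyl-1}) to adjust by a suitable element of $\e\Z^n$ in the $\nu$-direction, we close the remaining $\nu$-component gap within $\delta/2$, finishing the proof.

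The main technical obstacle is the simultaneous construction in (b): the vectors $w_1,\ldots,w_{n-1}$ must both have small $\nu$-component (a one-dimensional discrepancy condition) and span $H_0$ up to a controllable covering radius (an $(n-1)$-dimensional spanning condition). Coupling the one-dimensional equidistribution of $(jm)_j$ to the $(n-1)$-dimensional geometry of $H_0$ requires a careful combinatorial choice of auxiliary integer multiples of $e_i$, and it is here that the dimensional constant $M$ and the quantitative improvement over \cite{CKL} enter.
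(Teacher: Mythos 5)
Your proofs of (a) and (c) are broadly in the spirit of what is needed, though the paper's own treatment is terser. For (a), the covolume identity $\mathrm{covol}(L_\nu)=|T_\nu\nu|=T_\nu$ (for $T_\nu\nu$ a primitive lattice vector) is correct, but the passage from covolume to covering radius via Minkowski reduction in dimension $n-1$ yields a covering radius bounded by $c_n T_\nu$ for a dimensional constant $c_n$, not by $T_\nu$ itself; since the paper simply declares (a) ``immediate'' and elsewhere tracks only the $T_\nu$-dependence, this is likely acceptable, but you should flag the constant. For (c), closing the gap via (b) and equidistribution is fine and matches the paper's one-line deduction from its display (A.2).

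The proof of (b) has a genuine gap. You build $w_1,\dots,w_{n-1}\in\Z^n$ with $|w_r|\le MN$ and $|w_r\cdot\nu|\le\omega_\nu(N)$, expand $(x-x_0)/\e=\sum_r c_r w_r+\text{error}$ with integer $c_r$, and set $y=x_0+\e\sum_r c_r w_r$. The normal estimate you claim, $|(y-x_0)\cdot\nu|\le\e\,\omega_\nu(N)$, reads $\bigl|\sum_r c_r(w_r\cdot\nu)\bigr|\le\omega_\nu(N)$; from the pointwise bounds you only get $\bigl|\sum_r c_r(w_r\cdot\nu)\bigr|\le\bigl(\sum_r|c_r|\bigr)\,\omega_\nu(N)$, and the coefficients $c_r$ are not bounded --- indeed, when $x$ is far from $x_0$ along $H_d$, the $|c_r|$ must grow linearly in $|x-x_0|/(N\e)$, so the normal defect blows up. The same obstruction kills the claimed tangential bound once the error is not $O(1)$ but $O(MN)$. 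In short, a multi-vector integer expansion at the scale of $|x-x_0|$ cannot simultaneously keep the $\nu$-component of $\sum c_r w_r$ at size $\omega_\nu(N)$.

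The step your plan is missing is the paper's preliminary reduction modulo $\e\Z^n$: one first replaces $x_0$ by a representative $\tilde{x}_0\in[0,\e]^n$ of its class (and translates so that $x=0\in H_d$), thereby collapsing the tangential distance to $O(\e)$. Only \emph{then} does one approximate, and since the remaining correction is small one can afford a single one-parameter family $k\,\e(1,\dots,1,m)$, $0\le k\le N$, together with a slide $\tau\in H_d$ along $e_n$; the one-dimensional discrepancy $\omega_\nu(N)=D_m(N)$ then controls the residual $e_n$-offset exactly, with no factor of $\sum|c_r|$. This reduction step is precisely what keeps the coefficients bounded and makes the normal estimate close; without it, no amount of cleverness in choosing $w_1,\dots,w_{n-1}$ can recover the bound, because the defect scales with $|x-x_0|$.
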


\begin{proof}
	Proof of (a) is immediate from the fact that $T_{\nu}\nu \in \Z^n$.
		Next, we let $\nu$ be an irrational direction in
	$S^{n-1}$. Without loss of generality, we may assume
	$$|\nu_n|=\max\{|\nu_j|: 1\leq j \leq n\}.$$
	Let $x$ be any point on $H_{d}$: after a translation and rotation around the $x_n$-axis, we may assume
	that $x=0 \in H_{d}$ and $H_{d} \cap [0,1]^n \neq \emptyset$. Choose $m$ such that
	$$(1,1,..,1, m) \in H_{d}.$$ Note that $|(1,1,..,1,m)|\leq M$ for a dimensional constant $M>0$, since $|\nu_n|$ is the largest. Also note that
	$$k\e(1,1,..,1, m) \in H_{d}
	\hbox{ for any integer }k$$
	since $H_{d}$ contains the origin.
	
	Consider the sequence $(km)_k$, then from the definition of
	$\omega_\nu(N)$ and the discrepancy function $D_m(N)$, it follows
	that any interval $[a,b] \subset [0,1]$ of length $\omega_\nu(N)$
	contains at least one point $km $ (mod $1$), for some $k \leq
	N$.
	Hence for any
	$$z=(0,0,...,0,z_n) \hbox{ with }  0 \leq z_n \leq \e$$ there exists $$w=k\e(1,1,..,1, m)
	\in H_{d} $$  for some $0\leq k \leq N$ such that
	$$|z-w| \leq \e\omega_\nu(N) \,\, \hbox{  mod }\e\Z^n.$$
	
	Similarly, for any $z \in
	[0,\e]^n$, there exists $\tau \in H_{d} \cap[0,\e]^n $ such that $\tau = z + \alpha e_n$ with $|\alpha|\leq \e$. Then by the above argument, we can find  
	$$w = k\e(1,1,...,1,m) + \tau
	\in H_{d} $$ for some $0 \leq k \leq N$  such that
	\begin{equation}\label{zw}
	|z-w| \leq \e\omega_\nu(N)\hbox{ mod }\e\Z^n. 
	\end{equation}
	
	Now let $\tilde{x}_0$ be a point in $[0,\e]^n$ with $x_0 = \tilde{x}_0$ mod $\e\Z^n$, and we apply the above argument for $z = \tilde{x}_0$. Then we can find  $\tau \in H_{d} \cap[0,\e]^n $  and  
	$$w  = k\e(1,1,...,1,m) + \tau 
	\in H_{d} $$ with some $0 \leq k \leq N$  such that
	\begin{equation}\label{zw1}
	|x_0- w| = |\tilde{x}_0 -w| \leq \e\omega_\nu(N)\hbox{ mod }\e\Z^n. 
	\end{equation}

	On the other hand, recall that the coordinates are
	shifted so that $x=0$. Thus it suffices to find $y \in \R^n$ such
	that
	$$|x-y|=|y| \leq
	M N \e;\,\,\, y = x_0  \hbox{ mod }\e \Z^n$$ and $$ {\rm
		dist}(y, H_{d}) < \e\omega_{\nu}(N).$$
	By (\ref{zw1}), there exists $w \in H_{d}$ such that
	
	\begin{equation} \label{zzww}
	|x_0- w| \leq \e\omega_{\nu}(N) \,\, \hbox{ mod } \,\,
	\e\Z^n
	\end{equation}
	and
	\begin{equation}\label{zzw}
	|x- w| =|w| \leq MN \e
	\end{equation}
	where the last inequality follows from $|(1,1,..,1,m)| \leq M$ and $0 \leq k \leq N$.
	Given $w$ satisfying  (\ref{zzww}), we can take $y\in\R^n$ such that
	\begin{equation} \label{17}
	|x_0-y|=0\hbox{ mod }\e\Z^n,\hbox{ and }|y- w| \leq
	\e\omega_{\nu}(N).
	\end{equation}
	Then  by (\ref{zzw}) and (\ref{17})
	$$
	|x-y| = |y| \leq |y- w| +|w| \leq \e\omega_{\nu}(N) + MN \e.
	$$
	Also since  $w$ is contained in $  H_{d}$,
	$$
	{\rm dist}(y, H_{d}) \leq |y-w| \leq \e\omega_\nu(N).
	$$
	
	(c)  is a direct consequence of \eqref{zw} since $\omega_\nu(N) \to 0$ as $N \to \infty$.
\end{proof}

Next we state a version of Dirichlet's approximation theorem, whose proof is based on pigeon-hole principle. 
\begin{lemma} [Lemma 2.11 in  \cite{FK}] \label{FK}
	For $\alpha_1$,..,$\alpha_n \in \R$ and $N\in \N$, there are integers $p_1$,..., $p_n$, $q \in \Z$ with $1 \leq q \leq N$ such that   
	$$
	|q \alpha_i - p_i| \leq N^{-1/n} .
	$$
\end{lemma}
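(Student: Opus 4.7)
The plan is to prove this by a direct application of the pigeonhole principle, in the spirit of the classical Dirichlet simultaneous approximation theorem. I would consider the $N+1$ points
$$
\Phi(q) := \bigl(\{q\alpha_1\}, \{q\alpha_2\}, \ldots, \{q\alpha_n\}\bigr) \in [0,1)^n, \qquad q = 0, 1, \ldots, N,
$$
where $\{\cdot\}$ denotes the fractional part, and partition the unit cube $[0,1)^n$ into $K^n$ sub-cubes of side-length $1/K$, with $K$ to be chosen so that $K^n \le N$.

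First, since the number of sub-cubes is at most $N$, while there are $N+1$ points $\Phi(0), \ldots, \Phi(N)$, the pigeonhole principle produces two indices $0 \le q_1 < q_2 \le N$ with $\Phi(q_1)$ and $\Phi(q_2)$ belonging to the same sub-cube. Setting $q := q_2 - q_1 \in \{1, \ldots, N\}$ and $p_i := \lfloor q_2 \alpha_i\rfloor - \lfloor q_1 \alpha_i \rfloor$, the fractional-part identity gives
$$
q\alpha_i - p_i \;=\; \{q_2\alpha_i\} - \{q_1 \alpha_i\},
$$
so that $|q\alpha_i - p_i| \le 1/K$ for every $i = 1,\ldots, n$.

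Next, I would choose $K := \lfloor N^{1/n}\rfloor$, which satisfies $K^n \le N$ (so the pigeonhole step is legitimate) and $K \ge N^{1/n}/C_n$ for a dimensional constant, giving the desired bound $|q\alpha_i - p_i| \le N^{-1/n}$ up to a constant factor that can be absorbed (or, if one wants the clean bound exactly as stated, one runs the argument with $K$ slightly larger and uses $N+1$ in place of $N$; since the lemma is only applied through the estimate \eqref{lastt} on $T_\nu(s)$, this constant does not affect the rest of the paper).

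The argument is essentially mechanical, and the only mild subtlety is the book-keeping for the constant in the relationship between $K$ and $N^{1/n}$; the real content is just the counting step. No deeper obstacle is anticipated.
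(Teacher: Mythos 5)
The paper does not actually prove this lemma: it simply cites Lemma 2.11 of Feldman--Kim \cite{FK} and remarks that the proof is ``based on pigeon-hole principle,'' so there is no in-paper proof to compare against. Your pigeonhole argument is the natural reading of that remark and it is essentially correct in substance, but it does not give the stated constant, and the ``fix'' you sketch in the last paragraph does not actually repair it.

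Concretely: with $K=\lfloor N^{1/n}\rfloor$ your argument produces $q\in\{1,\dots,N\}$ and $p_i$ with
$$
|q\alpha_i-p_i|<\frac{1}{\lfloor N^{1/n}\rfloor},
$$
and since $\lfloor N^{1/n}\rfloor\le N^{1/n}$, the right-hand side is $\ge N^{-1/n}$, i.e.\ \emph{weaker} than the bound claimed. Your suggested repair --- take ``$K$ slightly larger and $N+1$ in place of $N$'' --- does not work: to have $1/K\le N^{-1/n}$ one must take $K\ge N^{1/n}$, hence $K=\lceil N^{1/n}\rceil$ if $N$ is not a perfect $n$-th power, but then $K^n>N$, so $K^n>N+1$ as soon as $N\ge 2^n$, say, and there is no longer a collision among the $N+1$ points $\Phi(0),\dots,\Phi(N)$. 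An axis-aligned subdivision of $[0,1)^n$ into at most $N$ boxes of $L^\infty$-diameter at most $N^{-1/n}$ requires each factor to have $\ge\lceil N^{1/n}\rceil$ pieces, giving $\ge\lceil N^{1/n}\rceil^n>N$ boxes in general, so the obstruction is real, not a book-keeping artifact. The clean statement with constant exactly $N^{-1/n}$ is a Minkowski-type result: apply Minkowski's convex body theorem (compact case, volume $\ge 2^{n+1}$) to the symmetric convex body $\{(q,p)\in\R^{n+1}: |q|\le N,\ |q\alpha_i-p_i|\le N^{-1/n}\ \forall i\}$, which has volume exactly $2^{n+1}$ since the shear $(q,p)\mapsto(q,\,q\alpha_1-p_1,\dots,q\alpha_n-p_n)$ is unimodular; a nonzero lattice point in it necessarily has $q\ne 0$ and gives the lemma. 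That said, you are right that the distinction is immaterial here: the only use of the lemma is \eqref{lastt}, $T_\nu(s)\le\sqrt{n}\,s^{-(n-1)}$, which already absorbs dimensional constants, and your bound $1/\lfloor N^{1/n}\rfloor\le 2N^{-1/n}$ (for $N\ge 2^n$; the case $N<2^n$ is trivial since then $N^{-1/n}\ge 1/2$ and $q=1$ works) is entirely adequate. So: your route is fine for the paper, but you should either state the weaker constant honestly or replace the last step by the Minkowski argument.
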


\end{document}